\tikzset{math3d/.style=
    {x= {(-0.353cm,-0.353cm)}, z={(0cm,1cm)},y={(1cm,0cm)}}}
\tikzset{JLL3d/.style=
    {x= {(0.4cm,-0.2cm)}, z={(0cm,1cm)},y={(-1cm,0cm)}}}
\tikzset{J4/.style=
   {x= {(-0.87cm,-0.5cm)}, z={(0cm,1cm)},y={(0.87cm,-0.5cm)}}}
\definecolor{Chocolat}{rgb}{0.36, 0.2, 0.09}
\definecolor{BleuTresFonce}{rgb}{0.215, 0.215, 0.36}
\definecolor{RougeTresFonce}{rgb}{0.36, 0.215, 0.215}
\newtheorem{definition}{Definition}[section]
\newtheorem{proposition}[definition]{Proposition}
\newtheorem{lemma}[definition]{Lemma}
\newtheorem{theorem}{Theorem}
\newtheorem*{problem}{Problem}
\theoremstyle{remark}
\newtheorem{example}[definition]{\sc Example}
\newtheorem{remark}[definition]{\sc Remark}
\newcommand{\RR}{\mathbb{R}}
\newcommand{\K}{\mathrm{K}}
\newcommand{\J}{\mathrm{J}}
\newcommand{\La}{\mathcal{L}}
\newcommand{\Tam}[1]{\mathrm{PBT}_{#1}}
\newcommand{\CT}[1]{\mathrm{CT}_{#1}}
\newcommand{\CMT}[1]{\mathrm{CAT}_{#1}}
\newcommand{\PolySub}{\mathsf{Poly}}
\DeclareMathOperator{\tp}{top}
\DeclareMathOperator{\bm}{bot}
\DeclareMathOperator{\conv}{conv}
\newcommand{\tr}{\mathrm{tr}}
\newcommand{\id}{\mathrm{id}}
\newcommand{\B}{\mathrm{B}}
\newcommand{\T}{\mathrm{T}}
\newcommand{\iBT}{\mathrm{i}^\T_\B}
\newcommand{\cat}[1]{\mathcal{#1}}
\newcommand{\gra}{\ell}
\newcommand{\Ainf}{\ensuremath{\mathrm{A}_\infty}}
\newcommand{\Linf}{\ensuremath{\mathrm{L}_\infty}}
\newcommand{\Ainfdeux}{\ensuremath{\mathrm{A}_\infty^2}}
\newcommand{\uAinf}{\ensuremath{\mathrm{uA}_\infty}}
\newcommand{\AAinf}{\mathrm{AA}_\infty} 
\newcommand{\Minf}{\ensuremath{\mathrm{M}_\infty}} 
\newcommand{\Aalg}{\ensuremath{\mathrm{A}_\infty\text{-\hspace{2pt}}\mathsf{alg}}} 
\newcommand{\Acog}{\ensuremath{\mathrm{A}_\infty\text{-\hspace{2pt}}\mathsf{cog}}} 
\newcommand{\infAalg}{\ensuremath{\infty\text{-}\mathrm{A}_\infty\text{-\hspace{1pt}}\mathsf{alg}}} 
\newcommand{\infAcog}{\ensuremath{\infty\text{-}\mathrm{A}_\infty\text{-}\mathsf{cog}}} 
\newcommand{\ide}{\ensuremath{\mathrm{id}}}
\newcommand{\mono}{\ensuremath{\mathrm{mono}}}
\newcommand{\comp}{\ensuremath{\mathrm{comp}}}
\newcommand{\diagainf}{\ensuremath{\triangle^{K}}}
\newcommand{\diagminf}{\ensuremath{\triangle^{J}}}
\newcommand{\OEIS}[1]{{\rm \href{http://oeis.org/#1}{\texttt{#1}}}}
\newcommand{\black}[1]{\boldsymbol{\left(\right.} #1 \boldsymbol{\left.\right)}}
\newcommand{\blue}[1]{\textcolor{MidnightBlue}{\boldsymbol{\left(\right.}} #1 \textcolor{MidnightBlue}{\boldsymbol{\left.\right)}}} 
\newcommand{\red}[1]{\textcolor{Red!60}{\boldsymbol{\left(\right.}} #1 \textcolor{Red!60}{\boldsymbol{\left.\right)}}} 
\newcommand{\purple}[1]{\textcolor{Purple!80}{\boldsymbol{\left[\right.}} #1 \textcolor{Purple!80}{\boldsymbol{\left.\right]}}}
\newcommand{\pointbullet}{
\begin{tikzpicture}[very thick]
\draw[fill,black] (0,0) circle (0.5) ;
\end{tikzpicture}}
\newcommand{\TreeLa}
{\vcenter{\hbox{
\begin{tikzpicture}[scale=0.25]
\draw[very thick, MidnightBlue] (0,-0.5)--(0,0) -- (-2,2)--(-2,2.5);
\draw[very thick, MidnightBlue] (-1,1)--(0,2)--(0,2.5) ;
\draw[very thick, MidnightBlue] (0,0)--(1,1)--(1,2.5) ;
\draw[very thick, Red!60] (0,-0.5)--(0,-1) ;
\draw (-0.5,-0.5) --(0.5, -0.5);
\end{tikzpicture}}}}
\newcommand{\TreeLb}
{\vcenter{\hbox{
\begin{tikzpicture}[scale=0.25]
\draw[very thick, MidnightBlue] (-0.5,0.5)--(-2,2)--(-2,2.5);
\draw[very thick, MidnightBlue] (-1,1)--(0,2)--(0,2.5) ;
\draw[very thick, MidnightBlue] (0.5,0.5)--(1,1)--(1,2.5) ;
\draw[very thick, Red!60] (0,0)--(0,-0.5) ;
\draw[very thick, Red!60] (0,0)--(-0.5, 0.5) ;
\draw[very thick, Red!60] (0,0)--(0.5,0.5) ;
\draw (-1,0.5) --(1, 0.5);
\end{tikzpicture}}}}
\newcommand{\TreeLc}
{\vcenter{\hbox{
\begin{tikzpicture}[scale=0.25]
\draw[very thick, MidnightBlue] (-2, 2.5)--(-2, 3) ;
\draw[very thick, MidnightBlue] (0, 2.5)--(0, 3) ;
\draw[very thick, MidnightBlue] (1, 2.5)--(1, 3) ;
\draw[very thick, Red!60] (0,-0.5)--(0,0) -- (-2,2)--(-2,2.5);
\draw[very thick, Red!60] (-1,1)--(0,2)--(0,2.5) ;
\draw[very thick, Red!60] (0,0)--(1,1)--(1,2.5) ;
\draw (-2.5,2.5) --(1.5, 2.5);
\end{tikzpicture}}}}
\newcommand{\TreeRa}
{\vcenter{\hbox{
\begin{tikzpicture}[scale=0.25]
\draw[very thick, MidnightBlue] (0,-0.5)--(0,0) -- (2,2)--(2,2.5);
\draw[very thick, MidnightBlue] (1,1)--(0,2)--(0,2.5) ;
\draw[very thick, MidnightBlue] (0,0)--(-1,1)--(-1,2.5) ;
\draw[very thick, Red!60] (0,-0.5)--(0,-1) ;
\draw (-0.5,-0.5) --(0.5, -0.5);
\end{tikzpicture}}}}
\newcommand{\TreeRb}
{\vcenter{\hbox{
\begin{tikzpicture}[scale=0.25]
\draw[very thick, MidnightBlue] (0.5,0.5) -- (2,2)--(2,2.5);
\draw[very thick, MidnightBlue] (1,1)--(0,2)--(0,2.5) ;
\draw[very thick, MidnightBlue] (-0.5,0.5)--(-1,1)--(-1,2.5) ;
\draw[very thick, Red!60] (0,-0.5)--(0,0) ;
\draw[very thick, Red!60] (0,0)--(0.5,0.5) ;
\draw[very thick, Red!60] (0,0)--(-0.5,0.5) ;
\draw (-1,0.5) --(1, 0.5);
\end{tikzpicture}}}}
\newcommand{\TreeRc}
{\vcenter{\hbox{
\begin{tikzpicture}[scale=0.25]
\draw[very thick, Red!60] (0,-0.5)--(0,0) -- (2,2)--(2,2.5);
\draw[very thick, Red!60] (1,1)--(0,2)--(0,2.5) ;
\draw[very thick, Red!60] (0,0)--(-1,1)--(-1,2.5) ;
\draw[very thick, MidnightBlue] (2,2.5)--(2,3) ;
\draw[very thick, MidnightBlue] (0,2.5)--(0,3) ;
\draw[very thick, MidnightBlue] (-1,2.5)--(-1,3) ;
\draw (-1.5,2.5) --(2.5, 2.5);
\end{tikzpicture}}}}
\newcommand{\TreeLab}
{\vcenter{\hbox{
\begin{tikzpicture}[scale=0.25]
\draw[very thick, MidnightBlue] (0,0) -- (-2,2)--(-2,2.5);
\draw[very thick, MidnightBlue] (-1,1)--(0,2)--(0,2.5) ;
\draw[very thick, MidnightBlue] (0,0)--(1,1)--(1,2.5) ;
\draw[very thick, Red!60] (0,0)--(0,-0.5) ;
\draw (-0.5,0) --(0.5, 0);
\end{tikzpicture}}}}
\newcommand{\TreeLbc}
{\vcenter{\hbox{
\begin{tikzpicture}[scale=0.25]
\draw[very thick, MidnightBlue] (-1,1) -- (-2,2)--(-2,2.5);
\draw[very thick, MidnightBlue] (-1,1)--(0,2)--(0,2.5) ;
\draw[very thick, MidnightBlue] (1,1)--(1,2.5) ;
\draw[very thick, Red!60] (0,-0.5)--(0,0) ;
\draw[very thick, Red!60] (0,0)--(-1,1) ;
\draw[very thick, Red!60] (0,0)--(1,1) ;
\draw (-1.5,1) --(1.5, 1);
\end{tikzpicture}}}}
\newcommand{\TreeRab}
{\vcenter{\hbox{
\begin{tikzpicture}[scale=0.25]
\draw[very thick, MidnightBlue] (0,0) -- (2,2)--(2,2.5);
\draw[very thick, MidnightBlue] (1,1)--(0,2)--(0,2.5) ;
\draw[very thick, MidnightBlue] (0,0)--(-1,1)--(-1,2.5) ;
\draw[very thick, Red!60] (0,-0.5)--(0,0) ;
\draw (-0.5,0) --(0.5, 0);
\end{tikzpicture}}}}
\newcommand{\TreeRbc}
{\vcenter{\hbox{
\begin{tikzpicture}[scale=0.25]
\draw[very thick, MidnightBlue] (1,1) -- (2,2)--(2,2.5);
\draw[very thick, MidnightBlue] (1,1)--(0,2)--(0,2.5) ;
\draw[very thick, MidnightBlue] (-1,1)--(-1,2.5) ;
\draw[very thick, Red!60] (0,-0.5)--(0,0) ;
\draw[very thick, Red!60] (0,0)--(1,1) ;
\draw[very thick, Red!60] (0,0)--(-1,1) ;
\draw (-1.5,1) --(1.5, 1);
\end{tikzpicture}}}}
\newcommand{\TreeCa}
{\vcenter{\hbox{
\begin{tikzpicture}[scale=0.25]
\draw[very thick, MidnightBlue] (0,-0.5) -- (0,1.5);
\draw[very thick, MidnightBlue] (0,0) -- (-1,1)--(-1,1.5);
\draw[very thick, MidnightBlue] (0,0) -- (1,1)--(1,1.5);
\draw[very thick, Red!60] (0,-0.5)--(0,-1) ;
\draw (-0.5,-0.5) --(0.5, -0.5);
\end{tikzpicture}}}}
\newcommand{\TreeCb}
{\vcenter{\hbox{
\begin{tikzpicture}[scale=0.25]
\draw[very thick, Red!60] (0,-0.5) -- (0,1.5);
\draw[very thick, Red!60] (0,0) -- (-1,1)--(-1,1.5);
\draw[very thick, Red!60] (0,0) -- (1,1)--(1,1.5);
\draw[very thick, MidnightBlue] (-1,1.5)--(-1,2) ;
\draw[very thick, MidnightBlue] (0,1.5)--(0,2) ;
\draw[very thick, MidnightBlue] (1,1.5)--(1,2) ;
\draw (-1.5,1.5) --(1.5, 1.5);
\end{tikzpicture}}}}
\newcommand{\TreeCab}
{\vcenter{\hbox{
\begin{tikzpicture}[scale=0.25]
\draw[very thick, MidnightBlue] (0,0) -- (0,1.5);
\draw[very thick, MidnightBlue] (0,0) -- (-1,1)--(-1,1.5);
\draw[very thick, MidnightBlue] (0,0) -- (1,1)--(1,1.5);
\draw[very thick, Red!60] (0,-0.5)--(0,0) ;
\draw (-0.5,0) --(0.5, 0);
\end{tikzpicture}}}}
\newcommand{\TreeIab}
{\vcenter{\hbox{
\begin{tikzpicture}[scale=0.35]
\draw[very thick, MidnightBlue] (0,0.5)--(0,0);
\draw[very thick, Red!60] (0,-0.5)--(0,0) ;
\draw (-0.25,0) --(0.25, 0);
\end{tikzpicture}}}}
\newcommand{\TreeBa}
{\vcenter{\hbox{
\begin{tikzpicture}[scale=0.25]
\draw[very thick, MidnightBlue] (0,0.5)--(0,1);
\draw[very thick, MidnightBlue] (0,1)--(-0.5,1.5)--(-0.5,2);
\draw[very thick, MidnightBlue] (0,1)--(0.5,1.5)--(0.5,2);
\draw[very thick, Red!60] (0,0)--(0,0.5);
\draw (-0.5,0.5) --(0.5, 0.5);\end{tikzpicture}}}}
\newcommand{\TreeBb}
{\vcenter{\hbox{
\begin{tikzpicture}[scale=0.25]
\draw[very thick, MidnightBlue] (-0.5,2)--(-0.5,2.5);
\draw[very thick, MidnightBlue] (0.5,2)--(0.5,2.5);
\draw[very thick, Red!60] (0,0.5)--(0,1);
\draw[very thick, Red!60] (0,1)--(-0.5,1.5)--(-0.5,2);
\draw[very thick, Red!60] (0,1)--(0.5,1.5)--(0.5,2);
\draw (-1,2) --(1, 2);
\end{tikzpicture}}}}
\newlength\myheight
\newlength\mydepth
\settototalheight\myheight{Xygp}
\newcommand*\inlinegraphics[1]{%
  \settototalheight\myheight{Xygp}%
  \settodepth\mydepth{Xygp}%
  \raisebox{-\mydepth}{\includegraphics[height=\myheight]{#1}}%
}
\DeclareMathOperator{\Ima}{Im} 
\DeclareMathOperator{\cone}{Cone} 
\title[The diagonal of the multiplihedra]{The diagonal of the multiplihedra and \\ the tensor product of $\Ainf$-morphisms}
\author{Guillaume Laplante-Anfossi}
\address{School of Mathematics and Statistics, University of Melbourne, Victoria, Australia}
\email{guillaume.laplanteanfossi@unimelb.edu.au}
\author{Thibaut Mazuir}
\address{Institut für Mathematik, Humboldt Universität zu Berlin, Germany.}
\email{thibaut.mazuir@hu-berlin.de}
 \date{\today}
 \subjclass[2010]{Primary 52B11, 18M70}
 \keywords{Multiplihedra, approximation of the diagonal, associahedra, operads, tensor product, A-infinity algebras, A-infinity morphisms, A-infinity categories.}
 \thanks{The first author was supported by the European Union's Horizon 2020 research and innovation program under the Marie Sklodowska-Curie grant agreement No 754362 \inlinegraphics{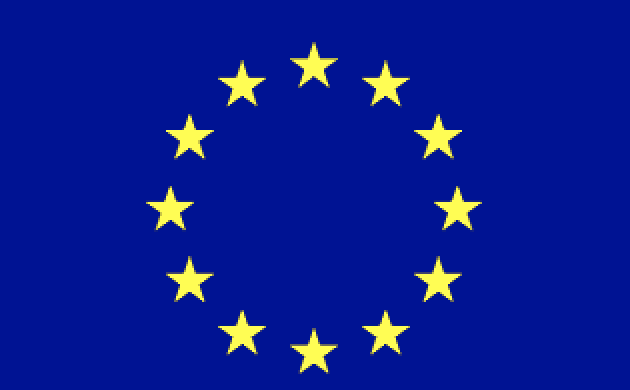}, by the Natural Sciences and Engineering Research Council of Canada (NSERC) and by the ANR-20-CE40-0016 (Higher Algebra, Geometry and Topology). The second author was supported by the ANR-21-CE40-0002 (New challenges in Symplectic and Contact Topology).}
\begin{document}

\begin{abstract}
We define a cellular approximation for the diagonal of the Forcey--Loday realizations of the multiplihedra, and endow them with a compatible topological cellular operadic bimodule structure over the Loday realizations of the associahedra. 
This provides us with a model for topological and algebraic \Ainf -morphisms, as well as a universal and explicit formula for their tensor product.
We study the monoidal properties of this newly defined tensor product and conclude by outlining several applications, notably in algebraic and symplectic topology.
\end{abstract}

\maketitle

\begin{figure}[h!]
\centering
\includegraphics[width=0.6\linewidth]{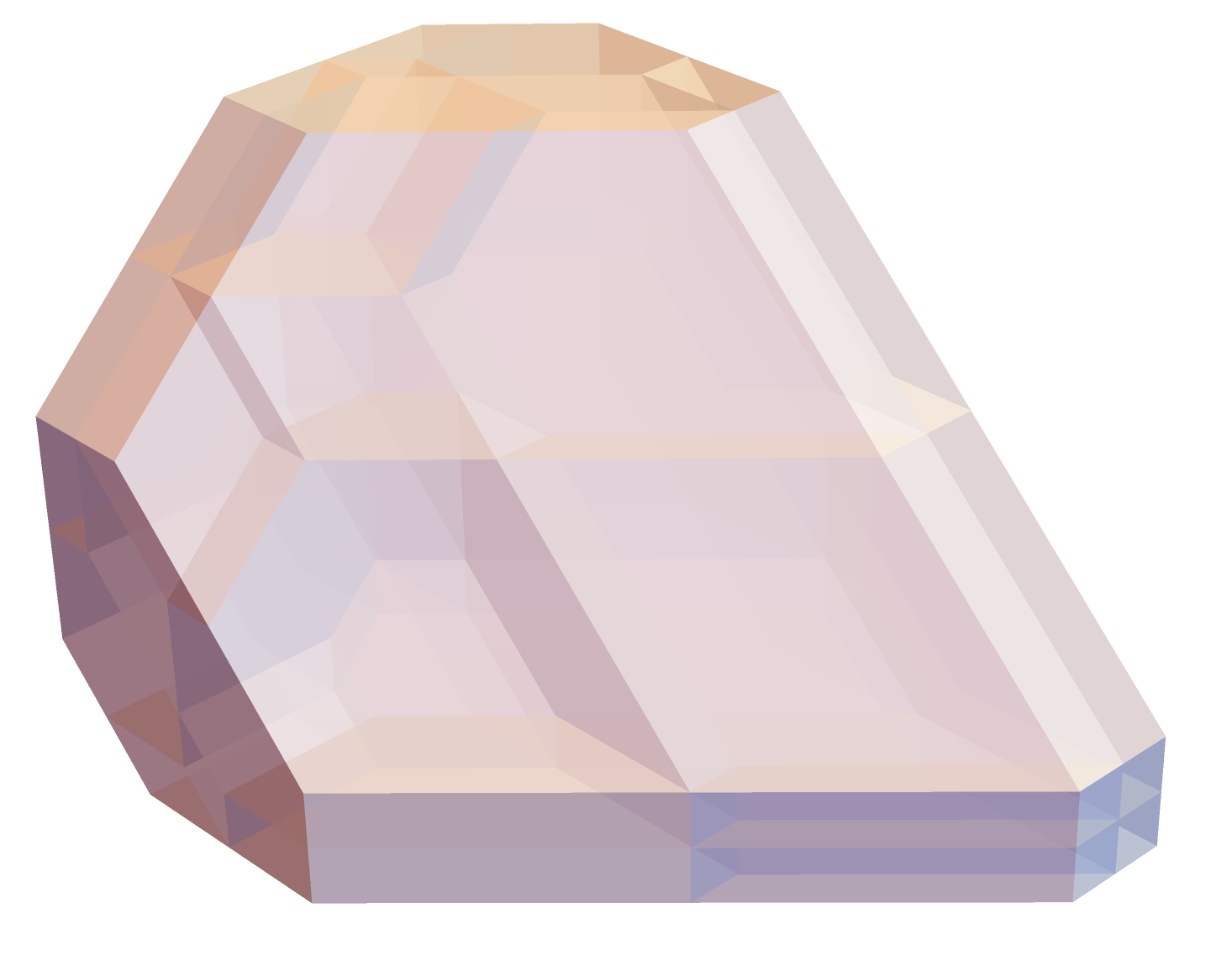} 
\label{Fig5:J4}
\end{figure}

\newpage

\setcounter{tocdepth}{1}
\tableofcontents


\section*{Introduction}

The $n$-dimensional associahedron, a polytope whose faces are in bijection with planar trees with $n+2$ leaves, was first introduced as a topological cell complex by J. Stasheff to describe algebras whose product is associative up to homotopy \cite{Stasheff63}.
The problem of giving polytopal realizations of these CW-complexes has a rich history \cite{CeballosZiegler12}, and the algebras that they encode, called $\mathrm{A}_\infty$-algebras, have been extensively studied in various branches of mathematics. They were used in algebraic topology for the study of iterated loop spaces \cite{May72,BoardmanVogt73} or the study of homotopy theory of differential graded associative algebras \cite{LefevreHasegawa03,Vallette14} ; in symplectic topology to define Fukaya categories of symplectic manifolds \cite{Seidel08,fo3-I,fo3-II}, through the interpretation of the associahedra as moduli spaces of disks with marked boundary points; and more recently, in mathematical physics, mirror symmetry, Galois cohomology or non-commutative probability.

The $n$-dimensional multiplihedron is a polytope whose faces are in bijection with 2-colored planar trees with $n+1$ leaves. It was first introduced as a topological cell complex by J. Stasheff to describe morphisms between $\mathrm{A}_\infty$-algebras \cite{Stasheff70}.
It was only recently realized as a convex polytope in the work of S. Forcey \cite{Forcey08}, followed by the work of S. Forcey and S. Devadoss \cite{DevadossForcey08}, F. Ardila and J. Doker \cite{AD13}, and F. Chapoton and V. Pilaud \cite{CP22}.
The multiplihedra were studied in algebraic topology \cite{BoardmanVogt73}, as well as in symplectic topology \cite{MauWoodward10,mau-wehrheim-woodward} and Morse theory \cite{mazuir-I,mazuir-II}, as they can be respectively realized as moduli spaces of quilted disks with marked boundary points and as moduli spaces of 2-colored metric trees. 

In this paper, we define and study a cellular approximation of the diagonal of the multiplihedra. 
The need for such an approximation comes from the fact that the standard thin diagonal $\triangle_P:P\to P\times P, x\mapsto (x,x)$ of a polytope $P$ is not cellular in general, i.e. its image is not a union of faces of $P\times P$. 
A cellular approximation of the diagonal is a cellular map $\triangle_P^{\textrm{cell}} : P \to P\times P$ which is homotopic to $\triangle_P$ and which agrees with $\triangle_P$ on the vertices of $P$.

The Alexander--Whitney map \cite{EilenbergMacLane53} and the Serre diagonal \cite{Serre51} respectively define cellular approximations for the diagonal of the simplices and for the diagonal of the cubes, yielding the cup product in singular cohomology and the cup product in cubical cohomology.
A cellular approximation for the diagonal of the associahedra was first defined at the level of cellular chains in \cite{SaneblidzeUmble04}. 
A second construction was given in \cite{MarklShnider06}, and was recently shown to coincide with the first one \cite{saneblidzeComparingDiagonalsAssociahedra2022a}. 
A topological map was given for the first time in \cite{MTTV19} and was shown to recover the previous constructions at the cellular level. 
Such a diagonal yields a universal formula for the tensor product of two $\Ainf$-algebras.
By the term \textit{universal}, we mean that the same formula applies uniformly to any pair of $\Ainf$-algebras.
In a similar fashion, a cellular approximation of the diagonal of the multiplihedra will be used to define a universal tensor product of $\Ainf$-morphisms in this paper.
Such a map was given at the level of chains in \cite{SaneblidzeUmble04}. We provide the first topological map, which is moreover distinct from the one of \cite{SaneblidzeUmble04} at the cellular level, see \cref{ss:about} for more details on this point.
Our main results can be summarized as follows.
\begin{enumerate}
  \item We define a cellular approximation of the diagonal on Forcey--Loday realizations of the multiplihedra (\cref{def:diagonal-multipl-forcey-loday}).
  \item We endow them with a compatible operadic bimodule structure over the Loday realizations of the associahedra (\cref{thm:MainOperad}).
  \item We compute explicitly the associated combinatorial formula for the cellular image of the diagonal (\cref{thm:formuladiagonal}).
  \item We apply the cellular chains functor to the diagonal in order to define a universal tensor product of $\mathrm{A}_\infty$-morphisms (\cref{prop:diagonal-polytopale-m-infini}), and we study its properties (\cref{ss:homotopy-properties}).
\end{enumerate}

To achieve these goals, we use the theory of cellular approximations of diagonals developed by the first author in \cite{LA21}, which is based on the theory of fiber polytopes of \cite{BilleraSturmfels92} and the method introduced in \cite{MTTV19}.
We prove that the Forcey--Loday realizations of the multiplihedra \cite{Forcey08} can be obtained from the Ardila--Doker realization of the multiplihedra \cite{AD13} by projection (\cref{prop:lifting}).
These last realizations are generalized permutahedra, in the sense of A. Postnikov \cite{Postnikov09}, which allows us to apply the results of \cite{LA21} directly, both to define a cellular approximation of the diagonal and to describe its cellular image combinatorially.

The tensor product of $\Ainf$-morphisms defined by this diagonal does not however define a symmetric monoidal structure on the category $\infAalg$ of $\Ainf$-algebras and their $\Ainf$-morphisms, since it is not strictly compatible with the composition. 
This is not a defect of our construction: in \cref{thm:nofunctorial}, we prove that there is no tensor product of $\Ainf$-morphisms which is strictly compatible with the composition of $\Ainf$-morphisms. 
This proposition should be compared to a similar result by M. Markl and S. Shnider, saying that there is no strictly associative tensor product of $\Ainf$-algebras \cite[Theorem 13]{MarklShnider06}.
The preceding two properties are in fact always satisfied up to homotopy (see \cref{th:homotopy-properties}), which points towards the idea that the category $\infAalg$ should possess some kind of \textit{homotopy} symmetric monoidal structure. 
An analogous phenomenon was already observed for the category of homotopy representations of an algebraic group \cite{AriasAbadCrainicDherin11,poliakova2020cellular}.

Our results can be readily applied to different fields. 
The operadic bimodule structure of Point~(2) above was used in the work of the second author, in order to realize $\mathrm{A}_\infty$-algebras and $\mathrm{A}_\infty$-morphisms in Morse theory \cite{mazuir-I,mazuir-II}. 
The algebraic tensor product in Point~(4) has applications in Heegaard Floer homology and could be used to relate the Fukaya categories of products of symplectic manifolds via Lagrangian correspondences, see \cref{ss:diag-symp}.
We also expect future applications of our work to the computation of the homology of fibered spaces, using the construction of the convolution $\Ainf$-algebra associated to an $\Ainf$-coalgebra and an $\Ainf$-algebra in \cref{prop:convolution-ainf}.
This last construction can also be related to the deformation theory of $\infty$-morphisms developed in \cite{RobertNicoudWierstraI,RobertNicoudWierstraII}, see \cref{sec:RNW}.
Moreover, our geometric methods shed a new light on a result of M. Markl and S. Shnider \cite{MarklShnider06}, pointing towards possible links with discrete and continuous Morse theory (\cref{rem:Morse}). 


Finally, the results of this paper can be straightforwardly extended to the "multiploperahedra", a family of polytopes which is to the operahedra of \cite{LA21} what the multiplihedra are to the associahedra. 
They belong at the same time to the families of graph-multiplihedra \cite{DevadossForcey08} and of nestomultiplihedra \cite{AD13}.  
Together with the results of \cite[Section 4]{LA21}, one would obtain a tensor product of $\infty$-morphisms between homotopy operads, defined by explicit formul\ae. 

\subsection*{Layout} 
We introduce the Forcey--Loday and the Ardila-Doker realizations of the multiplihedra in \cref{sec:I}. 
We define a cellular approximation of their diagonal and endow the Forcey--Loday multiplihedra with an operadic bimodule structure over the Loday associahedra in \cref{sec:II}.
We compute explicitly the associated combinatorial formula for the image of our diagonal in \cref{sec:III}. 
We define a tensor product of \Ainf -algebras and of \Ainf -morphisms and study its properties in \cref{sec:IV}.
We finally sketch future applications of our work in \cref{sec:V}.

\subsection*{Conventions} 
We use the conventions and notations of \cite{Ziegler95} for convex polytopes and the ones of \cite{LodayVallette12} for operads. 
The word operad will always mean non-symmetric operad \cite[Section 5.2.8]{LodayVallette12} in this paper. 
We denote by $[n]\coloneqq \{1,\ldots,n\}$ and by $\{ e_i\}_{i \in [n]}$ the standard basis of $\RR^n$.
The abbreviation "dg" will stand for the words "differential graded". 

\subsection*{Acknowledgements} 
We would like to thank Bruno Vallette for numerous discussions and for his careful reading of our paper, as well as Alexandru Oancea and Eric Hoffbeck for their comments on earlier versions.
We are also indebted to Lino Amorim and Robert Lipshitz, for explaining to us their work and for their detailed insights on possible applications of our results in symplectic topology. 
We express our gratitude to Sushmita Venugopalan, for taking the time to discuss potential connections between our work and results on toric varieties, and to Daniel Robert-Nicoud, for discussing his work with us and suggesting new directions of research. 
Finally, we would like to thank the anonymous referee whose detailed comments and suggestions helped improve the exposition.
This work was partially written at Institut Mittag-Leffler in Sweden during the semester \emph{Higher algebraic structures in algebra, topology and geometry}, supported by the Swedish Research Council under grant no. 2016-06596.


\section{Realizations of the multiplihedra} 
\label{sec:I}

Drawing from the work of Forcey in \cite{Forcey08}, we define the weighted Forcey--Loday realizations of the multiplihedra and describe their geometric properties in \cref{prop:PropertiesKLoday}.
We then show how they can be recovered from the Ardila--Doker realizations of the multiplihedra, which are in particular generalized permutahedra.


\subsection{2-colored trees and multiplihedra}

\subsubsection{2-colored trees}

We consider in this section \textit{planar rooted trees}, which we simply abbreviate as \textit{trees}. The term \emph{edge} refers to both internal and external edges. The external edges will sometimes be called leaves. 

\begin{definition}[Cut]
A \emph{cut} of a tree is a subset of edges or vertices which contains precisely one edge or vertex in each path from a leaf to the root.
\end{definition}

A cut divides a tree into an upper part that we color in blue and a lower part that we color in red. 
The edges and vertices of the cut are represented by drawing a black line over them, as pictured in \cref{Fig2:InclusionOrder}. 

\begin{definition}[2-colored tree] \label{def:2coloredtree}
A \emph{2-colored tree} is a tree together with a cut. We call \emph{2-colored atomic tree} a 2-colored binary tree whose cut is made of edges only. 
\end{definition}
We denote by $\CT{n}$ (resp. $\CMT{n}$) the set of 2-colored trees (resp. 2-colored atomic trees) with $n$ leaves, for $n\geq 1$. 

\begin{definition}[Face order]\leavevmode
The \emph{face order} $s\subset t$ on 2-colored trees is defined as follows: a 2-colored tree $s$ is less than a 2-colored tree $t$ if $t$ can be obtained from $s$ by a sequence of contractions of monochrome edges or moves of the cut from a family of edges to an adjacent vertex.

\begin{figure}[h]
\[\vcenter{\hbox{
\begin{tikzpicture}[yscale=0.7,xscale=1]
\draw[very thick, MidnightBlue] (-2.5,3.5)--(-2,2.5);
\draw[very thick, MidnightBlue] (-1.5,3.5)--(-2,2.5);
\draw[very thick, MidnightBlue] (-2,2.5) -- (-1.75,2);
\draw[very thick, MidnightBlue] (-1.25, 2) -- (-1,2.5);
\draw[very thick, MidnightBlue] (-0.5,2.5) -- (0,1.5);
\draw[very thick, MidnightBlue] (0,1.5)--(0,2.5);
\draw[very thick, MidnightBlue] (0,1.5)--(0.5,2.5);
\draw[very thick, MidnightBlue] (1,1)--(1.5,1.5);
\draw[very thick, MidnightBlue] (1.5,1.5)--(1,2.5);
\draw[very thick, MidnightBlue] (1.5,1.5)--(2,2.5);
\draw[very thick, MidnightBlue] (2,2.5)--(1.5,3.5);
\draw[very thick, MidnightBlue] (2,2.5)--(2,3.5);
\draw[very thick, MidnightBlue] (2,2.5)--(2.5,3.5);
\draw[very thick, Red!60] (0,-1)--(0, 1.5); 
\draw[very thick, Red!60] (0,0)--(-1.5,1.5);
\draw[very thick, Red!60] (-1.5,1.5)--(-1.75, 2); 
\draw[very thick, Red!60] (-1.5,1.5)--(-1.25, 2); 
\draw[very thick, Red!60] (0,0)--(1, 1);
\draw (-2,2) to (-1,2); 
\draw (-0.25,1.5)-- (0.25, 1.5) ; 
\draw (0.75,1) to (1.25,1);
\end{tikzpicture}}}
\quad \subset \quad
\vcenter{\hbox{
\begin{tikzpicture}[yscale=0.7,xscale=1]
\draw[very thick, MidnightBlue] (-1.5,1.5)--(-1.5,2.5);
\draw[very thick, MidnightBlue] (-2,2.5) -- (-1.75,2);
\draw[very thick, MidnightBlue] (-1.25, 2) -- (-1,2.5);
\draw[very thick, MidnightBlue] (-0.5,2.5) -- (0,1.5);
\draw[very thick, MidnightBlue] (0,1.5)--(0,2.5);
\draw[very thick, MidnightBlue] (0,1.5)--(0.5,2.5);
\draw[very thick, MidnightBlue] (1,1)--(1.5,1.5);
\draw[very thick, MidnightBlue] (1.5,1.5)--(1,2.5);
\draw[very thick, MidnightBlue] (1.5,1.5)--(1.33,2.5);
\draw[very thick, MidnightBlue] (1.5,1.5)--(1.66,2.5);
\draw[very thick, MidnightBlue] (1.5,1.5)--(2,2.5);
\draw[very thick, MidnightBlue] (-1.5,1.5)--(-1.75, 2); 
\draw[very thick, MidnightBlue] (-1.5,1.5)--(-1.25, 2); 
\draw[very thick, Red!60] (0,-1)--(0, 1.5); 
\draw[very thick, Red!60] (0,0)--(-1.5,1.5);
\draw[very thick, Red!60] (0,0)--(1, 1);
\draw (-1.75,1.5) to (-1.25,1.5); 
\draw (-0.25,1.5) to (0.25,1.5); 
\draw (0.75,1) to (1.25,1);
\end{tikzpicture}}}
\]
\caption{Two 2-colored trees, related by the face order.}
\label{Fig2:InclusionOrder}
\end{figure}
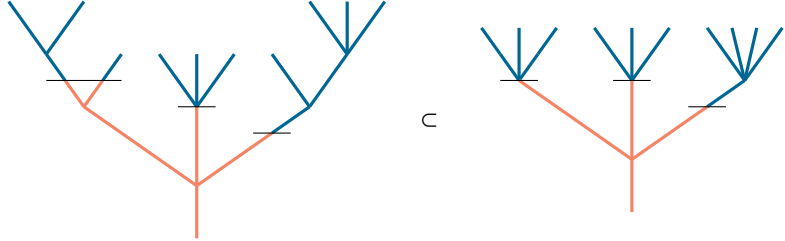

\end{definition}

The set of 2-colored trees endowed with the face order $\subset$ and augmented with a global minimum element $\emptyset_n$ form a poset $(\CT{n}, \subset)$ for which 2-colored atomic trees are atoms.

\begin{definition}[Tamari-type order]\leavevmode
The \emph{Tamari-type order} $s<t$ on  2-colored atomic trees is generated by the following three covering relations: 
\[
{\vcenter{\hbox{
\begin{tikzpicture}[yscale=0.45,xscale=0.45]
\draw[very thick, MidnightBlue] (0,-0.5)--(0,0) -- (-2,2)--(-2,2.5);
\draw[very thick, MidnightBlue] (-1,1)--(0,2)--(0,2.5) ;
\draw[very thick, MidnightBlue] (0,0)--(1,1)--(1,2.5) ;
\draw[very thick, Red!60] (0,-0.5)--(0,-1) ;
\draw (-0.5,-0.5) --(0.5, -0.5);
\draw (-2,2.5) node[above] {$t_1$}; 
\draw (0,2.5) node[above] {$t_2$}; 
\draw (1,2.5) node[above] {$t_3$}; 
\draw (0,-1) node[below] {$t_4$}; 
\end{tikzpicture}}}}
\prec
{\vcenter{\hbox{
\begin{tikzpicture}[yscale=0.45,xscale=0.45]
\draw[very thick, MidnightBlue] (0,-0.5)--(0,0) -- (2,2)--(2,2.5);
\draw[very thick, MidnightBlue] (1,1)--(0,2)--(0,2.5) ;
\draw[very thick, MidnightBlue] (0,0)--(-1,1)--(-1,2.5) ;
\draw[very thick, Red!60] (0,-0.5)--(0,-1) ;
\draw (-0.5,-0.5) --(0.5, -0.5);
\draw (2,2.5) node[above] {$t_3$}; 
\draw (0,2.5) node[above] {$t_2$}; 
\draw (-1,2.5) node[above] {$t_1$}; 
\draw (0,-1) node[below] {$t_4$}; 
\end{tikzpicture}}}}\ , \quad 
{\vcenter{\hbox{
\begin{tikzpicture}[yscale=0.45,xscale=0.45]
\draw[very thick, MidnightBlue] (-2, 2.5)--(-2, 3) ;
\draw[very thick, MidnightBlue] (0, 2.5)--(0, 3) ;
\draw[very thick, MidnightBlue] (1, 2.5)--(1, 3) ;
\draw[very thick, Red!60] (0,-0.5)--(0,0) -- (-2,2)--(-2,2.5);
\draw[very thick, Red!60] (-1,1)--(0,2)--(0,2.5) ;
\draw[very thick, Red!60] (0,0)--(1,1)--(1,2.5) ;
\draw (-2.5,2.5) --(1.5, 2.5);
\draw (-2,3) node[above] {$t_1$}; 
\draw (0,3) node[above] {$t_2$}; 
\draw (1,3) node[above] {$t_3$}; 
\draw (0,-0.5) node[below] {$t_4$}; 
\end{tikzpicture}}}}
\prec
{\vcenter{\hbox{
\begin{tikzpicture}[yscale=0.45,xscale=0.45]
\draw[very thick, Red!60] (0,-0.5)--(0,0) -- (2,2)--(2,2.5);
\draw[very thick, Red!60] (1,1)--(0,2)--(0,2.5) ;
\draw[very thick, Red!60] (0,0)--(-1,1)--(-1,2.5) ;
\draw[very thick, MidnightBlue] (2,2.5)--(2,3) ;
\draw[very thick, MidnightBlue] (0,2.5)--(0,3) ;
\draw[very thick, MidnightBlue] (-1,2.5)--(-1,3) ;
\draw (-1.5,2.5) --(2.5, 2.5);
\draw (2,3) node[above] {$t_3$}; 
\draw (0,3) node[above] {$t_2$}; 
\draw (-1,3) node[above] {$t_1$}; 
\draw (0,-0.5) node[below] {$t_4$}; 
\end{tikzpicture}}}}\ , \quad 
{\vcenter{\hbox{
\begin{tikzpicture}[yscale=0.45,xscale=0.45]
\draw[very thick, MidnightBlue] (0,0.5)--(0,1);
\draw[very thick, MidnightBlue] (0,1)--(-0.5,1.5)--(-0.5,2);
\draw[very thick, MidnightBlue] (0,1)--(0.5,1.5)--(0.5,2);
\draw[very thick, Red!60] (0,0)--(0,0.5);
\draw (-0.5,0.5) --(0.5, 0.5);
\draw (-0.5,2) node[above] {$t_1$}; 
\draw (0.5,2) node[above] {$t_2$}; 
\draw (0,0) node[below] {$t_3$}; 
\end{tikzpicture}}}}
\prec
{\vcenter{\hbox{
\begin{tikzpicture}[yscale=0.45,xscale=0.45]
\draw[very thick, MidnightBlue] (-0.5,2)--(-0.5,2.5);
\draw[very thick, MidnightBlue] (0.5,2)--(0.5,2.5);
\draw[very thick, Red!60] (0,0.5)--(0,1);
\draw[very thick, Red!60] (0,1)--(-0.5,1.5)--(-0.5,2);
\draw[very thick, Red!60] (0,1)--(0.5,1.5)--(0.5,2);
\draw (-1,2) --(1, 2);
\draw (-0.5,2.5) node[above] {$t_1$}; 
\draw (0.5,2.5) node[above] {$t_2$}; 
\draw (0,0.5) node[below] {$t_3$}; 
\end{tikzpicture}}}}
\ ,\]
where each $t_i$, $1\leq i\leq 4$, is a binary tree of the appropriate color. 
\end{definition}

\begin{proposition}
The posets $(\CT{n}, \subset)$ and $(\CMT{n}, <)$ are lattices. 
\end{proposition}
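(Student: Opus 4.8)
\emph{Plan.} The two assertions are of a different nature and I would treat them separately. The first, concerning the face order, I expect to be a formal consequence of polytope theory; the second, concerning the Tamari-type order, is the combinatorial heart of the statement and requires relating $(\CMT{n}, <)$ to the Tamari lattice.

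\textbf{The face order.} The plan is to identify $(\CT{n}, \subset)$ with the augmented face poset of the multiplihedron. Its polytopal realizations (the Forcey--Loday realizations built in this section, or the Ardila--Doker ones) have their faces indexed by 2-colored trees, and one checks that passing to a face of one higher dimension is effected precisely by either contracting a monochrome edge or moving a cut onto an adjacent vertex --- that is, by the two elementary moves defining $\subset$. Granting this identification, the result is immediate: the face poset of any polytope, with the empty face $\emptyset_n$ adjoined as global minimum, is a lattice \cite{Ziegler95}, the meet of two faces being their intersection and the join the smallest face containing both; the global maximum is the 2-colored corolla.

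\textbf{The Tamari-type order.} Here $\CMT{n}$ is finite and bounded, so it suffices to produce binary meets and joins by a single combine-construction (in its meet and join versions). To this end I would exploit the forgetful map $\pi \colon \CMT{n} \to \Tam{n}$ to the Tamari lattice of binary trees with $n$ leaves. Inspecting the three covering relations shows that the two rotation covers project to Tamari covers while the cut cover fixes the underlying tree; hence $\pi$ is order-preserving. Moreover, for a fixed binary tree $T$ the admissible cuts are in bijection with the lower sets of red vertices of $T$, and the cut covers are exactly the covers of the associated distributive lattice of order ideals, so each fibre $\pi^{-1}(T)$ is a distributive lattice. Given $s, t \in \CMT{n}$, the plan is to form $u = \pi(s) \wedge \pi(t)$ (resp. $\pi(s) \vee \pi(t)$) in $\Tam{n}$, transport the cuts of $s$ and $t$ onto $u$ along the connecting rotations, and take their meet (resp. join) in the distributive lattice of cuts of $u$.

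\textbf{Main obstacle.} The delicate point is the last step: proving that this candidate is genuinely the greatest lower bound (resp. least upper bound) in $\CMT{n}$. The difficulty is that the rotation moves and the cut moves are not independent --- a rotation is only available on a monochrome cluster, so the position of the cut constrains which Tamari covers may be applied, and conversely transporting a cut along a rotation may force it to move. Controlling this interaction, i.e. showing that the order on $\CMT{n}$ is governed by the pair (underlying-tree order, cut order) tightly enough to assemble a meet, is where the three covering relations must be made to interlock. A cleaner alternative that avoids any explicit extremal computation would be to realize $(\CMT{n}, <)$ as the quotient of the weak order on a symmetric group by an explicit lattice congruence refining the sylvester congruence by the cut datum; since a quotient of a finite lattice by a lattice congruence is again a lattice, this would settle the claim, the remaining work being to exhibit the congruence and to verify that its fibres are intervals compatible with meets and joins.
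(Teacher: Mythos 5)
Your treatment of the face order is correct and is exactly the paper's route: $(\CT{n},\subset)$ is identified with the augmented face lattice of a polytopal realization of the multiplihedron (this is Forcey's theorem \cite{Forcey08}, recorded as Point~(3) of \cref{prop:PropertiesKLoday}), and the face poset of a polytope with $\emptyset$ adjoined is a lattice. Nothing more is needed there.

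For the Tamari-type order, however, what you have written is a plan with a hole at precisely the load-bearing step, and you say so yourself. The fibrewise construction is not yet a proof: the ``transport of the cuts of $s$ and $t$ onto $u$ along the connecting rotations'' is not well defined without further work, since there are in general several rotation paths from $\pi(s)$ to $u$ in the Tamari lattice and you have not shown that the transported cut is path-independent; worse, a rotation cover in $\CMT{n}$ is only permitted when the two vertices involved carry the same colour, so the position of the cut can obstruct the very rotations you want to transport along, and an order-preserving projection with distributive-lattice fibres onto a lattice does not in general have a lattice as its total poset. This is a genuinely nontrivial statement --- the paper does not prove it from scratch either, but cites \cite[Proposition~117]{CP22}, where it is obtained by realizing the multiplihedron as the shuffle of the associahedron and the interval and showing that this shuffle operation preserves the lattice property in this case. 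Your proposed alternative via a lattice congruence refining the sylvester congruence on the weak order is in the same spirit as the Chapoton--Pilaud argument and is the right way to go for a self-contained proof, but as it stands the congruence is not exhibited and the second half of the claim remains unproved.
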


\begin{proof}
The poset of 2-colored trees was proven in \cite{Forcey08} to be isomorphic to the face lattice of a polytope, the multiplihedron; see Point~(3) of \cref{prop:PropertiesKLoday}. 
The Hasse diagram of the poset of 2-colored atomic trees was proven to be isomorphic to the oriented 1-skeleton of the multiplihedron, and also to be the Hasse diagram of a lattice in \cite[Proposition 117]{CP22}.
\end{proof}

\begin{remark}
F. Chapoton and V. Pilaud introduced in \cite{CP22} the shuffle of two generalized permutahedra (see \cref{sec:generalizedpermutahedra} for definition and examples).
The fact that the poset $(\CMT{n}, <)$ is a lattice follows from the fact that the multiplihedron arises as the shuffle of the associahedron and the interval, which both have the lattice property, and that the shuffle operation preserves the lattice property in this case, see \cite[Corollary 95]{CP22}. 
\end{remark}

\subsubsection{Grafting of trees} \label{sss:grafting}

We will denote the operation of grafting a planar tree $v$ at the $i^{\rm th}$-leaf of a 2-colored tree $u$ by $u \circ_i v$. 
We will also denote the grafting of a level of 2-colored trees $v_1, \ldots, v_k$ on the $k$ leaves of a planar tree by $u(v_1, \ldots, v_k)$. 
We denote by $c^{\T}_n$ and by $c^{\B}_n$ the corollae with $n$ leaves fully painted with the upper and the lower color respectively; we denote by $c_n$ the corolla with $n$ leaves with frontier color at the vertex. 
It is straightforward to see that these two grafting operations on corollae generate all the coatoms in the poset of 2-colored trees: we call $(\B)$, for ``bottom'', the first type of 2-colored trees $c_{p+1+r}\circ_{p+1} c^\T_q$, with $p+q+r=n$ and $2\leq q\leq n$, and we call  $(\T)$, for ``top'', the second type of 2-colored trees $c^\B_k(c_1, \ldots, c_k)$, with $i_1+\cdots+i_k=n$, $i_1, \ldots,i_k\geq 1$, and $k\geq 2$.

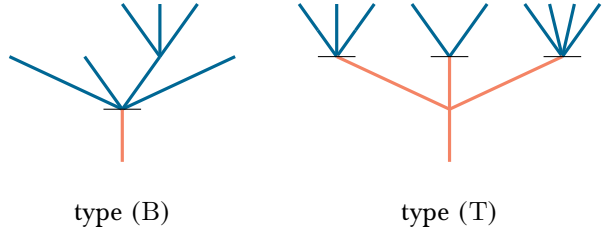
\begin{figure}[h]
\[\vcenter{\hbox{
\begin{tikzpicture}[yscale=0.7,xscale=1]
\draw[very thick, MidnightBlue] (0.5,1)--(0,2);
\draw[very thick, MidnightBlue] (0.5,1)--(0.5,2);
\draw[very thick, MidnightBlue] (0.5,1)--(1,2);
\draw[very thick, MidnightBlue] (0,0)--(0.5, 1); 
\draw[very thick, MidnightBlue] (0,0)--(-0.5, 1); 
\draw[very thick, MidnightBlue] (0,0)--(-1.5,1);
\draw[very thick, MidnightBlue] (0,0)--(1.5, 1);
\draw[very thick, Red!60] (0,-1)--(0, 0); 
\draw (-0.25,0) to (0.25,0); 
\draw (0,-2) node {type $(\B)$};
\end{tikzpicture}}}\qquad \vcenter{\hbox{
\begin{tikzpicture}[yscale=0.7,xscale=1]
\draw[very thick, MidnightBlue] (-1.5,1)--(-1.5,2);
\draw[very thick, MidnightBlue] (-2,2) -- (-1.75,1.5);
\draw[very thick, MidnightBlue] (-1.25, 1.5) -- (-1,2);
\draw[very thick, MidnightBlue] (-0.5,2) -- (0,1);
\draw[very thick, MidnightBlue] (0,1)--(0.5,2);
\draw[very thick, MidnightBlue] (1.5,1)--(1,2);
\draw[very thick, MidnightBlue] (1.5,1)--(1.33,2);
\draw[very thick, MidnightBlue] (1.5,1)--(1.66,2);
\draw[very thick, MidnightBlue] (1.5,1)--(2,2);
\draw[very thick, MidnightBlue] (-1.5,1)--(-1.75, 1.5); 
\draw[very thick, MidnightBlue] (-1.5,1)--(-1.25, 1.5); 
\draw[very thick, Red!60] (0,-1)--(0, 1); 
\draw[very thick, Red!60] (0,0)--(-1.5,1);
\draw[very thick, Red!60] (0,0)--(1.5, 1);
\draw (-1.75,1) to (-1.25,1); 
\draw (-0.25,1) to (0.25,1); 
\draw (1.25,1) to (1.75,1);
\draw (0,-2) node {type $(\T)$};
\end{tikzpicture}}}\]
\caption{Examples of 2-colored trees of type $(\B)$ and $(\T)$ respectively. }
\label{Fig5:FacetsColoredTrees}
\end{figure}


\subsubsection{Multiplihedra} \label{sec:multiplihedra}

\begin{definition}[Multiplihedra]
For any $n\geq 1$, an \emph{$(n-1)$-dimensional multiplihedron} is a polytope of dimension $(n-1)$ whose face lattice is isomorphic to the lattice 
$(\CT{n}, \subset)$
of 2-colored trees with $n$ leaves. 
\end{definition}

\begin{figure}[h]
\[
\begin{tikzpicture}[xscale=0.8,yscale=1]
\draw[fill, opacity=0.12] (-2,2)--(2,2)--(4,0)--(2,-2)--(-2,-2)--(-4,0)--cycle;
\draw (-2,2) node[above left] {$\TreeLa$};
\draw (2,2) node[above right] {$\TreeRa$};
\draw (-2,-2) node[below left] {$\TreeLc$};
\draw (2,-2) node[below right] {$\TreeRc$};
\draw (-4.2,0) node[left] {$\TreeLb$};
\draw (4,0) node[right] {$\TreeRb$};
\draw (-3,1) node[above left] {$\TreeLab$};
\draw (-3,-1) node[below left] {$\TreeLbc$};
\draw (3,1) node[above right] {$\TreeRab$};
\draw (3,-1) node[below right] {$\TreeRbc$};
\draw (0,2.1) node[above] {$\TreeCa$};
\draw (0,-2.1) node[below] {$\TreeCb$};
\draw (0,0) node  {$\TreeCab$};

\draw (-1.99,1.99) node  {$\bullet$};
\draw (1.99,-1.99) node  {$\bullet$};

\draw[thick] (-2,2)--(2,2) node[midway,sloped,allow upside down,scale=0.1]{\thickmidarrow};
\draw[thick] (2,2)--(4,0) node[midway,sloped,allow upside down,scale=0.1]{\thickmidarrow};
\draw[thick] (4,0)--(2,-2) node[midway,sloped,allow upside down,scale=0.1]{\thickmidarrow};

\draw[thick] (-2,2)--(-4,0) node[midway,sloped,allow upside down,scale=0.1]{\thickmidarrow};
\draw[thick] (-4,0)--(-2,-2) node[midway,sloped,allow upside down,scale=0.1]{\thickmidarrow};
\draw[thick] (-2,-2)--(2,-2) node[midway,sloped,allow upside down,scale=0.1]{\thickmidarrow};
\end{tikzpicture}
\]
\caption{A 2-dimensional multiplihedron and the Tamari-type poset $(\CMT{3}, <)$ on its oriented 1-skeleton.}
\label{Fig4:J3}
\end{figure}
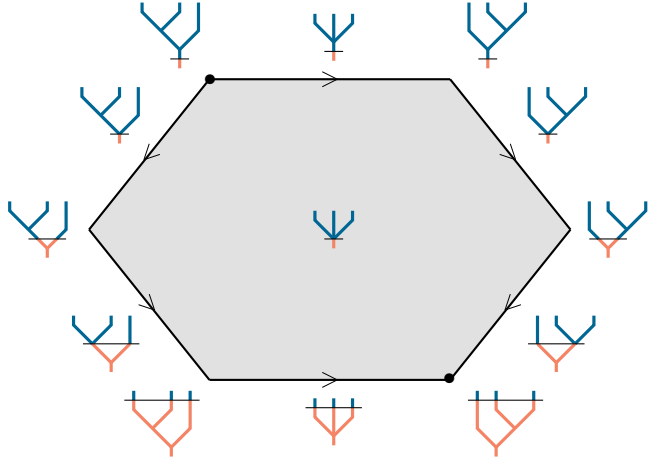

The dimension of a face labeled by a 2-colored tree is given by the sum of the degrees of its vertices defined by 
\[
\left|{\vcenter{\hbox{
\begin{tikzpicture}[scale=0.5]
\draw[very thick, MidnightBlue] (0,-0.5) -- (0,1.5);
\draw[very thick, MidnightBlue] (0,0) -- (-1,1)--(-1,1.5);
\draw[very thick, MidnightBlue] (0,0) -- (1,1)--(1,1.5);
\draw (1,1.5) node[above] {$k$};
\draw (-1,1.5) node[above] {$1$};
\draw (0,1.5) node[above] {$\cdots$};
\end{tikzpicture}}}}\right|=k-2\ , \quad 
\left|{\vcenter{\hbox{
\begin{tikzpicture}[scale=0.5]
\draw[very thick, Red!60] (0,-0.5) -- (0,1.5);
\draw[very thick, Red!60] (0,0) -- (-1,1)--(-1,1.5);
\draw[very thick, Red!60] (0,0) -- (1,1)--(1,1.5);
\draw (1,1.5) node[above] {$k$};
\draw (-1,1.5) node[above] {$1$};
\draw (0,1.5) node[above] {$\cdots$};
\end{tikzpicture}}}}\right|=k-2\ , \quad 
\left|{\vcenter{\hbox{
\begin{tikzpicture}[scale=0.5]
\draw[very thick, MidnightBlue] (0,0) -- (0,1.5);
\draw[very thick, MidnightBlue] (0,0) -- (-1,1)--(-1,1.5);
\draw[very thick, MidnightBlue] (0,0) -- (1,1)--(1,1.5);
\draw[very thick, Red!60] (0,0) -- (0,-0.5);
\draw (-0.5,0)--(0.5,0);
\draw (1,1.5) node[above] {$k$};
\draw (-1,1.5) node[above] {$1$};
\draw (0,1.5) node[above] {$\cdots$};
\end{tikzpicture}}}}\right|=k-1\ .
\]
The codimension of a 2-colored tree is then equal to the number of blue and red vertices. 
In the example of the 2-colored tree depicted on the left of \cref{Fig2:InclusionOrder}, the dimension is equal to 4 and the codimension is equal to 5. 
As proven in \cite[Proposition 117]{CP22}, the oriented $1$-skeleton of a multiplihedron is the Hasse diagram of the Tamari-type poset. 


\subsection{Forcey--Loday realizations of the multiplihedra}
Jean-Louis Loday gave realizations of the associahedra in the form of polytopes with integer coordinates in \cite{Loday04a}. 
Stefan Forcey generalized this construction in \cite{Forcey08} in order to give similar realizations for the multiplihedra. 

\begin{definition}[Weighted 2-colored atomic tree]
A \emph{weighted 2-colored atomic tree} is a pair $(t, \omega)$ made up of a 2-colored atomic tree $t\in \CMT{n}$ with $n$ leaves with a weight $\omega= (\omega_1, \ldots, \omega_n) \in \RR_{>0}^n$. 
We call $\omega$ the \emph{weight} and $n$ the \emph{length} of the weight $\omega$.
\end{definition}

Let $(t, \omega)$ be a weighted 2-colored atomic tree with $n$ leaves. We order its $n-1$ vertices from left to right. At the $i^{\rm th}$ vertex, we consider the sum $\alpha_i$ of the weights of the leaves supported by its left input and 
 the sum $\beta_i$ of the weights of the leaves supported by its right input. 
If the $i^{\rm th}$ vertex is colored by the upper color, we consider the product $\alpha_i\beta_i$ and if the 
$i^{\rm th}$ vertex is colored by the lower color, we consider the product $2\alpha_i\beta_i$.
The associated string produces a point with integer coordinates $M(t, \omega) \in \RR_{>0}^{n-1}$. 
For example, if only the first and last vertices of $t$ are blue, we obtain a point of the form
\[M(t, \omega) = \big(2\alpha_1\beta_1, \alpha_2\beta_2, \ldots, \alpha_{n-2}\beta_{n-2}, 2\alpha_{n-1}\beta_{n-1}\big)\in 
\RR_{>0}^{n-1}\ . \]
\begin{figure}[h!]
\[
\vcenter{\hbox{\begin{tikzpicture}[scale=1.5]
\draw[thick] (1,0)--(2,0);
\draw (1,0) node[above] {$\TreeBa$};
\draw (1.95,0) node[above] {$\TreeBb$};
\draw (1,0) node[below] {$1$};
\draw (2,0) node[below] {$2$};
\end{tikzpicture}}} \qquad \qquad
\vcenter{\hbox{
\begin{tikzpicture}[scale=1.5]
\draw (1,-0.05)--(1,0.05);
\draw (2,-0.05)--(2,0.05);
\draw (3,-0.05)--(3,0.05);
\draw (4,-0.05)--(4,0.05);
\draw (-0.05, 1)--(0.05,1);
\draw (-0.05, 2)--(0.05,2);
\draw (-0.05, 3)--(0.05,3);
\draw (-0.05, 4)--(0.05,4);
\draw[->] (0,0)--(5,0);
\draw[->] (0,0)--(0,5);
\draw (1,0) node[below] {$1$};
\draw (2,0) node[below] {$2$};
\draw (3,0) node[below] {$3$};
\draw (4,0) node[below] {$4$};
\draw (0,1) node[left] {$1$};
\draw (0,2) node[left] {$2$};
\draw (0,3) node[left] {$3$};
\draw (0,4) node[left] {$4$};
\draw[thick] (1,2)--(1,4)--(2,4)--(4,2)--(4,1)--(2,1)--cycle;
\draw (1,2) node[below left] {$\TreeLa$};
\draw (2,1) node[below left] {$\TreeRa$};
\draw (2,4) node[above right] {$\TreeLc$};
\draw (4,2) node[above right] {$\TreeRc$};
\draw (1,4) node[above left] {$\TreeLb$};
\draw (4,1) node[below right] {$\TreeRb$};
\end{tikzpicture}}}
\]
\caption{Examples of points associated to 2-colored atomic trees, with standard weight.}
\end{figure}
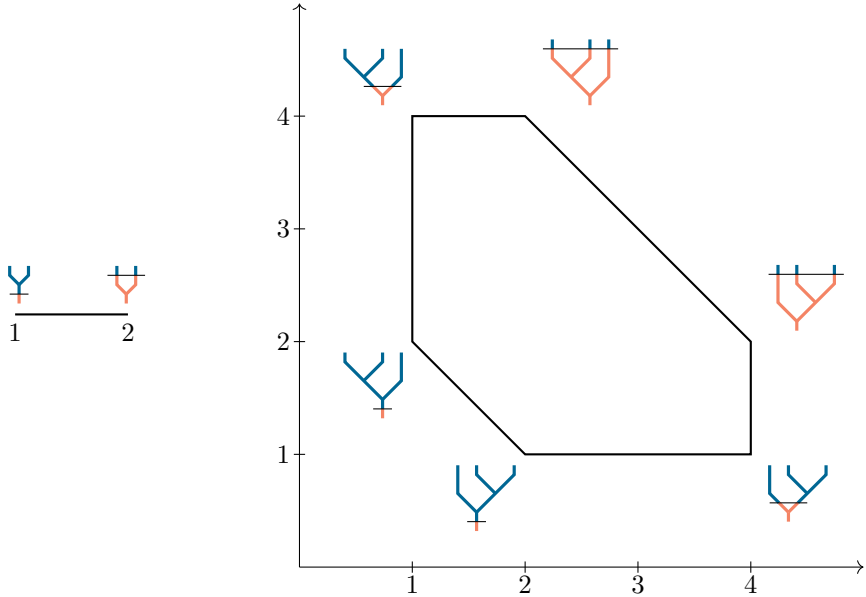

\begin{definition}[Forcey--Loday Realization] \label{def:ForceyLoday}
 The \emph{Forcey--Loday realization of weight $\omega$} of the $(n-1)$-dimensional multiplihedron is the  polytope
\[\J_\omega \coloneqq \conv \big\{M(t, \omega)\mid t\in \CMT{n} \big\}\subset \RR^{n-1}\ .\]
\end{definition}

The Forcey--Loday realization associated to the standard weight $(1, \ldots, 1)$ will simply be denoted by $\J_n$.
By convention, we define the polytope $\J_\omega$ with weight $\omega=(\omega_1)$ of length $1$  to be made up of one point labeled by the 2-colored  tree $\iBT\coloneqq \TreeIab$\ .
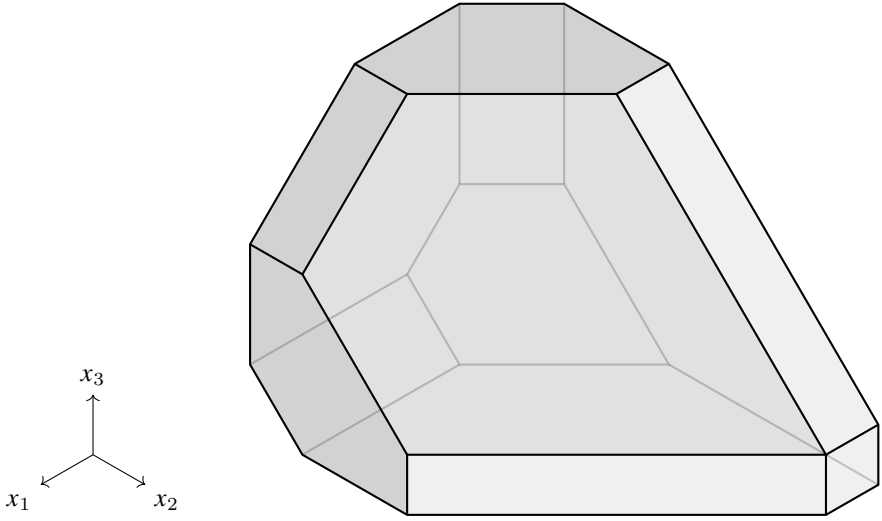
\begin{figure}[h]
\[
\begin{tikzpicture}[scale=0.8, J4]
\draw[->] (4,-4,-3)--(5,-4,-3) node[below left] {$x_1$};
\draw[->] (4,-4,-3)--(4,-3, -3) node[below right] {$x_2$};
\draw[->] (4,-4,-3)--(4,-4,-2) node[above] {$x_3$};

\draw[thick, opacity=0.2] (1,4,1)--(1,2,3)--(2,1,3)--(3,1,2)--(3,2,1)--cycle;
\draw[thick] (2,8,2)--(2,4,6)--(4,2,6)--(6,2,4)--(6,4,2)--cycle;
\draw[thick] (4,1,6)--(6,1,4)--(6,1,2)--(6,2,1)--(6,4,1)--(2,8,1)--(1,8,1)--(1,8,2)--(1,4,6)--(1,2,6)--(2,1,6)--cycle;
\draw[thick] (4,1,6)--(4,2,6);
\draw[thick] (6,1,4)--(6,2,4);
\draw[thick, opacity=0.2] (6,1,2)--(3,1,2);
\draw[thick, opacity=0.2] (6,2,1)--(3,2,1);
\draw[thick] (6,4,1)--(6,4,2);
\draw[thick] (2,8,1)--(2,8,2)--(1,8,2);
\draw[thick, opacity=0.2] (1,8,1)--(1,4,1);
\draw[thick] (1,4,6)--(2,4,6);
\draw[thick, opacity=0.2] (1,2,6)--(1,2,3);
\draw[thick, opacity=0.2] (2,1,6)--(2,1,3);

\draw[fill, opacity=0.12] (2,8,2)--(2,4,6)--(4,2,6)--(6,2,4)--(6,4,2)--cycle;
\draw[fill, opacity=0.18] (6,2,4)--(6,1,4)--(6,1,2)--(6,2,1)--(6,4,1)--(6,4,2)--cycle;
\draw[fill, opacity=0.18] (6,2,4)--(6,1,4)--(4,1,6)--(4,2,6)--cycle;
\draw[fill, opacity=0.18] (4,1,6)--(4,2,6)--(2,4,6)--(1,4,6)--(1,2,6)--(2,1,6)--cycle;
\draw[fill, opacity=0.06] (2,8,2)--(6,4,2)--(6,4,1)--(2,8,1)--cycle;
\draw[fill, opacity=0.06] (2,8,1)--(2,8,2)--(1,8,2)--(1,8,1)--cycle;
\draw[fill, opacity=0.06] (2,8,2)--(1,8,2)--(1,4,6)--(2,4,6)--cycle;
\end{tikzpicture}
\]
\caption{The Forcey--Loday realization of the multiplihedron $\J_4$ .}
\end{figure}

\begin{proposition}\label{prop:PropertiesKLoday}
The Forcey--Loday realization $\J_\omega$  satisfies the following properties. 
\begin{enumerate}[leftmargin=*]
\item Let $t\in \CMT{n}$ be a 2-colored atomic tree. 

\noindent For $p+q+r=n$, with $2\leq q\leq n$, the point $M(t, \omega)$ is contained in the half-space defined by the inequality
\begin{equation}\label{Eq:B}\tag{$\B$}
x_{p+1}+\cdots+x_{p+q-1}\geq \sum_{p+1\leq a<b\leq p+q} \omega_a \omega_b\ , 
\end{equation}
with equality if and only if the 2-colored atomic tree $t$ can be decomposed as $t=u\circ_{p+1} v$, where $u\in\CMT{p+1+r}$ and $v\in \Tam{q}$. 

\noindent For $i_1+\cdots+i_k=n$, with $i_1, \ldots,i_k\geq 1$ and $k\geq 2$, the point $M(t, \omega)$ is contained in the half-space defined by the inequality
\begin{equation}\label{Eq:T}\tag{$\T$}
x_{i_1}+x_{i_1+i_2}+\cdots+x_{i_1+\cdots+i_{k-1}}\leq 
2\sum_{1\leq j<l\leq k} \omega_{I_j} \omega_{I_l}\ , 
\end{equation}
where $I_j=[i_1+\cdots +i_{j-1}+1, \ldots, i_1+\cdots +i_j]$ and $\omega_{I_j}\coloneqq\sum_{a\in I_j} \omega_a$, with equality if and only if the 2-colored atomic tree $t$ can be decomposed as $t=u(v_1, \ldots, v_k)$, where $u\in\Tam{k}$ and $v_j\in \CMT{i_j}$, for $1\leq j\leq k$.

\item The polytope $\J_\omega$ is the intersection of the half-spaces defined in  \emph{(1)}. 

\item The face lattice $(\La(\J_\omega), \subset)$ is isomorphic to the lattice $(\CT{n}, \subset)$ of 2-colored trees with $n$ leaves.

\item Any face of a Forcey--Loday realization of a multiplihedron is isomorphic to a product of a Loday realization of an associahedron with possibly many Forcey--Loday realizations of multiplihedra, via a permutation of coordinates. 
\end{enumerate}
\end{proposition}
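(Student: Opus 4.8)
The plan is to read the coordinates of $M(t,\omega)$ through pairs of leaves, as in Loday's treatment of the associahedron \cite{Loday04a}. For a vertex $i$ of a weighted $2$-colored atomic tree, write $L_i$ and $R_i$ for the leaves above its left and right inputs, so that $\alpha_i=\sum_{a\in L_i}\omega_a$, $\beta_i=\sum_{b\in R_i}\omega_b$, and hence $\alpha_i\beta_i=\sum_{a\in L_i,\,b\in R_i}\omega_a\omega_b$. Each unordered pair $\{a<b\}$ is split by exactly one vertex, its lowest common ancestor $\mathrm{lca}(a,b)$, so summing over all vertices gives the identity $\sum_{i=1}^{n-1}\alpha_i\beta_i=\sum_{1\le a<b\le n}\omega_a\omega_b$. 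The elementary but crucial observation is that, with the left-to-right vertex indexing, $a\le \mathrm{lca}(a,b)<b$, since the position of $\mathrm{lca}(a,b)$ is the rightmost leaf of its left input. I would record this as a weighted Loday sublemma: for any interval of leaves $\{p+1,\dots,p+q\}$ one has $\sum_{i=p+1}^{p+q-1}\alpha_i\beta_i\ge\sum_{p+1\le a<b\le p+q}\omega_a\omega_b$, with equality if and only if these leaves span a subtree, because the left-hand side counts exactly the pairs whose $\mathrm{lca}$ lies in the vertex range $\{p+1,\dots,p+q-1\}$, a set that always contains every pair drawn from $\{p+1,\dots,p+q\}$.

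With this in hand, part (1) splits into the two inequalities. For \eqref{Eq:B} I would bound $x_i\ge\alpha_i\beta_i$ (the factor $2$ at red vertices only helps) and apply the sublemma to the consecutive range $\{p+1,\dots,p+q-1\}$; equality forces every vertex there to be blue and the leaves $\{p+1,\dots,p+q\}$ to span a subtree, i.e. $t=u\circ_{p+1}v$ with $v\in\Tam{q}$ the grafted monochrome tree and $u\in\CMT{p+1+r}$. For \eqref{Eq:T} I would instead partition the $n-1$ vertices into the $k-1$ block boundaries $s_m=i_1+\cdots+i_m$ and the block interiors; the total identity together with the sublemma on each block $I_j$ gives $\sum_m\alpha_{s_m}\beta_{s_m}=\sum_{a<b}\omega_a\omega_b-\sum_j\big(\sum_{\text{block }j}\alpha_i\beta_i\big)\le\sum_{j<l}\omega_{I_j}\omega_{I_l}$, and bounding $x_{s_m}\le 2\alpha_{s_m}\beta_{s_m}$ yields the claimed upper bound. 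Equality now forces every boundary vertex to be red and every block to span a subtree, which is precisely the decomposition $t=u(v_1,\dots,v_k)$ with $u\in\Tam{k}$ and $v_j\in\CMT{i_j}$.

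For (2) and (3), part (1) already shows $\J_\omega$ is contained in the intersection $P$ of all the half-spaces \eqref{Eq:B} and \eqref{Eq:T}, and that each $M(t,\omega)$ saturates the $n-1$ inequalities indexed by the vertices of $t$, whose normals (indicators of the relevant index sets) are linearly independent, so each $M(t,\omega)$ is a vertex of $P$. The reverse inclusion $P\subseteq\J_\omega$ --- equivalently, that these are \emph{all} the facets and that $P$ has no further vertices --- is the step I expect to be the main obstacle. I would handle it by induction on $n$, using that the normals of \eqref{Eq:B} and \eqref{Eq:T} do not depend on $\omega$: the equality analysis of (1) identifies each proper face of $P$, up to a permutation of coordinates, with a product of Loday associahedra and lower-dimensional Forcey--Loday realizations, for which the statement holds by induction (the base case $n=1$ being a point). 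Since $(\CT{n},\subset)$ is a lattice by the earlier proposition and its coatoms are exactly the trees of types $(\B)$ and $(\T)$, these facets assemble into the boundary of an $(n-1)$-dimensional polytope with face poset $(\CT{n},\subset)$; a dimension count then forces $P=\J_\omega$ and gives the isomorphism $(\La(\J_\omega),\subset)\cong(\CT{n},\subset)$, recovering Forcey's realization \cite{Forcey08} for the standard weight.

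Finally, for (4) I would verify the factorization directly on the two facet types. A type-$(\B)$ facet, saturating \eqref{Eq:B}, has vertices $u\circ_{p+1}v$ with $v\in\Tam{q}$ and $u\in\CMT{p+1+r}$; separating the $q-1$ coordinates indexed by the grafted blue subtree $v$ from the remaining $p+r$ coordinates identifies it, via a permutation of coordinates, with $\K_q\times\J_{p+1+r}$ --- a Loday associahedron times a Forcey--Loday multiplihedron. Likewise a type-$(\T)$ facet, saturating \eqref{Eq:T}, is identified with $\K_k\times\J_{i_1}\times\cdots\times\J_{i_k}$, the factor $2$ at the $k-1$ red boundary vertices being absorbed into the weights of the associahedral factor (rescaling the block weights $\omega_{I_j}$ by $\sqrt2$). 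An arbitrary face labeled by a $2$-colored tree $t$ is then obtained by iterating this description, yielding a product with one associahedral factor per blue and red vertex of $t$ and one multiplihedral factor per frontier vertex; this is the content of (4) and is exactly the recursive structure feeding the operadic bimodule structure of \cref{thm:MainOperad}.
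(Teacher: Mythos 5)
Your proposal is correct in outline and, for the one part that the paper actually proves in detail --- Point~(4) --- it follows essentially the same route: you identify a type-$(\B)$ facet with a product $\K \times \J$ by splitting off the $q-1$ coordinates of the grafted blue subtree, identify a type-$(\T)$ facet with $\K\times\J\times\cdots\times\J$ by absorbing the factor $2$ at the red boundary vertices into a $\sqrt2$-rescaling of the block weights $\omega_{I_j}$, and then iterate/induct on $n$. This is exactly the paper's argument (its maps $\Theta_{p,q,r}$ and $\Theta^{i_1,\ldots,i_k}$ and weights $\overline\omega$, $\widetilde\omega$ are the explicit form of your coordinate splittings). Where you genuinely diverge is Points~(1)--(3): the paper simply cites Forcey for these, whereas you reconstruct them. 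Your Point~(1) argument is complete and correct: the identity $\alpha_i\beta_i=\sum_{a\in L_i,b\in R_i}\omega_a\omega_b$, the fact that each pair $\{a<b\}$ is counted once at its lowest common ancestor, which sits in position $j$ with $a\le j<b$, and the resulting interval sublemma give $(\B)$ directly (using $x_i\ge\alpha_i\beta_i$), and subtracting the block sublemmas from the global identity gives $(\T)$ (using $x_i\le 2\alpha_i\beta_i$); the equality analyses correctly recover the decompositions $t=u\circ_{p+1}v$ and $t=u(v_1,\ldots,v_k)$. This is the weighted form of Loday's original argument and is a worthwhile addition, since the paper leaves it implicit.

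The one genuine gap is in your treatment of Points~(2)--(3). Part~(1) only gives $\J_\omega\subseteq P$, where $P$ is the intersection of the half-spaces, plus the fact that each $M(t,\omega)$ saturates enough inequalities; the reverse inclusion $P\subseteq\J_\omega$ is dispatched with ``these facets assemble into the boundary of an $(n-1)$-dimensional polytope \ldots a dimension count then forces $P=\J_\omega$,'' which is not an argument: you have not shown that the listed inequalities exhaust the facets of $P$, nor that $P$ is bounded, nor that $P$ has no vertices other than the $M(t,\omega)$. (Your linear-independence claim for the saturated normals at $M(t,\omega)$ is also asserted rather than proved, though it is true and provable by a triangularity argument.) The standard way to close this, as in Loday's and Forcey's proofs, is an induction showing that every face of $P$ cut out by one of the inequalities is, after your coordinate permutation, a product of lower-dimensional realizations for which $V$-description and $H$-description already agree, and then that $P$ is bounded of dimension $n-1$ with all facets of this form. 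Since the paper itself outsources this step to Forcey, I would either do the same or supply this induction in full; as written, your sketch does not yet constitute a proof of Point~(2).
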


\begin{proof}

Points~(1)--(3) were proved in \cite{Forcey08}.  
We prove Point~(4) by induction on $n$. 
It clearly holds true for $n=1$. Let us suppose that it holds true up to $n-1$ and let us prove it for the polytopes $\J_\omega$, for any weight $\omega$ of length $n$.
We examine first facets. 
In the case of a facet of type $(\B)$ associated to $p+q+r=n$ with $2\leq q \leq n-1$, we consider the following two weights 
\[
\overline{\omega}\coloneqq (\omega_1, \ldots, \omega_{p}, \omega_{p+1}+\cdots+\omega_{p+q}, \omega_{p+q+1}, \ldots,  \omega_{n})
\quad \text{and} \quad 
\widetilde{\omega}\coloneqq (\omega_{p+1}, \ldots, \omega_{p+q})
\]
and the isomorphism 
\begin{align*}
\RR^{p+r} \times \RR^{q-1} &\xrightarrow{\Theta_{p,q,r}} \RR^{n-1} \\
(x_1, \ldots, x_{p+r}) \times (y_1, \ldots, y_{q-1}) &\mapsto
(x_1, \ldots, x_{p} , y_1, \ldots, y_{q-1}, x_{p+1}, \ldots, x_{p+r}) \ .
\end{align*}
The image of the vertices of $\J_{\overline{\omega}}\times \K_{\widetilde{\omega}}$ are sent to the vertices of the facet of $\J_\omega$
labelled by the 2-colored tree $c_{p+1+r}\circ_{p+1} c^\T_q$. 
In other words, the permutation of coordinates $\Theta$ sends bijectively $\J_{\overline{\omega}}\times \K_{\widetilde{\omega}}$ to $\J_\omega$. 
Similarly, in the case of a facet of type $(\T)$ associated to $i_1+\cdots+i_k=n$ with 
$i_1, \ldots,i_k\geq 1$ and $k\geq 2$, 
 we consider the following weights 
\[
\overline{\omega}\coloneqq \big(\sqrt{2}\omega_{I_1}, \ldots, \sqrt{2}\omega_{I_k}\big)
\quad \text{and} \quad 
\widetilde{\omega}_j\coloneqq (\omega_{i_1+\cdots+i_{j-1}+1}, \ldots, \omega_{i_1+\cdots+i_{j-1}+i_j}), \ \text{for}\ 1\leq j\leq k, 
\]
and the isomorphism 
\begin{align*}
\begin{array}{rccc}
\Theta^{i_1, \ldots, i_k}\  : &  \RR^{k-1}\times \RR^{i_1-1}\times \cdots \times \RR^{i_k-1} &\xrightarrow{\cong} &\RR^{n-1}
\end{array}
\end{align*}
which sends 
\[(x_1, \ldots, x_{k-1})\times (y_1^1, \ldots, y^1_{i_1-1})\times \cdots 
\times (y_1^k, \ldots, y^k_{i_k-1})\]
to 
\[(
y^1_1,\ldots, y^1_{i_1-1}, x_1, y^2_1, \ldots, y^2_{i_2-1}, x_2, y^3_1, \ldots, x_{k-1}, y^k_1, \ldots, y^k_{i_k-1}
)\ .\]
The image of the vertices of 
$\K_{\overline{\omega}}\times \J_{\widetilde{\omega}_1}\times \cdots \times \J_{\widetilde{\omega}_k}$ are sent to the vertices of the facet of $\J_\omega$
labelled by the 2-colored tree $c^\B_k(c_1, \ldots, c_k)$. In other words, the permutation of coordinates $\Theta$ sends bijectively $\K_{\overline{\omega}}\times \J_{\widetilde{\omega}_1}\times \cdots \times \J_{\widetilde{\omega}_k}$ to $\J_\omega$. 

We can finally conclude the proof with these decompositions of facets of $\J_\omega$, the induction hypothesis, and Point~(5) of \cite[Proposition~1]{MTTV19}.
\end{proof}


\subsection{Ardila-Doker realizations of the multiplihedra} 
\label{sec:generalizedpermutahedra}

\begin{definition}[Permutahedron] The \emph{$(n-1)$-dimensional permutahedron} is the polytope in $\RR^n$ equivalently defined as:
\begin{itemize}[leftmargin=*]
  \item the convex hull of the points $\displaystyle \sum_{i=1}^{n}i e_{\sigma(i)}$ for all permutations $\sigma \in \mathbb{S}_n$, or
  \item the intersection of the hyperplane $\displaystyle  \left\{x \in \RR^n \ \bigg| \ \sum_{i=1}^{n} x_i = \binom{n+1}{2}\right\}$ with the affine half-spaces \\ $\displaystyle \left\{x \in \RR^n \ \bigg| \ \sum_{i \in I} x_i \geq \binom{|I|+1}{2}\right\}$ for all $\emptyset\neq I \subseteq [n]$.
\end{itemize}
\end{definition}

For a face $F$ of a polytope $P\subset\RR^n$, the \emph{normal cone} of $F$ is the cone 
\[\mathcal{N}_P(F)\coloneqq \left\{ c \in (\RR^n)^{*} \ \bigg | \ F \subseteq \{ x \in P \ | \ c x =\max_{y \in P} c y \}\right\}  \ . \]  
The codimension of $\mathcal{N}_P(F)$ is equal to the dimension of $F$. 
The \emph{normal fan} of $P$ is the collection of the normal cones $\mathcal{N}_P \coloneqq \{\mathcal{N}_P(F) \ | \ F \in \mathcal{L}(P)\setminus\emptyset \}$. 
We refer to \cite[Chapter 7]{Ziegler95} for more details. 

\begin{definition}[Generalized permutahedron]
A \emph{generalized permutahedron} is a polytope equivalently defined as:
\begin{itemize}[leftmargin=*]
\item a polytope whose normal fan coarsens the one of the permutahedron, or 
\item the convex set \[ \left\{ x \in \RR^n \ : \ \sum_{i=1}^{n}x_i = z_{[n]} \ , \sum_{i \in I} x_i \geq z_I \text{ for all } I \subseteq [n] \right\} \ , \]
where $\{ z_I \}_{I \subseteq [n]}$ are real numbers which satisfy the inequalities $z_I+z_J \leq z_{I\cup J} + z_{I \cap J}$ for all $I,J \subseteq [n]$, and where $z_\emptyset =0$.
\end{itemize}
\end{definition}

Generalized permutahedra were introduced by A. Postnikov in \cite{Postnikov09}.
Loday realizations of the associahedra are all generalized permutahedra (see \cite[Corollary 2.16 (2)]{LA21}, specialized to nested linear trees), while Forcey--Loday realizations of the multiplihedra are not. 
However, F. Ardila and J. Doker introduced in \cite{AD13} realizations of the multiplihedra that are generalized permutahedra. 
They are obtained from the Loday realizations of the associahedra via the operation of \emph{$q$-lifting}. 
We will consider the special case $q=1/2$ of their construction.

\begin{definition}[Lifting of a generalized permutahedron {\cite[Definition 2.3]{AD13}}]
For a generalized permutahedron $P\subset \RR^n$, its \emph{$\tfrac{1}{2}$-lifting} $P \left(\tfrac{1}{2}\right) \subset \RR^{n+1}$ is defined by 
\[P \left(\tfrac{1}{2}\right) \coloneqq \left\{ x \in \RR^{n+1} \ : \ 
\sum_{i=1}^{n+1} x_i = z_{[n]} \ , 
\sum_{i \in I} x_i \geq \tfrac{1}{2}z_I \ ,
\sum_{i \in I \cup \{n+1\}} x_i \geq z_I 
\text{ for all } I \subseteq [n] \right\} \ . \]
\end{definition}

\begin{proposition}[{\cite[Proposition 2.4]{AD13}}] 
The $\tfrac{1}{2}$-lifting $P \left(\tfrac{1}{2}\right)$ of a generalized permutahedron is again a generalized permutahedron. 
\end{proposition}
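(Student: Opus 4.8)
The plan is to prove this by exhibiting an explicit supermodular function on the subsets of $[n+1]$ whose associated generalized permutahedron is exactly $P\left(\tfrac12\right)$. Writing $z=\{z_I\}_{I\subseteq[n]}$ for the supermodular function defining $P$ (so that $z_\emptyset=0$ and $z_I+z_J\leq z_{I\cup J}+z_{I\cap J}$), I would define a new collection $z'=\{z'_J\}_{J\subseteq[n+1]}$ by
\[
z'_J\coloneqq \tfrac12 z_J \ \text{ if } n+1\notin J, \qquad z'_J\coloneqq z_{J\setminus\{n+1\}} \ \text{ if } n+1\in J .
\]
First I would observe that this choice recovers the defining inequalities of $P\left(\tfrac12\right)$ exactly: every $J\subseteq[n+1]$ is either of the form $I\subseteq[n]$, giving the constraint $\sum_{i\in I}x_i\geq \tfrac12 z_I$, or of the form $I\cup\{n+1\}$, giving $\sum_{i\in I\cup\{n+1\}}x_i\geq z_I$. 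Moreover $z'_\emptyset=0$ and $z'_{[n+1]}=z_{[n]}$, so the ambient hyperplane condition $\sum_{i=1}^{n+1}x_i=z_{[n]}$ matches as well. Hence $P\left(\tfrac12\right)$ is precisely the polytope cut out by $z'$, and it remains only to check that $z'$ is supermodular, i.e. $z'_J+z'_K\leq z'_{J\cup K}+z'_{J\cap K}$ for all $J,K\subseteq[n+1]$.

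I would verify supermodularity by splitting into cases according to whether $n+1$ lies in $J$ and in $K$. When $n+1$ belongs to neither, the inequality is exactly $\tfrac12$ times the supermodularity of $z$ for the corresponding subsets of $[n]$; when $n+1$ belongs to both, writing $J=I_1\cup\{n+1\}$ and $K=I_2\cup\{n+1\}$ reduces it verbatim to the supermodularity of $z$ for $I_1$ and $I_2$.

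The mixed case, where $n+1\in J$ but $n+1\notin K$ (or vice versa), is the heart of the argument and the step I expect to be the main obstacle. Writing $J=I_1\cup\{n+1\}$ and $K=I_2$ with $I_1,I_2\subseteq[n]$, one has $J\cup K=(I_1\cup I_2)\cup\{n+1\}$ and $J\cap K=I_1\cap I_2$, so the required inequality becomes
\[
z_{I_1}+\tfrac12 z_{I_2}\leq z_{I_1\cup I_2}+\tfrac12 z_{I_1\cap I_2}.
\]
Subtracting one half of the supermodularity relation $z_{I_1}+z_{I_2}\leq z_{I_1\cup I_2}+z_{I_1\cap I_2}$ shows that, given supermodularity, this reduces to the monotonicity inequality $z_{I_1}\leq z_{I_1\cup I_2}$. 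Thus the mixed case holds precisely when $z$ is monotone in addition to supermodular — this is the extra input that the factor $\tfrac12$ forces upon us, and supermodularity alone does not suffice (it can fail for non‑monotone $z$, e.g. for certain one‑dimensional generalized permutahedra).

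To finish, I would note that monotonicity does hold for the generalized permutahedra to which the lifting is applied, namely the Loday realizations of the associahedra, which lie in the positive orthant and therefore have nonnegative, monotone support functions; the three cases then combine to show $z'$ is supermodular, and since $P\left(\tfrac12\right)$ is visibly nonempty it is a generalized permutahedron. As an alternative route that bypasses the monotonicity discussion in full generality, one could instead argue that $\tfrac12$-lifting is linear with respect to (signed) Minkowski sums of the simplices $\Delta_I=\conv\{e_i\mid i\in I\}$, check that each lifted building block is a generalized permutahedron, and invoke the closure of the deformation cone of the permutahedron under Minkowski sums and nonnegative dilation.
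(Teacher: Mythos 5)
Your argument is essentially the intended one: the paper offers no proof of this statement and simply defers to \cite[Proposition 2.4]{AD13}, where the claim is established by exactly the kind of direct verification you carry out --- exhibiting the lifted function $z'$ (which you identify correctly, including the normalizations $z'_\emptyset=0$ and $z'_{[n+1]}=z_{[n]}$) and checking supermodularity case by case. Your two pure cases are right, and your analysis of the mixed case is the substantive point: taking $I_1\subseteq I_2$ there shows that supermodularity of $z'$ is in fact \emph{equivalent} to supermodularity of $z$ together with the monotonicity $z_I\leq z_J$ for $I\subseteq J$, so this really is an extra hypothesis and not an artefact of your method. A minimal witness: for $n=1$ and $P=\{-1\}\subset\RR$ one has $z_{\{1\}}=-1<0=z_{\emptyset}$, and the constraints $x_2\geq 0$, $x_1\geq-\tfrac{1}{2}$, $x_1+x_2=-1$ make $P\left(\tfrac{1}{2}\right)$ empty, so the proposition read for arbitrary generalized permutahedra does need the monotonicity (equivalently, up to the obvious normalization, $P\subseteq\RR_{\geq 0}^n$) that is implicit in \cite{AD13}, whose polytopes are nonnegative Minkowski sums of the simplices $\Delta_I$. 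Your closing observation disposes of this for the paper's purposes: the Loday associahedra $\K_\omega$ have all vertex coordinates positive, hence monotone $z$, so the $\tfrac{1}{2}$-lifting used in \cref{prop:lifting} is legitimately a generalized permutahedron. Your alternative route via $\Delta_I\left(\tfrac{1}{2}\right)=\tfrac{1}{2}\Delta_I+\tfrac{1}{2}\Delta_{I\cup\{n+1\}}$ and linearity of the lifting on support functions is also valid for such nonnegative sums and is closer in spirit to how \cite{AD13} organize their argument; either way, what you buy over the bare citation is the explicit identification of where positivity enters.
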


\begin{proposition} 
The generalized permutahedron $\K_\omega\left(\tfrac{1}{2}\right)$ given by the $\tfrac{1}{2}$-lifting of the Loday realization of weight $\omega$ of the associahedron is a realization of the multiplihedron. 
\end{proposition}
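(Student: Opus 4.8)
The plan is to prove the isomorphism of face lattices $\La(\K_\omega(\tfrac12)) \cong (\CT{n}, \subset)$ by working entirely inside the theory of generalized permutahedra, where the combinatorial type of a polytope is encoded by its defining submodular function together with the collection of subsets on which the inequalities can be made simultaneously tight. In particular I would \emph{not} rely on the Forcey--Loday realization here, keeping the statement logically independent of the later projection result \cref{prop:lifting}.

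First I would make the submodular function of the Loday associahedron explicit. Writing $\K_\omega \subset \RR^{n-1}$ for the Loday realization of weight $\omega=(\omega_1,\ldots,\omega_n)$, it is the generalized permutahedron attached to the function $z_I \coloneqq \sum_{a<b}\omega_a\omega_b$, where the sum ranges over the leaves spanned by the interval of vertices $I$, and whose facet-defining inequalities $\sum_{i\in I}x_i \geq z_I$ are indexed by the proper intervals $I$ of $[n-1]$ (equivalently, by the internal edges of linear nested trees, following \cite[Corollary~2.16~(2)]{LA21}); note for instance $z_{\{i\}}=\omega_i\omega_{i+1}$ and $z_{[n-1]}=\sum_{1\leq a<b\leq n}\omega_a\omega_b$. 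Substituting this $z$ into the Ardila--Doker $\tfrac12$-lifting formula yields the $H$-representation of $\K_\omega(\tfrac12)\subset\RR^n$ as the intersection of the hyperplane $\sum_{i=1}^{n}x_i=z_{[n-1]}$ with the two families of half-spaces $\sum_{i\in I}x_i \geq \tfrac12 z_I$ and $\sum_{i\in I\cup\{n\}}x_i \geq z_I$, for $I\subseteq[n-1]$, the new coordinate $x_n$ playing the role of the lifting direction.

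The heart of the argument is then to identify the facets of $\K_\omega(\tfrac12)$ and match them with the coatoms of $(\CT{n},\subset)$, namely the $2$-colored trees of type $(\B)$ and of type $(\T)$ of \cref{sss:grafting}. I would show that the facet-defining inequalities among the family $\sum_{i\in I\cup\{n\}}x_i\geq z_I$ are exactly those indexed by intervals $I$, together with $I=\emptyset$ (giving the ``bottom'' facet $x_n\geq 0$), and that they are in bijection with the type $(\B)$ trees $c_{p+1+r}\circ_{p+1}c^{\T}_q$; symmetrically, the facet-defining inequalities among the family $\sum_{i\in I}x_i\geq\tfrac12 z_I$ are those indexed by intervals, together with $I=[n-1]$ (giving the ``top'' facet $x_n\leq\tfrac12 z_{[n-1]}$), and correspond to the type $(\T)$ trees $c^{\B}_k(c_1,\ldots,c_k)$. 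The point is that $x_n$ records the location of the cut: a collection of simultaneously tight inequalities assembles into a single $2$-colored tree, its blue (upper) part read off from the tight $\tfrac12 z_I$-inequalities and its red (lower) part from the tight $z_I$-inequalities. Submodularity of $z$ guarantees that the subsets which can be made tight together are precisely the nested ones, so that the nested set data of $\K_\omega(\tfrac12)$ coincides with the combinatorics of $2$-colored trees and the whole face lattice with $(\CT{n},\subset)$.

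The main obstacle is exactly the facet-and-incidence bookkeeping of the previous paragraph: a priori \emph{every} $I\subseteq[n-1]$ contributes an inequality in each family, and one must single out which ones are facet-defining and check that the two families interlock so as to reproduce the two grafting operations generating the coatoms, including the role of the factor $\tfrac12$ and of the complementation $I\leftrightarrow[n-1]\setminus I$ forced by the ambient hyperplane $\sum_{i=1}^n x_i = z_{[n-1]}$. This is where the generalized permutahedron formalism is decisive: rather than verifying all incidences by hand, I would invoke the description of the faces of a lifted generalized permutahedron in terms of the faces of its base, following \cite[Section~2]{AD13}. Since the faces of $\K_\omega$ are indexed by linear nested trees, the $\tfrac12$-lifting refines each such face according to the painting of its edges, producing exactly the $2$-colored trees, which identifies $\K_\omega(\tfrac12)$ as a realization of the multiplihedron. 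As a consistency check one may also compare, once \cref{prop:lifting} is available, the resulting facet inequalities with the inequalities $(\B)$ and $(\T)$ of \cref{prop:PropertiesKLoday}, whose tightness pattern already encodes $(\CT{n},\subset)$ by Point~(3) there.
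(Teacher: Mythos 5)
The paper's own proof is a one-line citation of \cite[Corollary 4.10]{AD13}, so a self-contained verification along the lines you propose would be a genuinely different route. Unfortunately, the step you yourself single out as ``the heart of the argument'' --- the facet bookkeeping --- is wrong as stated. First, the two families are matched to the wrong tree types: under $\J_\omega = \pi\bigl(2\K_\omega\bigl(\tfrac12\bigr)\bigr)$, the inequalities $\sum_{i\in I}x_i\geq\tfrac12 z_I$ become the \emph{lower} bounds $(\B)$ of \cref{prop:PropertiesKLoday}, hence realize the type~$(\B)$ trees $c_{p+1+r}\circ_{p+1}c^{\T}_q$ (indexed by intervals $I$, including $I=[n-1]$, which gives $x_n\leq\tfrac12 z_{[n-1]}$), while the inequalities $\sum_{i\in I\cup\{n\}}x_i\geq z_I$ become, modulo the ambient hyperplane, the \emph{upper} bounds $(\T)$ and realize the type~$(\T)$ trees $c^{\B}_k(c_1,\ldots,c_k)$. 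You assert the opposite assignment, and your later sentence (blue part read off from the tight $\tfrac12 z_I$-inequalities) contradicts it. Second, and more seriously, the facet-defining inequalities in the family $\sum_{i\in I\cup\{n\}}x_i\geq z_I$ are \emph{not} indexed by intervals together with $I=\emptyset$: the type~$(\T)$ facets are indexed by compositions $i_1+\cdots+i_k=n$ with $k\geq 2$, equivalently by arbitrary nonempty cut sets $S\subseteq[n-1]$, and the corresponding lifted inequality is the one for $I=[n-1]\setminus S$, which need not be an interval. Concretely, for $n=4$ and standard weights, the facet $x_2\leq 8$ of $\J_4$ (composition $2+2$, the quadrilateral through $(2,8,2)$, $(2,8,1)$, $(1,8,1)$, $(1,8,2)$) comes from $I=\{1,3\}$, which your list omits; conversely your list includes $I=[n-1]$ in that family, whose inequality is implied by the equality $\sum_{i=1}^{n}x_i=z_{[n-1]}$ and defines no facet. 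A quick count confirms the problem: $\J_n$ has $\binom{n}{2}+2^{n-1}-1$ facets, whereas two interval-indexed families would give roughly $2\binom{n}{2}$; already for $n=3$ your scheme produces $7$ candidate facets for the hexagon.

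The strategy is salvageable --- replace ``intervals plus $\emptyset$'' by ``all proper subsets of $[n-1]$'' in the second family, swap the $(\B)/(\T)$ labels, and then carry out the incidence analysis --- but the remaining step (``submodularity guarantees that the subsets which can be made tight together are precisely the nested ones'') is also only a gesture: for generalized permutahedra one must check when $z_I+z_J=z_{I\cup J}+z_{I\cap J}$ for the specific lifted function, which is exactly where the content of \cite[Corollary 4.10]{AD13} lies. Your stated fallback, invoking the description of faces of a lifted generalized permutahedron from \cite{AD13}, is precisely the paper's proof; so as written the proposal either collapses to that citation or requires the corrected facet analysis above to stand on its own.
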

\begin{proof} 
This is a particular case of \cite[Corollary 4.10]{AD13}.
\end{proof}

We call the lifting of the Loday associahedron $\K_\omega\left(\tfrac{1}{2}\right)$ the\emph{ Ardila--Doker realization} of the multiplihedron. It is related to the Forcey--Loday realization via the projection $\pi: \RR^{n+1} \to \RR^n$ which forgets the last coordinate. 
 
\begin{proposition} 
\label{prop:lifting} 
The Forcey--Loday realization of the multiplihedron is the image under the projection $\pi$ of the $\tfrac{1}{2}$-lifting of the Loday realization of the associahedron, scaled by $2$. 
That is, we have  \[ \J_\omega = \pi \left(2 \K_\omega\left(\tfrac{1}{2}\right)\right) \ . \]
\end{proposition}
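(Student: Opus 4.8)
The plan is to establish the identity by comparing irredundant half-space descriptions, taking advantage of the fact that the $\tfrac12$-lifting is contained in an affine hyperplane on which $\pi$ restricts to an affine isomorphism. Throughout, write $\omega=(\omega_1,\dots,\omega_n)$, so that the Loday associahedron $\K_\omega$ is a generalized permutahedron with ground set $[n-1]$ and data $\{z_I\}_{I\subseteq[n-1]}$, and both $\J_\omega$ and $\pi(2\K_\omega(\tfrac12))$ sit in $\RR^{n-1}$. Scaling the Ardila--Doker description by $2$ replaces the defining inequalities $\sum_{i\in I}x_i\geq\tfrac12 z_I$ and $\sum_{i\in I\cup\{n\}}x_i\geq z_I$ by $\sum_{i\in I}y_i\geq z_I$ and $\sum_{i\in I\cup\{n\}}y_i\geq 2z_I$, and the hyperplane $\sum_{i=1}^{n}x_i=z_{[n-1]}$ by $\sum_{i=1}^{n}y_i=2z_{[n-1]}$.

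First I would eliminate the last coordinate. Since $2\K_\omega(\tfrac12)$ lies in the hyperplane $\{\sum_{i=1}^n y_i=2z_{[n-1]}\}$, that coordinate is determined by the others, so $\pi$ restricts to an affine isomorphism of this hyperplane onto $\RR^{n-1}$; consequently $\pi(2\K_\omega(\tfrac12))$ is affinely isomorphic to $\K_\omega(\tfrac12)$ and is again a realization of the multiplihedron, whose facets are in bijection with the $2$-colored trees of types $(\B)$ and $(\T)$. Substituting $y_n=2z_{[n-1]}-\sum_{i<n}y_i$ into each inequality that involves $y_n$ --- a Fourier--Motzkin step that here is a plain substitution --- turns $\sum_{i\in I\cup\{n\}}y_i\geq 2z_I$ into the upper bound $\sum_{i\in[n-1]\setminus I}y_i\leq 2(z_{[n-1]}-z_I)$, yielding
\[
\pi\!\left(2\K_\omega\!\left(\tfrac12\right)\right)=\Big\{\, y\in\RR^{n-1}\ :\ \textstyle\sum_{i\in I}y_i\geq z_I\ \text{ and }\ \sum_{i\in[n-1]\setminus I}y_i\leq 2(z_{[n-1]}-z_I)\ \text{ for all } I\subseteq[n-1]\,\Big\}.
\]

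Next I would match these two families with \eqref{Eq:B} and \eqref{Eq:T}. By the explicit generalized permutahedron structure of the Loday associahedron (\cite[Corollary 2.16]{LA21}), the value $z_I$ is additive over the maximal subintervals of $I$, with $z_{[a,b]}=\sum_{a\leq c<d\leq b+1}\omega_c\omega_d$. For an interval $I=[a,b]$ the lower bound is exactly \eqref{Eq:B} for $(p,q,r)=(a-1,\,b-a+2,\,n-b-1)$. The facet-defining upper bounds occur when $J:=[n-1]\setminus I$ is a set of cut points $\{i_1,\,i_1+i_2,\dots,i_1+\cdots+i_{k-1}\}$; the maximal intervals of $I$ are then the blocks $I_j$ with their last point removed, and the crux is the quadratic identity $z_{[n-1]}-z_I=\sum_{1\leq j<l\leq k}\omega_{I_j}\omega_{I_l}$, proved by splitting $\sum_{1\leq c<d\leq n}\omega_c\omega_d$ into within-block and across-block pairs. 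This makes the upper bound coincide with \eqref{Eq:T}, whose right-hand side is $2\sum_{j<l}\omega_{I_j}\omega_{I_l}$.

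Finally I would conclude. The lower bounds indexed by non-interval $I$ and the upper bounds indexed by non-cut-point complements are not facet-defining --- automatically so, since $\pi(2\K_\omega(\tfrac12))$ is a multiplihedron whose facets biject with types $(\B)$ and $(\T)$ --- hence its facet-defining half-spaces are precisely \eqref{Eq:B} and \eqref{Eq:T}, and Point~(2) of \cref{prop:PropertiesKLoday} identifies their intersection with $\J_\omega$. I expect the main obstacle to be this middle step: correctly determining the numbers $z_I$ for disconnected $I$ and verifying the quadratic identity so that the constant in \eqref{Eq:T} is reproduced, together with checking that the cut-point indexing of $J$ matches the grafting data $i_1,\dots,i_k$ of \eqref{Eq:T}. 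Once these constants are reconciled, equality of the two half-space descriptions is immediate.
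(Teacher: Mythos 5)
Your proof is correct, but it follows a genuinely different route from the paper's. The paper argues on vertices: it invokes Doker's explicit vertex description of the $\tfrac{1}{2}$-lifting, observes that the vertices of $2\K_\omega$ have coordinates $(2\alpha_i\beta_i)$ and that the lifting halves precisely the coordinates attached to upper-colored vertices of the 2-colored atomic tree, and thereby recovers the points $M(t,\omega)$ of \cref{def:ForceyLoday} directly. You instead compare $H$-descriptions, eliminating the last coordinate along the defining hyperplane and matching the resulting inequalities with \eqref{Eq:B} and \eqref{Eq:T}; the computations you flag as the crux (additivity of $z_I$ over the maximal subintervals of $I$, with $z_{[a,b]}=\sum_{a\leq c<d\leq b+1}\omega_c\omega_d$, and the identity $z_{[n-1]}-z_I=\sum_{j<l}\omega_{I_j}\omega_{I_l}$) are correct and do all the work. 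Your approach is longer but more self-contained, and it explains structurally why the scaling by $2$ appears: it cancels the $\tfrac{1}{2}$ in the lower bounds so that the type-$(\B)$ facets coincide on the nose with associahedron facets, while the upper bounds acquire the factor $2$ of \eqref{Eq:T}. One step should be tightened: asserting that the remaining inequalities are ``automatically'' redundant because the multiplihedron has the right number of facets does not by itself identify \emph{which} inequalities are facet-defining. The redundancy is in fact elementary and needs no appeal to the facet count: every upper bound with $I\subsetneq[n-1]$ is already of type \eqref{Eq:T}, since any nonempty complement $J\subseteq[n-1]$ is the set of cut points of some composition $i_1+\cdots+i_k=n$; and for disconnected $I$ the lower bound is the sum of the lower bounds over the maximal intervals of $I$, hence implied by them. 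With that substitution your argument closes cleanly and yields both inclusions at once via Point~(2) of \cref{prop:PropertiesKLoday}.
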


\begin{proof} 
  This follows from the vertex description of $\tfrac{1}{2}$-lifting given in \cite[Definition 3.5.3]{Doker11}, together with the description of the projection from the permutahedron to the multiplihedron given in the proof of \cite[Theorem 3.3.6]{Doker11}. 
  The coordinates of a vertex in $2 \K_\omega$ are of the form $(2\alpha_1\beta_1, \ldots, 2\alpha_n\beta_n)$. 
  A coordinate $2\alpha_i\beta_i$ is then multiplied by $1/2$ in the lifting if and only if its associated vertex in the 2-colored atomic tree is of the upper color. 
  We thus recover the description of \cref{def:ForceyLoday}.
\end{proof}

In summary, we have the following diagram:
\medskip
\begin{equation*}
\begin{matrix}
  $ \small  \text{Loday}$ & & $ \small \text{Ardila--Doker}$ &  & $ \small \text{Forcey--Loday}$ \\
  $ \small  \text{associahedron}$ & & $ \small \text{multiplihedron}$ &  & $ \small \text{multiplihedron}$ \\
  & &  &  & \\
  \K_\omega & \hookrightarrow & \K_\omega \left(\tfrac{1}{2}\right) & \overset{\pi ( 2 \cdot ) }{\twoheadrightarrow} & \J_\omega \\
   & &  &  & \\
  \RR^n & \hookrightarrow & \RR^{n+1} & \twoheadrightarrow & \RR^n \\
  & &  &  & \\
  $ \small \text{Gen. permutahedron}$ & & $ \small \text{Gen. permutahedron}$ &  & $ \small \textit{Not}\text{ a gen. permutahedron}$
\end{matrix}
\end{equation*}


\section{Diagonal of the multiplihedra}
\label{sec:II}

In this section, we define a cellular approximation of the diagonal of the Forcey--Loday realizations of the multiplihedra, and we endow them with an operadic bimodule structure over the Loday realizations of the associahedra in the category $\PolySub$. 
We use the methods of \cite{MTTV19} and the general theory developed in \cite{LA21}.
Our construction of the cellular approximation relies crucially on the fact that the Forcey--Loday multiplihedra, are obtained from the Ardila--Doker multiplihedra by projection (\cref{prop:lifting}).

 
\subsection{The monoidal category $\PolySub$}

We recall the definition of the symmetric monoidal category $(\PolySub, \times)$ from \cite[Section~2.1]{MTTV19}.
\begin{description}
\item[{\sc Objects}] An object of $\PolySub$ is a $d$-dimensional  polytope $P$ in the $n$-dimensional Euclidian space $\RR^n$, for any $0\leq d\leq n$.
\item[{\sc Morphisms}] A morphism in $\PolySub$ is a continuous map  $f: P\to Q$ which sends  $P$ homeomorphically to the underlying set $|\mathcal{D}|$ of a polytopal subcomplex $\mathcal{D}\subset~\La(Q)$ of $Q$ 
such that $f^{-1}(\mathcal D)$ defines a polytopal subdivision of $P$.
\end{description}

We will use the notion of \textit{operad}, \textit{operadic bimodule} and \textit{Hadamard product} of operads and operadic bimodules in the rest of this paper. For the sake of concision, we refer respectively to \cite[Section 1.1.1]{mazuir-I}, \cite[Section 1.1.3]{mazuir-I} and \cite[Section 5.1.12]{LodayVallette12} for a complete definition of these notions. An operad will in particular be a non-symmetric operad in the language of \cite[Section 5.2.8]{LodayVallette12}. The fact that the category $\PolySub$ is monoidal will moreover allow us to define operads and operadic bimodules in polytopes. 

\subsection{Positively oriented polytopes and diagonal maps}

For a polytope $P$, we will denote by $\rho_z P \coloneqq 2z-P$ its reflection with respect to a point $z \in P$. 

\begin{definition}
A \emph{positively oriented polytope} $(P, \vec v)$ is a polytope $P \subset \RR^n$ together with a vector $\vec v\in \RR^n$ which is not perpendicular to any edge of $P\cap \rho_z P$, for any $z \in P$.
\end{definition}

For any point $z$ in a positively oriented polytope, the intersection $P \cap \rho_z P$ admits a unique vertex $\bm_{\vec v}(P \cap \rho_z P)$ which minimizes the euclidean scalar product with $\vec v$, and a unique vertex $\tp_{\vec v}(P \cap \rho_z P)$ which maximizes it. These two distinguished vertices in each intersection define a diagonal map
\begin{align*}
\begin{array}{rlcl}
\triangle_{(P,\vec v)}\  : & P &\to  &P \times P\\
&z & \mapsto& 
\bigl(\bm_{\vec v}(P\cap \rho_zP),\,  \tp_{\vec v}(P\cap \rho_z P)\bigr) \ .
\end{array}
\end{align*}
Such a map is a morphism in $\PolySub$, coincides with the usual thin diagonal $x\mapsto (x, x)$ on vertices, and is fiber-homotopic to it, see \cite[Proposition~5]{MTTV19} and \cite[Proposition 1.1]{LA21}.
Its cellular image, which consists of pairs of faces $(F,G)$ of $P$, admits a combinatorial description in terms of the fundamental hyperplane arrangement of $P$, as we will now recall.

\begin{definition}[Fundamental hyperplane arrangement]
  \label{def:fundamentalhyperplane} 
  An \emph{edge hyperplane} of $P$ is an hyperplane in $\RR^n$ which is orthogonal to the direction of an edge of $P\cap\rho_z P$ for some $z \in P$.
  The \emph{fundamental hyperplane arrangement} $\mathcal{H}_P$ of $P$ is the collection of all edge hyperplanes of $P$. 
\end{definition}

Recall that a face $F$ of a polytope $P \subset \RR^n$ is equal to the intersection of a family of facets $\{F_i\}$. 
If we choose an outward pointing normal vector $\vec F_i$ for each facet $F_i$ (see \cite[Definition 1.25]{LA21}), a basis $\{b_k\}$ of the orthogonal complement of the affine hull of $P$ in $\RR^n$, and use the canonical identification of $\RR^n$ with its dual $(\RR^n)^{*}$, we have that the normal cone of $F$ is given by $\mathcal{N}_P(F)=\cone(\{\vec F_i\} \cup \{b_k,-b_k\})$. 

\begin{proposition}[{\cite[Theorem 1.26]{LA21}}]
  \label{thm:universalformula} 
  Let $(P,\vec v)$ be a positively oriented polytope in $\RR^n$. For each $H\in\mathcal{H}_P$, we choose a normal vector $\vec d_H$ such that $\langle \vec d_H, \vec v \rangle >0$. We have 
\begin{eqnarray*}
  (F,G) \in \Ima \triangle_{(P,\vec v)} 
  &\iff&  \forall H \in \mathcal{H}_P , \ \exists i , \ \langle \vec F_i, \vec d_H \rangle < 0  \text{ or } \exists j , \ \langle \vec G_j, \vec d_H \rangle > 0 \ . 
\end{eqnarray*} 
\end{proposition}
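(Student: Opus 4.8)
The plan is to translate the extremal-vertex selection into a condition on normal cones, and then convert that condition into the stated sign inequalities by convex duality. First I would exploit the reflection symmetry: since $\rho_z$ is the involution $x \mapsto 2z - x$, it preserves $P \cap \rho_z P$ and reverses $\vec v$, hence it swaps $\bm_{\vec v}(P \cap \rho_z P)$ and $\tp_{\vec v}(P \cap \rho_z P)$; in particular $z$ is their midpoint. As $x \in P \cap \rho_z P$ exactly when $x \in P$ and $2z - x \in P$, the image $\Ima \triangle_{(P,\vec v)}$ is precisely the set of pairs $(x,y) \in P \times P$ that are $\vec v$-minimal (resp. $\vec v$-maximal) among all pairs with the same sum $x+y$. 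This exhibits the diagonal as the $\vec v$-extremal section of the fiber map $(x,y) \mapsto x+y$, and makes the cellular nature of the image transparent.

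Next I would derive a normal-cone criterion for a pair of faces $(F,G)$. If $\bm \in \mathrm{relint}\, F$ and $\tp \in \mathrm{relint}\, G$ for some interior point $z$, minimality of $\bm$ gives $-\vec v \in \mathcal{N}_{P\cap \rho_z P}(\bm)$. Because $z$ lies in the interiors of both $P$ and $\rho_z P$, the intersection satisfies the constraint qualification and $\mathcal{N}_{P \cap \rho_z P}(\bm) = \mathcal{N}_P(\bm) + \mathcal{N}_{\rho_z P}(\bm)$. Since reflection through $z$ carries $\tp$ to $\bm$ and transforms normal cones by $-\mathrm{id}$, we have $\mathcal{N}_{\rho_z P}(\bm) = -\mathcal{N}_P(\tp)$; combined with $\mathcal{N}_P(\bm) = \mathcal{N}_P(F)$ and $\mathcal{N}_P(\tp) = \mathcal{N}_P(G)$ this reduces to
\[ \vec v \in \mathcal{N}_P(G) - \mathcal{N}_P(F); \]
denote this cone by $C_{F,G}$. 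For the converse I would begin from a splitting $\vec v = b - a$ with $a \in \mathcal{N}_P(F)$, $b \in \mathcal{N}_P(G)$ and reconstruct an admissible midpoint $z$, using positive orientation to ensure that the extremal vertices are unique and land in the open cells. This establishes $(F,G) \in \Ima \triangle_{(P,\vec v)} \iff \vec v \in C_{F,G}$, where $C_{F,G} = \cone(\{\vec G_j\} \cup \{-\vec F_i\} \cup \{b_k, -b_k\})$.

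I would then apply Farkas duality: $\vec v \notin C_{F,G}$ iff there is a vector $w$, orthogonal to every $b_k$, with $\langle \vec F_i, w\rangle \le 0$ for all $i$, $\langle \vec G_j, w\rangle \ge 0$ for all $j$, and $\langle \vec v, w\rangle < 0$. Setting $\vec d = -w$ (so that $\langle \vec v, \vec d\rangle > 0$), this is exactly the failure of the stated condition at the hyperplane orthogonal to $\vec d$. The easy implication is then immediate: if the stated condition fails, some $\vec d_H$ supplies $w = -\vec d_H$ separating $\vec v$ from $C_{F,G}$, whence $(F,G) \notin \Ima \triangle_{(P,\vec v)}$.

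The main obstacle is the reverse implication: whenever $\vec v \notin C_{F,G}$, a separating direction can be chosen inside the fundamental arrangement $\mathcal{H}_P$. Here I would take the separating hyperplane to be a facet of the cone $C_{F,G}$, spanned by $n-1$ of its generators; its normal $\vec d$ is orthogonal to $n-1$ vectors drawn, up to sign, from the facet normals of $P$ along $F$ and $G$. The crux is to realize $\vec d$ as an edge direction of some intersection $P \cap \rho_z P$: one selects $z$ so that the facets of $P$ meeting along $F$ and the facets of $\rho_z P$ meeting along $\tp$ which correspond to those generators cut out a one-dimensional face of $P \cap \rho_z P$ of direction $\vec d$. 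Then $\vec d$ is an edge direction, its orthogonal hyperplane belongs to $\mathcal{H}_P$, and the equivalence is complete. Throughout, the positive-orientation hypothesis guarantees that $\vec v$ avoids every hyperplane of $\mathcal{H}_P$, so no degenerate perpendicular-edge case intervenes in the duality.
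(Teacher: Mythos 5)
The paper does not actually prove this proposition --- it is quoted from \cite[Theorem 1.26]{LA21} --- so there is no internal proof to compare against; your argument reconstructs the strategy of the cited source (reduce membership in $\Ima \triangle_{(P,\vec v)}$ to the cone criterion $\vec v\in C_{F,G}=\mathcal{N}_P(G)-\mathcal{N}_P(F)$, then separate by walls lying in $\mathcal{H}_P$) and is correct in its essentials. The first two steps are sound: for polyhedra the identity $\mathcal{N}_{P\cap\rho_zP}(x)=\mathcal{N}_P(x)+\mathcal{N}_{\rho_zP}(x)$ needs no constraint qualification, and in the converse direction your candidate $x\in\mathring F$ with $z=(x+y)/2$ is automatically the unique $\vec v$-minimizer because positive orientation forces the argmin over $P\cap\rho_z P$ to be a single vertex.

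The step you flag as the crux is where you should be more careful, although it does close. A facet of $C_{F,G}$ need not be simplicial, so ``spanned by $n-1$ generators'' is not quite right; what you need is that any facet of the Minkowski sum $\mathcal{N}_P(G)+(-\mathcal{N}_P(F))$ equals $\mathcal{N}_P(G')-\mathcal{N}_P(F')$ for faces $F'\supseteq F$ and $G'\supseteq G$ (faces of a Minkowski sum of cones are sums of faces exposed by a common functional, and the faces of $\mathcal{N}_P(F)$ are exactly the $\mathcal{N}_P(F')$ with $F'\supseteq F$). The realization as an edge hyperplane is then immediate and should be spelled out: for any $x'\in\mathring{F'}$, $y'\in\mathring{G'}$ and $z=(x'+y')/2$, the minimal face of $P\cap\rho_zP$ containing $x'$ has normal cone $\mathcal{N}_P(F')-\mathcal{N}_P(G')$, which is $(n-1)$-dimensional, hence that face is an edge and the linear span of your facet is the corresponding edge hyperplane of $\mathcal{H}_P$. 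Finally, your separation argument tacitly assumes $C_{F,G}$ is full-dimensional; when $\dim C_{F,G}<n$ you must instead exhibit a hyperplane of $\mathcal{H}_P$ \emph{containing} $C_{F,G}$ (on which the stated condition then fails, since $\langle \vec d_H,\vec v\rangle>0$), which you obtain by shrinking $F$ and $G$ to smaller faces one dimension at a time until $\dim\bigl(\mathcal{N}_P(G'')-\mathcal{N}_P(F'')\bigr)=n-1$. With these additions the proof is complete.
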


We finally recall general facts from \cite[Section 1.6]{LA21}. 

\begin{definition}[Coarsening projection] 
  \label{def:coarseningprojection} 
  Let $P$ and $Q$ be two polytopes in $\RR^n$ such that the normal fan of $P$ refines the normal fan of $Q$. 
  The \emph{coarsening projection} from $P$ to $Q$ is the map $\theta : \mathcal{L}(P)\to\mathcal{L}(Q)$ which sends a face $F$ of $P$ to the face $\theta(F)$ of $Q$ whose normal cone $\mathcal{N}_Q(\theta(F))$ is the minimal cone with respect to inclusion which contains $\mathcal{N}_P(F)$.
\end{definition}

\begin{proposition} 
\label{prop:refinementofnormalfans}
Let $P$ and $Q$ be two polytopes such that the normal fan of $P$ refines the one of $Q$. 
If $P$ is positively oriented by $\vec v$, then so is $Q$. 
Moreover, the coarsening projection from $P$ to $Q$ commutes with the diagonal maps $\triangle_{(P,\vec v)}$ and $\triangle_{(Q,\vec v)}$, and we have 
\begin{eqnarray*}
  (F,G) \in \Ima \triangle_{(Q,\vec v)} 
  &\iff& \forall H \in \mathcal{H}_P , \ \exists i , \ \langle \vec F_i, \vec d_H \rangle < 0  \text{ or } \exists j , \ \langle \vec G_j, \vec d_H \rangle > 0 \ .
\end{eqnarray*} 
\end{proposition}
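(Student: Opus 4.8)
The plan is to reduce all three assertions to a single comparison of the fundamental hyperplane arrangements, namely the inclusion $\mathcal{H}_Q\subseteq\mathcal{H}_P$, and then to feed this into \cref{thm:universalformula}. First I would record how edge hyperplanes behave under refinement. Since translation does not change a normal fan, the normal fan of $\rho_z P=2z-P$ equals $-\mathcal{N}_P$ for every $z$, so the normal fan of $P\cap\rho_z P$ is the common refinement $\mathcal{N}_P\wedge(-\mathcal{N}_P)$, independently of $z$. Hence $\mathcal{H}_P$ is precisely the set of hyperplanes spanned by the walls (codimension-one cones) of $\mathcal{N}_P\wedge(-\mathcal{N}_P)$, and likewise for $Q$. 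As $\mathcal{N}_P$ refines $\mathcal{N}_Q$ and reflection through the origin commutes with common refinement, $\mathcal{N}_P\wedge(-\mathcal{N}_P)$ refines $\mathcal{N}_Q\wedge(-\mathcal{N}_Q)$; every wall of the coarser fan is then a union of walls of the finer one lying in the same hyperplane, which yields $\mathcal{H}_Q\subseteq\mathcal{H}_P$. The first assertion is now immediate: a vector orients a polytope positively exactly when it lies off every hyperplane of its fundamental arrangement, so $\vec v\notin\bigcup\mathcal{H}_P\supseteq\bigcup\mathcal{H}_Q$ shows that $\vec v$ orients $Q$ as well.

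Next I would establish that the coarsening projection commutes with the diagonals. The key is the naturality of the extremal-face selection under coarsening: for $z$ interior, the faces $\bm_{\vec v}(P\cap\rho_z P)$ and $\tp_{\vec v}(P\cap\rho_z P)$ are the (full-dimensional, since $\vec v$ is generic) cones of $\mathcal{N}_P\wedge(-\mathcal{N}_P)$ containing $-\vec v$ and $\vec v$ in their interior. A full-dimensional cone of the finer fan lies in a unique maximal cone of $\mathcal{N}_Q\wedge(-\mathcal{N}_Q)$, namely the one containing the same distinguished direction; by \cref{def:coarseningprojection} this is exactly the image under $\theta$. Thus $\theta\bigl(\bm_{\vec v}(P\cap\rho_z P)\bigr)=\bm_{\vec v}(Q\cap\rho_z Q)$ and similarly for $\tp$, and passing to cellular images gives $(\theta\times\theta)\bigl(\Ima\triangle_{(P,\vec v)}\bigr)=\Ima\triangle_{(Q,\vec v)}$.

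For the combinatorial formula, applying \cref{thm:universalformula} to the positively oriented polytope $Q$ gives the stated characterization but with the arrangement $\mathcal{H}_Q$ in place of $\mathcal{H}_P$, so it remains to see that enlarging $\mathcal{H}_Q$ to $\mathcal{H}_P$ leaves the set of admissible pairs unchanged. One inclusion is free since $\mathcal{H}_Q\subseteq\mathcal{H}_P$ only adds constraints. For the converse I would argue contrapositively: if a pair $(F,G)$ of faces of $Q$ violates the condition for some $H\in\mathcal{H}_P\setminus\mathcal{H}_Q$ — that is, $\mathcal{N}_Q(F)\subseteq\{\langle\,\cdot\,,\vec d_H\rangle\geq0\}$ and $\mathcal{N}_Q(G)\subseteq\{\langle\,\cdot\,,\vec d_H\rangle\leq0\}$, where I note that $\vec d_H$ lies in the linear span of $Q$ so the lineality generators $\pm b_k$ contribute $0$ and play no role — then $\mathcal{N}_Q(F)$ and $\mathcal{N}_Q(G)$ are two cones of $\mathcal{N}_Q$ weakly separated by a hyperplane, hence weakly separated by a hyperplane spanned by a wall of $\mathcal{N}_Q\wedge(-\mathcal{N}_Q)$, i.e. by some $H'\in\mathcal{H}_Q$; so the condition already fails over $\mathcal{H}_Q$.

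I expect this separation statement to be the main obstacle: one must verify that two cones of a fan which are weakly separated by some hyperplane are always weakly separated by a wall-hyperplane of the symmetrized fan $\mathcal{N}_Q\wedge(-\mathcal{N}_Q)$. I would prove it by taking an extremal separating hyperplane and rotating it until it supports a facet of one of the two cones, observing that such facets are walls of $\mathcal{N}_Q$ and hence of the symmetrized fan. Alternatively, one can bypass the lemma entirely by transporting \cref{thm:universalformula} for $P$ through the image identity of the second paragraph, at the cost of comparing the facet normals of a face $F'$ of $P$ with those of $\theta(F')$ in $Q$.
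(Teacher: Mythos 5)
The paper offers no written proof of this proposition --- it is recalled verbatim from \cite[Section 1.6]{LA21} --- so your attempt has to be judged on its own terms, and it contains a genuine error at its foundation. The load-bearing identification in your first paragraph is false: the common refinement $\mathcal{N}_P\wedge(-\mathcal{N}_P)$ is the normal fan of the \emph{Minkowski sum} $P+\rho_z P$, not of the \emph{intersection} $P\cap\rho_z P$. For an intersection, the normal cone at a point lying in the relative interiors of a face $F$ of $P$ and a face $\rho_z(G)$ of $\rho_z P$ is the Minkowski sum of cones $\mathcal{N}_P(F)+\bigl(-\mathcal{N}_P(G)\bigr)$, not their intersection; accordingly $\mathcal{H}_P$ consists of the hyperplanes $\mathrm{span}\bigl(\mathcal{N}_P(F)\cup\mathcal{N}_P(G)\bigr)$ for suitable pairs of faces, which is in general strictly larger than the set of spans of walls of $\mathcal{N}_P\wedge(-\mathcal{N}_P)$. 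The permutahedron --- precisely the polytope $P$ to which the paper applies this proposition --- makes the discrepancy concrete: its normal fan is centrally symmetric, so $\mathcal{N}_P\wedge(-\mathcal{N}_P)=\mathcal{N}_P$ and your recipe would produce only the braid hyperplanes $x_i=x_j$, whereas the fundamental arrangement actually used in \cref{thm:formuladiagonal} (the index set $D(n)$) consists of all hyperplanes $\sum_{i\in I}x_i=\sum_{j\in J}x_j$ with $|I|=|J|$ and $I\cap J=\emptyset$. This error propagates through all three of your paragraphs: the inclusion $\mathcal{H}_Q\subseteq\mathcal{H}_P$ is true and does yield the first assertion, but it must be derived from the correct description of edge hyperplanes together with the fact that every normal cone of $Q$ is a union of normal cones of $P$ of the same dimension; your second paragraph identifies the extremal vertices of $P\cap\rho_z P$ with maximal cones of the wrong fan (and moreover evaluates $\bm_{\vec v}(Q\cap\rho_z Q)$ at a point $z\in P$ that need not lie in $Q$); and your separation lemma is stated over the wrong arrangement.

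Even setting the fan confusion aside, the third paragraph does not close. The separation lemma you correctly flag as the main obstacle is only sketched, and the sketch fails in general: a facet of the cone $\mathcal{N}_Q(F)$ is a wall of $\mathcal{N}_Q$ only when $F$ is a vertex, so for higher-dimensional faces the ``rotate until the hyperplane supports a facet'' move neither lands in a finite prescribed arrangement nor determines a unique hyperplane. The fallback you mention in your last sentence --- transporting \cref{thm:universalformula} for $P$ through the identity $(\theta\times\theta)\bigl(\Ima\triangle_{(P,\vec v)}\bigr)=\Ima\triangle_{(Q,\vec v)}$ --- is in fact the route taken in \cite{LA21}, but it is not free of charge either: it converts the statement ``there exist $F'\in\theta^{-1}(F)$ and $G'\in\theta^{-1}(G)$ such that for every $H\in\mathcal{H}_P$ the condition holds'' into ``for every $H\in\mathcal{H}_P$ there exists $i$ or $j$ as in the displayed formula,'' and this exchange of quantifiers requires an argument (using that the facet normals of $F$ in $Q$ span, modulo lineality, the union $\bigcup_{F'\in\theta^{-1}(F)}\mathcal{N}_P(F')$) which is absent from your write-up.
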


We will apply \cref{prop:refinementofnormalfans} to $P$ the permutahedron and $Q$ the Ardila--Doker multiplihedron, in order to define a diagonal map on the Forcey--Loday multiplihedron and to compute an explicit formula for its cellular image in \cref{thm:formuladiagonal}.


\subsection{Good orientation vectors and generalized permutahedra}

The projection $\pi : \RR^{n+1} \to \RR^n$ forgetting the last coordinate defines an affine isomorphism between any hyperplane $H$ of equation $\sum_{i=1}^{n+1} x_i = c \in \RR$, and $\RR^n$. 
The inverse map $(\pi_{| H})^{-1}$ is given by the assignment \[ (x_1, \ldots, x_n) \mapsto \left(x_1, \ldots, x_n, c- \sum_{i=1}^{n}x_i\right) \ . \]
If a polytope $P$ is contained in the hyperplane $H$, then the polytope $\pi(P)$ is affinely isomorphic to $P$, and the projection $\pi$ defines a bijection between the faces of $P$ and the faces of $\pi(P)$. Moreover, for every face $F$ of $P$, we have $\dim F = \dim \pi(F)$.

However, the projection $\pi$ does not preserve orthogonality in general, so if $P$ is positively oriented by $\vec v$, the projection $\pi(P)$ might not be positively oriented by $\pi(\vec v)$.  
We restrict our attention to a certain class of orientation vectors for which this property holds, in the case where $P$ is a generalized permutahedron.

\begin{definition} 
\label{def:goodvector}
A \emph{good orientation vector} is a vector $\vec v=(v_1, \ldots, v_{n+1})\in \RR^{n+1}$ satisfying \[v_{i}\geq2v_{i+1}\ , \ \text{for any}\  1\leq i\leq n\ , \quad \text{and}\quad  v_{n+1}>0 \ . \]
\end{definition}

Observe that the family of good orientation vectors is stable under the projection forgetting the last coordinate: if $\vec v$ is a good orientation vector, then so is $\pi(\vec v)$.
Being a good orientation vector is a more restrictive condition than being a principal orientation vector in the sense of \cite[Definition 3.15]{LA21}. Thus, a good orientation vector orients positively any generalized permutahedron. 

\begin{proposition} 
\label{prop:goodprojection}
Let $P \subset \RR^{n+1}$ be a generalized permutahedron, and let $\vec v \in \RR^{n+1}$ be a good orientation vector. 
Then, the polytope $\pi(P)$ is positively oriented by $\pi(\vec v)$. 
Moreover, the projection $\pi$ commutes with the diagonal maps of $P$ and $\pi(P)$, that is $\triangle_{(\pi(P),\pi(\vec v))}=(\pi \times \pi)\triangle_{(P,\vec v)}$.
\end{proposition}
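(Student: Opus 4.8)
The plan is to push everything through the affine isomorphism $\pi_{|H}\colon H\xrightarrow{\cong}\RR^n$, where $H=\{x\in\RR^{n+1}\mid\sum_i x_i=z_{[n+1]}\}$ is the hyperplane containing the generalized permutahedron $P$. First I would fix $z\in P$ and record the identity
\[ \pi(P\cap\rho_z P)=\pi(P)\cap\rho_{\pi(z)}\pi(P)\ , \]
which holds because $\pi$ is linear, hence commutes with the reflection $\rho_z$, and is injective on $H$, a hyperplane containing both $P$ and $\rho_z P$ (the latter since $z\in P\subset H$). Writing $Q\coloneqq P\cap\rho_z P$, this reduces both assertions to comparing the functionals $\langle\vec v,-\rangle$ on $Q$ and $\langle\pi(\vec v),-\rangle$ on $\pi(Q)$ through the bijection $\pi_{|H}$, as $z$ (equivalently $\pi(z)$) ranges over $P$ (equivalently $\pi(P)$).

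For the positive orientation of $\pi(P)$, I would first observe that $Q$ is again a generalized permutahedron: the normal fans of $P$ and of $\rho_z P$ both coarsen the normal fan of the permutahedron (the latter because that fan is symmetric under $x\mapsto -x$), so their common refinement, which is the normal fan of $Q$, coarsens it too. Consequently every edge of $Q$ is parallel to some difference $e_i-e_j$. Projecting, the edges of $\pi(Q)$ are parallel either to $e_i-e_j$ with $i,j\leq n$, or to $\pm e_i$ with $i\leq n$ (the image of a direction $e_i-e_{n+1}$). Since $\vec v$ is a good orientation vector we have $v_1>\cdots>v_{n+1}>0$, so $\pi(\vec v)=(v_1,\ldots,v_n)$ satisfies $\langle\pi(\vec v),e_i-e_j\rangle=v_i-v_j\neq 0$ and $\langle\pi(\vec v),\pm e_i\rangle=\pm v_i\neq 0$. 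Hence $\pi(\vec v)$ is perpendicular to no edge of $\pi(P)\cap\rho_{\pi(z)}\pi(P)=\pi(Q)$, for any $z$, which is exactly the positive orientation of $\pi(P)$ by $\pi(\vec v)$.

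For the commutation of the diagonals, the guiding identity is that for $w\in H$,
\[ \langle\vec v,w\rangle=\langle\pi(\vec v),\pi(w)\rangle+v_{n+1}w_{n+1}\ . \]
This by itself does not pin down the minimizers, so the heart of the argument is an edge-orientation lemma: the functionals $\langle\vec v,-\rangle$ and $\langle\pi(\vec v),\pi(-)\rangle$ induce the same orientation on every edge of $Q$. Along an edge in direction $e_i-e_j$ with $i,j\leq n$ both vary by $v_i-v_j$; along an edge in direction $e_i-e_{n+1}$ they vary by $v_i-v_{n+1}$ and by $v_i$ respectively, which are both strictly positive because $v_{n+1}>0$ and $v_i>v_{n+1}$. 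Both functionals are in general position on $Q$ (the pairings just computed are nonzero), so each determines an acyclic orientation of the $1$-skeleton of $Q$ with a unique source and a unique sink; agreeing on every oriented edge, they share the same minimizing vertex and maximizing vertex. Transporting through $\pi_{|H}$ then yields $\pi(\bm_{\vec v}(Q))=\bm_{\pi(\vec v)}(\pi(Q))$ and $\pi(\tp_{\vec v}(Q))=\tp_{\pi(\vec v)}(\pi(Q))$, and combining with the first displayed identity gives $(\pi\times\pi)\triangle_{(P,\vec v)}(z)=\triangle_{(\pi(P),\pi(\vec v))}(\pi(z))$ for every $z$.

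I expect the main obstacle to be precisely this edge-orientation lemma, and within it the edges in direction $e_i-e_{n+1}$ that are \emph{created} by the projection: there the two functionals genuinely differ in magnitude ($v_i-v_{n+1}$ versus $v_i$), and only the defining inequalities of a good orientation vector — positivity of $v_{n+1}$ together with the strict decrease $v_1>\cdots>v_{n+1}$ — force them to agree in sign. This is the exact place where the good-orientation hypothesis, rather than mere genericity of $\vec v$, is indispensable.
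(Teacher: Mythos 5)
Your overall architecture matches the paper's: reduce everything to the identity $\pi(P\cap\rho_z P)=\pi(P)\cap\rho_{\pi(z)}\pi(P)$, check that $\pi(\vec v)$ is not orthogonal to any edge of the projected intersection, and then observe that $\pi$ preserves the sign of $\langle \vec v, -\rangle$ along edges so that the extremal vertices correspond. However, there is a genuine gap at the one place where the real work happens: your claim that $Q=P\cap\rho_z P$ is again a generalized permutahedron, and hence that every edge of $Q$ is parallel to some $e_i-e_j$, is false. The normal fan of an \emph{intersection} is not the common refinement of the normal fans (that is the normal fan of the Minkowski sum); at a face $F=F_P\cap F_{\rho_z P}$ the normal cone is the \emph{sum} $\mathcal{N}_P(F_P)+\mathcal{N}_{\rho_z P}(F_{\rho_z P})$, which need not be a union of braid cones. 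The correct statement, which the paper imports from \cite[Propositions 1.30 and 3.4]{LA21}, is that the edges of $P\cap\rho_z P$ have direction vectors with entries in $\{0,\pm1\}$ and equally many $+1$'s and $-1$'s --- e.g.\ $e_1-e_2-e_3+e_4$, arising from an edge lying on a facet of $P$ with normal $-e_{\{1,4\}}$ and a facet of $\rho_z P$ with normal $e_{\{2,3\}}$.

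This matters because you have also silently weakened the hypothesis: you use only that $\vec v$ has strictly decreasing positive coordinates, whereas \cref{def:goodvector} requires $v_i\geq 2v_{i+1}$. For root directions and their projections your weaker condition would indeed suffice, but for the directions that actually occur it does not: $\vec v=(4,3,2,1)$ is strictly decreasing and positive yet orthogonal to $e_1-e_2-e_3+e_4$. The condition $v_i\geq 2v_{i+1}$ is exactly what guarantees that the first non-zero coordinate of such a direction vector dominates the sum of all the remaining ones, so that $\langle\vec d,\vec v\rangle\neq 0$ and, after deleting the last coordinate, $\langle\pi(\vec d),\pi(\vec v)\rangle\neq 0$ with the same sign --- this is the paper's one-line argument, and it is also visible in \cref{ss:about}, where the edge hyperplane $x_1+x_4=x_2+x_3$ (normal to precisely such a non-root direction) separates two chambers giving genuinely different diagonals on $\J_4$. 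Once you replace your edge-direction claim by the correct one from \cite{LA21} and use the full strength of \cref{def:goodvector}, the rest of your argument (including the acyclic-orientation step identifying $\tp$ and $\bm$) goes through and coincides with the paper's proof.
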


\begin{proof} 
Since $P$ is a generalized permutahedron, the direction of the edges of the intersection $P\cap\rho_z P$, for any $z \in P$, are vectors with coordinates equal to $0,1$ or $-1$, and the same number of $1$ and $-1$ (combine Proposition 1.30 and Proposition 3.4 of \cite{LA21}). 
The direction $\vec d$ of such an edge satisfies $\langle \vec d, \vec v \rangle \neq 0$, since the first non-zero coordinate of $\vec d$ will contribute a greater amount than the sum of the remaining coordinates in the scalar product.  
For the same reason, we have $\langle \pi(\vec d), \pi(\vec v) \rangle \neq 0$. As $\pi(P\cap\rho_z P)=\pi(P)\cap\rho_{\pi(z)}\pi(P)$, we have in particular that the image of the edges of $P\cap\rho_z P$ under $\pi$ are the edges of $\pi(P)\cap\rho_{\pi(z)}\pi(P)$ and thus that $\pi(P)$ is positively oriented by $\pi(\vec v)$. 
For the last part of the statement, observe that $\pi$ preserves the orientation of the edges: if we have $\langle \vec d, \vec v \rangle >0$, then we have $\langle \pi(\vec d), \pi(\vec v) \rangle > 0$. 
Hence, the image of the vertex $\tp_{\vec v}(P\cap\rho_z P)$, which maximizes $\langle - ,\vec v \rangle$ over $P\cap\rho_z P$, under $\pi$ is equal to the vertex $\tp_{\pi(\vec v)}(\pi(P)\cap\rho_{\pi(z)} \pi(P))$ which maximizes $\langle - ,\pi(\vec v) \rangle$ over $\pi(P)\cap\rho_{\pi(z)} \pi(P)$. The argument for the minimum $\bm(P\cap\rho_z P)$ is the same.
\end{proof}

\begin{proposition}
Let $P\subset\RR^{n+1}$ be a generalized permutahedron. 
Any two good orientation vectors $\vec v, \vec w$ define the same diagonal maps on $P$ and $\pi(P)$, that is, we have $\triangle_{(P,\vec v)}=\triangle_{(P,\vec w)}$ and $\triangle_{(\pi(P),\pi(\vec v))}=\triangle_{(\pi(P),\pi(\vec w))}$.
\end{proposition}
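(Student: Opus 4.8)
The plan is to reduce the equality of the diagonal maps to a statement about the signs of scalar products against edge directions, and then to observe that these signs do not depend on the particular good orientation vector chosen.

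First I would recall, as established in the proof of \cref{prop:goodprojection}, that since $P$ is a generalized permutahedron, every edge of the intersection $P\cap\rho_z P$ (for any $z\in P$) has a direction $\vec d$ whose coordinates lie in $\{-1,0,1\}$ and which contains as many $+1$'s as $-1$'s. The entire map $\triangle_{(P,\vec v)}$ is determined by the pair $\bigl(\bm_{\vec v}(P\cap\rho_z P),\,\tp_{\vec v}(P\cap\rho_z P)\bigr)$ of extremal vertices of each such intersection, and these are in turn determined by the acyclic orientation induced on the $1$-skeleton of $P\cap\rho_z P$ by the linear functional $\langle -,\vec v\rangle$: orienting each edge from the smaller to the larger value, the vertex $\tp_{\vec v}$ is the unique sink and $\bm_{\vec v}$ the unique source (uniqueness follows from positivity of the orientation, and the fact that a local maximum of a linear functional on a polytope is a global one). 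Thus it suffices to prove that any two good orientation vectors $\vec v,\vec w$ induce the same orientation, i.e. that for every admissible edge direction $\vec d$ the quantities $\langle \vec d,\vec v\rangle$ and $\langle \vec d,\vec w\rangle$ have the same sign.

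The key step is then the following elementary estimate. For a good orientation vector $\vec v$ (\cref{def:goodvector}), the conditions $v_{n+1}>0$ and $v_i\geq 2v_{i+1}$ propagate upward to give $v_j>0$ for all $j$, and downward to give $v_{i+k}\leq v_i/2^k$. Hence, writing $i$ for the index of the first nonzero coordinate of $\vec d$, the triangle inequality together with the geometric series bound yields $\bigl|\sum_{j>i} d_j v_j\bigr|\leq \sum_{j>i} v_j < v_i$, so that $\langle \vec d,\vec v\rangle$ has the same sign as the leading coordinate $d_i\in\{+1,-1\}$. Crucially this sign depends only on $\vec d$ and not on $\vec v$, so $\langle \vec d,\vec v\rangle$ and $\langle \vec d,\vec w\rangle$ share the same sign for any pair of good orientation vectors. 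This gives $\triangle_{(P,\vec v)}=\triangle_{(P,\vec w)}$.

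Finally, for the realization $\pi(P)$ I would not repeat the edge analysis but instead apply $\pi\times\pi$ to the equality just obtained and invoke \cref{prop:goodprojection}, which gives $\triangle_{(\pi(P),\pi(\vec v))}=(\pi\times\pi)\triangle_{(P,\vec v)}=(\pi\times\pi)\triangle_{(P,\vec w)}=\triangle_{(\pi(P),\pi(\vec w))}$, using that $\pi(\vec v)$ and $\pi(\vec w)$ are again good orientation vectors by the stability remark following \cref{def:goodvector}. The main obstacle lies entirely in the key estimate: one must check that the factor $2$ in the defining inequalities of a good orientation vector is precisely strong enough to dominate the tail of a $\{-1,0,1\}$-vector, and that positivity of all coordinates is genuinely used. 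The reduction to edge orientations and the projection step are then formal.
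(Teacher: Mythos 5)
Your proof is correct, but it follows a genuinely different route from the paper's. The paper argues abstractly: good orientation vectors are principal orientation vectors in the sense of \cite[Definition 3.15]{LA21}, all principal orientation vectors lie in the same chamber of the fundamental hyperplane arrangement of the permutahedron, hence they define the same diagonal on the permutahedron by \cite[Proposition 1.23]{LA21} and therefore on any generalized permutahedron by \cref{prop:refinementofnormalfans}; the statement for $\pi(P)$ then follows from \cref{prop:goodprojection} exactly as in your last paragraph. You instead verify the chamber statement by hand: since every edge hyperplane of $P$ is normal to a $\{-1,0,1\}$-vector $\vec d$ with balanced signs, the geometric-series estimate $\sum_{j>i}v_j<v_i$ forces the sign of $\langle\vec d,\vec v\rangle$ to equal the sign of the leading nonzero coordinate of $\vec d$, independently of the good orientation vector chosen; this pins down the acyclic orientation of the graph of each $P\cap\rho_z P$, hence its unique source and sink, hence the diagonal. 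Your estimate is precisely the one already used in the proof of \cref{prop:goodprojection}, so your argument is self-contained and makes explicit why the factor $2$ in \cref{def:goodvector} is exactly what is needed, at the cost of re-deriving in this special case what \cite[Proposition 1.23]{LA21} provides in general. Both reductions of the $\pi(P)$ statement to the $P$ statement are identical.
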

\begin{proof}
Good orientation vectors are principal orientation vectors \cite[Definition 3.15]{LA21}. Since all principal orientation vectors live in the same chamber of the fundamental hyperplane arrangement of the permutahedron, they all define the same diagonal on the permutahedron \cite[Proposition 1.23]{LA21}, and thus the same diagonal on any generalized permutahedron (\cref{prop:refinementofnormalfans}). So, we have $\triangle_{(P,\vec v)}=\triangle_{(P,\vec w)}$. Finally, using \cref{prop:goodprojection}, we have $\triangle_{(\pi(P),\pi(\vec v))}=(\pi \times \pi)\triangle_{(P,\vec v)}=(\pi \times \pi)\triangle_{(P,\vec w)}=\triangle_{(\pi(P),\pi(\vec w))}$. 
\end{proof}

\subsection{Diagonal of the Forcey--Loday multiplihedra}
\label{sec:diagonal}

\begin{definition}
A \emph{well-oriented realization of the multiplihedron} is a positively oriented polytope which realizes the multiplihedron and such that the orientation vector induces the Tamari-type order on the set of vertices. 
\end{definition}

\begin{proposition}
\label{prop:OrientationVector}
Any good orientation vector induces a well-oriented realization $\left( \J_\omega, \vec v \right)$ of the Forcey--Loday multiplihedron, for any weight $\omega$. 
\end{proposition}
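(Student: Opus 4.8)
The plan is to check the two defining conditions of a well-oriented realization separately: that $\vec v$ orients $\J_\omega$ positively, and that it induces the Tamari-type order on the vertices $\{M(t,\omega)\mid t\in\CMT{n}\}$. That $\J_\omega$ is a realization of the multiplihedron is already Point~(3) of \cref{prop:PropertiesKLoday}, so only orientation and order remain. For the positive orientation I would avoid the Forcey--Loday coordinates and instead pass through the Ardila--Doker realization. Writing $\omega$ for a weight of length $n$, so that $\J_\omega\subset\RR^{n-1}$, $\K_\omega\!\left(\tfrac12\right)\subset\RR^{n}$ and $\pi:\RR^n\to\RR^{n-1}$, I would lift $\vec v=(v_1,\ldots,v_{n-1})$ to a good orientation vector $\tilde{\vec v}=(v_1,\ldots,v_{n-1},v_n)\in\RR^n$ by choosing any $0<v_n\leq v_{n-1}/2$; this is possible because $v_{n-1}>0$, and it satisfies $\pi(\tilde{\vec v})=\vec v$. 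Since $2\K_\omega\!\left(\tfrac12\right)$ is a generalized permutahedron (a dilate of one) and every good orientation vector is principal, hence orients every generalized permutahedron positively, $\tilde{\vec v}$ orients $2\K_\omega\!\left(\tfrac12\right)$. Applying \cref{prop:goodprojection} to $P=2\K_\omega\!\left(\tfrac12\right)$ then shows that $\pi\!\left(2\K_\omega\!\left(\tfrac12\right)\right)$ is positively oriented by $\pi(\tilde{\vec v})=\vec v$, and by \cref{prop:lifting} this polytope is exactly $\J_\omega$. This settles positive orientation and, as a by-product, records that the diagonal of $\J_\omega$ is the $\pi$-projection of the diagonal of the Ardila--Doker realization.

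For the order condition I would argue directly on the integer points $M(t,\omega)$. By \cite[Proposition 117]{CP22} together with Point~(3) of \cref{prop:PropertiesKLoday}, the edges of the $1$-skeleton of $\J_\omega$ are precisely the covering relations of the Tamari-type order, so it suffices to show that $\langle \vec v,\, M(t,\omega)-M(s,\omega)\rangle>0$ for each of the three generating covering relations $s\prec t$. Each relation is local and changes only a few coordinates of $M(\cdot,\omega)$. For a monochrome rotation (the first two relations, of a blue, respectively red, binary vertex), mass is moved between the two coordinates $g_1<g_2$ of the rotating vertices, giving $M(t,\omega)-M(s,\omega)=\mu\lambda\,(e_{g_1}-e_{g_2})$ with color factor $\mu\in\{1,2\}$ and $\lambda>0$ the product of the total weights of the two outer subtrees; for the cut move (the third relation) the underlying uncolored tree is unchanged and a single vertex is recolored from blue to red, so its coordinate is doubled and $M(t,\omega)-M(s,\omega)=\lambda\,e_{g}$ for some coordinate $g$ and $\lambda>0$. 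In the first case $\langle\vec v, M(t,\omega)-M(s,\omega)\rangle=\mu\lambda\,(v_{g_1}-v_{g_2})>0$ because $g_1<g_2$ and the inequalities $v_i\geq 2v_{i+1}>0$ force $\vec v$ to be positive and strictly decreasing; in the second case $\langle\vec v, M(t,\omega)-M(s,\omega)\rangle=\lambda\,v_{g}>0$ for the same positivity reason. Thus $\langle\vec v,-\rangle$ strictly increases along every covering relation, so the orientation it induces on the $1$-skeleton coincides with the Hasse diagram of the Tamari-type order.

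I expect the main obstacle to be the bookkeeping in the second paragraph: matching each of the three local covering pictures to the precise change in the coordinates of $M(\cdot,\omega)$, and in particular verifying that the cut move affects exactly one coordinate, and only through the color factor $2$ (since the left and right input weights of the recolored vertex depend only on the underlying tree). Once these three coordinate differences are pinned down, the sign computations are immediate. It is worth noting that the order condition only uses that $\vec v$ is positive and strictly decreasing, whereas the stronger requirement $v_i\geq 2v_{i+1}$ is genuinely needed for the positive-orientation part, where it guarantees via \cref{prop:goodprojection} that the projected vector remains non-perpendicular to the $\{0,\pm1\}$-edges of $P\cap\rho_zP$.
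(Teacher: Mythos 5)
Your proposal is correct and follows essentially the same two-step route as the paper's proof: the positive-orientation condition is deduced from \cref{prop:goodprojection} applied to the Ardila--Doker lift of \cref{prop:lifting}, and the Tamari-order condition from the observation that every edge of $\J_\omega$ is directed by a positive multiple of $e_i$ or of $e_i-e_j$ with $i<j$, which pairs positively with a strictly decreasing positive vector. You simply make explicit two points the paper leaves to the reader, namely the choice of last coordinate when lifting $\vec v$ to a good orientation vector in $\RR^n$ and the coordinate differences for the three generating covering relations, and both of these check out.
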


\begin{proof}
Using \cref{def:ForceyLoday}, we can compute that any edge of the realization of the multiplihedron $\J_\omega$ is directed, according to the Tamari type order, by either $ e_i$ or $ e_i- e_j$, for $i<j$.
Since $\vec v$ has strictly decreasing coordinates, the scalar product is in each case positive. 
It remains to show that $P\cap\rho_z P$ is oriented by $\vec v$, for any $z \in P$. 
This follows directly from \cref{prop:goodprojection}, and the fact that $\J_\omega$ arises as the projection under $\pi$ of a generalized permutahedron as shown in  \cref{prop:lifting}.
\end{proof}

Any good orientation vector therefore defines a diagonal map $\triangle_\omega : \J_\omega\to \J_\omega \times \J_\omega$, for any weight $\omega$.
These diagonal maps are all equivalent up to isomorphim in the category $\PolySub$. 

\begin{proposition}
\label{prop:transitionmap}
For any pair of weights $\omega$ and $\theta$ of length $n$, there exists a unique isomorphism 
$\tr=\tr_\omega^\theta : \J_\omega \to \J_\theta$ in the category $\PolySub$,  
which preserves homeomorphically the faces of the same type and which commutes with the respective diagonals.
\end{proposition}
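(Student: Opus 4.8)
The plan is to treat this as the multiplihedron analogue of the transition maps between Loday realizations of the associahedra constructed in \cite{MTTV19}, and to import that result through the facet decomposition of \cref{prop:PropertiesKLoday}~(4). The proof proceeds in two stages: first showing that the two diagonals have the \emph{same} cellular image, and then building the homeomorphism $\tr$ by induction on dimension.

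First stage (cellular images coincide). I would show that $\Ima\triangle_\omega$ and $\Ima\triangle_\theta$, viewed as sets of pairs of 2-colored trees, are equal. The universal formula of \cref{thm:universalformula} (applied through \cref{prop:refinementofnormalfans} and \cref{prop:goodprojection}, using that $\J_\omega=\pi(2\K_\omega(\tfrac12))$) computes this image from three pieces of data: the fundamental hyperplane arrangement, the facet normal \emph{directions} $\vec F_i,\vec G_j$, and the good orientation vector. The orientation vector may be taken to be the same for both weights, and by the previous proposition any good orientation vector gives the same diagonal. The facet inequalities $(\B)$ and $(\T)$ of \cref{prop:PropertiesKLoday}~(1) show that the facet normal directions of $\J_\omega$ do not depend on $\omega$, only the right-hand constants do; likewise the edge directions of $\J_\omega\cap\rho_z\J_\omega$ are of the weight-independent form $e_i$ or $e_i-e_j$ (as in the proof of \cref{prop:OrientationVector}), so $\mathcal H_{\J_\omega}$ is weight-independent. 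Hence the universal formula returns the same set of pairs for $\omega$ and $\theta$.

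Second stage (construction of $\tr$). Since both realizations have face lattice $(\CT{n},\subset)$, there is a canonical lattice isomorphism identifying faces labeled by the same 2-colored tree, which I would realize as a $\PolySub$-isomorphism by induction on $n$. The base case $n=1$ is a point, where $\tr$ is the identity. For the inductive step, \cref{prop:PropertiesKLoday}~(4) identifies each facet of $\J_\omega$, via a coordinate permutation, with a product of a Loday associahedron and lower-dimensional Forcey--Loday multiplihedra; the transition map on the associahedron factor is supplied by \cite{MTTV19} and those on the multiplihedron factors by the induction hypothesis. Since the diagonal of a product of positively oriented polytopes is the product of the diagonals \cite{MTTV19}, these facet-wise maps commute with the diagonals and agree on common lower faces, so they assemble into a map $\tr$ on $\partial\J_\omega$ that preserves face types and commutes with the boundary diagonal. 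To extend over the interior I would use the first stage: the diagonals induce polytopal subdivisions $S_\omega=\triangle_\omega^{-1}(\Ima\triangle_\omega)$ and $S_\theta=\triangle_\theta^{-1}(\Ima\triangle_\theta)$, and since the cellular images coincide these subdivisions carry a canonical, boundary-compatible correspondence of cells. Declaring $\tr$ to be affine on each cell of $S_\omega$, sending its vertices to the corresponding vertices of $S_\theta$, yields a homeomorphism which preserves face types and satisfies $(\tr\times\tr)\triangle_\omega=\triangle_\theta\,\tr$ by construction. Uniqueness follows since face-type preservation determines $\tr$ on all vertices, hence on $\partial\J_\omega$ by induction, and diagonal-compatibility together with the rigidity of $S_\omega$ then determines it on the interior.

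The main obstacle I expect is the interior extension: one must check that the canonical cell correspondence between $S_\omega$ and $S_\theta$ is genuinely realized by a cellwise-affine homeomorphism that is simultaneously compatible with the boundary maps built from \cref{prop:PropertiesKLoday}~(4) and with the diagonal. This amounts to verifying that the two subdivisions are not merely abstractly isomorphic but isomorphic \emph{rel boundary} in a way compatible with the maps $\triangle_\omega$ and $\triangle_\theta$ --- the point where the coincidence of cellular images established in the first stage does the essential work.
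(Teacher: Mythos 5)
Your overall architecture --- first matching the cellular images of $\triangle_\omega$ and $\triangle_\theta$, then building $\tr$ by induction on dimension via the facet decomposition of \cref{prop:PropertiesKLoday}~(4) --- is the same as the paper's, which simply invokes \cite[Sections~3.1--3.2]{MTTV19}; your first stage (weight-independence of the facet normal directions and of the fundamental hyperplane arrangement, hence of the combinatorial image of the diagonal) is a correct and useful expansion of what the paper leaves implicit.

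The gap is in the interior extension. Declaring $\tr$ to be affine on each cell of $S_\omega$ does not produce a map commuting with the diagonals: commutation forces the functional equation $\tr\bigl((z_1+z_2)/2\bigr)=\bigl(\tr(z_1)+\tr(z_2)\bigr)/2$ whenever $\triangle_\omega(z)=(z_1,z_2)\in F\times G$, so on the cell $(F+G)/2$ the map must equal $\tfrac12\bigl(\tr|_F\circ\pr_1+\tr|_G\circ\pr_2\bigr)\circ\triangle_\omega$; since $\tr|_F$ and $\tr|_G$ are themselves only piecewise affine, the forced map is affine only on a strictly finer subdivision than $S_\omega$, so your prescribed map and the forced one disagree. (Separately, the cells of $S_\omega$ are not simplices in general, so ``affine and sending every vertex to the corresponding vertex'' is overdetermined and need not exist.) Worse, the forced formula is not always a definition from boundary data: $\Ima\triangle_n$ contains pairs $(v,\J_n)$ with $v$ a vertex and $G=\J_n$ the whole polytope (e.g.\ $\blue{\blue{\bullet \bullet} \bullet} \times \purple{\bullet \bullet \bullet}$ in $\J_3$), so on the cell $(v+\J_\omega)/2$ the equation reads $\tr(z)=\bigl(\tr(v)+\tr(2z-v)\bigr)/2$ with $2z-v$ ranging over all of $\J_\omega$ --- a self-referential condition, not an extension from $\partial\J_\omega$ together with the lower-dimensional induction. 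Resolving this requires the ordering/iteration argument of \cite[Proposition~7]{MTTV19}, where one orders the cells of the induced subdivision so that each value is determined by previously treated cells (or argues by successive approximation); this is precisely the step the paper flags as ``highly non-trivial'', and it is the step your proposal elides.
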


\begin{proof}
The arguments of \cite[Sections~3.1-3.2]{MTTV19} hold in the present case using \cref{prop:PropertiesKLoday}. 
We note that the crucial condition above is that the map $\tr$ commutes with the respective diagonals: this makes the map $\tr$ unique and highly non-trivial to construct, see the proof of \cite[Proposition 7]{MTTV19}.
\end{proof}

\begin{definition} \label{def:diagonal-multipl-forcey-loday}
We define $\triangle_n : \J_n \to \J_n\times \J_n$ to be the diagonal induced by any good orientation vector for the Forcey--Loday realization of standard weight $\omega=(1, \ldots, 1)$.
\end{definition}


\subsection{Operadic bimodule structure on the Forcey--Loday multiplihedra}

We will use the transition maps $\tr$ of \cref{prop:transitionmap} above to endow the family of standard weight Forcey--Loday multiplihedra with an operadic bimodule structure over the standard weight Loday associahedra. 
The uniqueness property of the map $\tr$ will be used in a crucial way.

\begin{definition}[Action-composition maps] \label{def:action-composition}
For any $p+q+r=n\geq 1$, $k\geq 2$ and any $i_1,\ldots,i_k \geq 1$, we define the \emph{action-composition maps} by 
\[
\vcenter{\hbox{
\begin{tikzcd}[column sep = 16pt]
\circ_{p+1}\ : \ \J_{p+1+r}\times \K_q
\arrow[rr,  "\tr\times \id"]
& & 
\J_{(1,\ldots,q,\ldots,1)}\times \K_q 
\arrow[rr,hookrightarrow, "\Theta_{p,q,r}"]
&  &
\J_{n}\ \ \text{and}
\end{tikzcd}
}}
\]
\[
\vcenter{\hbox{
\begin{tikzcd}[column sep = 16pt]
\gamma_{i_1,\ldots,i_k}\ : \ \K_{k}\times \J_{i_1} \times \cdots \times \J_{i_k}
\arrow[rr,  "\tr\times \id"]
& &
\K_{(i_1,\ldots,i_k)} \times \J_{i_1} \times \cdots \times \J_{i_k} 
\arrow[rr,hookrightarrow, "\Theta^{i_1, \ldots , i_k}"]
& &
\J_{i_1+\cdots + i_k}\ , 
\end{tikzcd}
}}
\]
where the last inclusions are given by the block permutations of the coordinates introduced in the proof of \cref{prop:PropertiesKLoday}. 
\end{definition}

Recall from \cite[Theorem 1]{MTTV19} that the diagonal maps $\triangle_n : \K_n \to \K_n \times \K_n$ define a morphism of operads, where the operad $\{ \K_n \times \K_n \}$ is to be understood as the Hadamard product $\{ \K_n \} \times \{ \K_n \}$.
The next proposition shows that the diagonal maps $\triangle_n : \K_n \to \K_n \times \K_n$ and $\triangle_n : \J_n \to \J_n \times \J_n$ are compatible with the action-composition maps introduced in \cref{def:action-composition}.

\begin{proposition} 
\label{prop:thetacommutes}
The diagonal maps $\triangle_n$ commute with the maps $\Theta$.  
\end{proposition}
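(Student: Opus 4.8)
The plan is to prove the slightly more general statement, valid for an arbitrary weight $\omega$ of length $n$, that the diagonal $\triangle_{\J_\omega}$ restricted to any facet of type $(\B)$ or $(\T)$ agrees, through the coordinate permutation $\Theta$, with the product of the diagonals of the corresponding factors; the standard-weight assertion of the proposition then follows at once. Let me fix a facet of type $(\B)$ attached to a decomposition $p+q+r=n$ and write $F\coloneqq \Theta_{p,q,r}(\J_{\overline\omega}\times\K_{\widetilde\omega})$ for the associated facet of $\J_\omega$, using the identification of facets with products from \cref{prop:PropertiesKLoday}(4). The type $(\T)$ facets and the map $\Theta^{i_1,\ldots,i_k}$ are treated in exactly the same manner.

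First I would show that $\triangle_{\J_\omega}$ genuinely restricts to $F$. The facet $F$ is cut out by a single valid inequality $\ell(x)\leq c$ from \cref{prop:PropertiesKLoday}(1), with $F=\J_\omega\cap\{\ell=c\}$. For any $z\in F$ one has $\ell(z)=c$, so for $y\in\J_\omega$ we get $\ell(2z-y)=2c-\ell(y)\geq c$; if in addition $2z-y\in\J_\omega$, then $\ell(2z-y)\leq c$ forces $\ell(y)=\ell(2z-y)=c$, so both $y$ and its reflection lie in $F$. This yields the key identity $\J_\omega\cap\rho_z\J_\omega=F\cap\rho_z F$ for every $z\in F$, whence $\bm_{\vec v}$ and $\tp_{\vec v}$ of the global intersection are already computed inside the facet and $\triangle_{\J_\omega}(z)=\triangle_{(F,\vec v)}(z)$.

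Next, since $\Theta_{p,q,r}$ is a permutation of coordinates it is orthogonal, hence intertwines reflections, intersections, and the extremal functionals $\bm_{\vec v},\tp_{\vec v}$: concretely $\langle \vec v,\Theta u\rangle=\langle \Theta^{-1}\vec v,u\rangle$ and $\Theta(\rho_a X)=\rho_{\Theta a}\Theta X$. Transporting $\triangle_{(F,\vec v)}$ along $\Theta_{p,q,r}^{-1}$ therefore produces the diagonal of the product polytope $\J_{\overline\omega}\times\K_{\widetilde\omega}$ oriented by the pullback vector $\vec w\coloneqq\Theta_{p,q,r}^{-1}(\vec v)$. The crucial computation — which I expect to be the heart of the argument — is that $\vec w$ splits into a pair of good orientation vectors on the two factors. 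Reading off coordinates, the $\K_{\widetilde\omega}$-component is the consecutive block $(v_{p+1},\ldots,v_{p+q-1})$, while the $\J_{\overline\omega}$-component is $(v_1,\ldots,v_p,v_{p+q},\ldots,v_{n-1})$; the only inequality that is not immediate from $v_i\geq 2v_{i+1}$ is the junction condition $v_p\geq 2v_{p+q}$, which holds because the gap $q\geq 2$ gives $v_p\geq 2^{q}v_{p+q}\geq 2v_{p+q}$. This is precisely where the factor $2$ in \cref{def:goodvector} is used. For the type $(\T)$ map, the $\K_k$-component is $(v_{i_1},v_{i_1+i_2},\ldots,v_{i_1+\cdots+i_{k-1}})$ and each $\J_{i_j}$-component is a consecutive block, and the same estimate applies across each gap $i_{j+1}\geq 1$.

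Finally I would invoke the product compatibility of the diagonal: for positively oriented polytopes $(P,\vec v_P)$ and $(Q,\vec v_Q)$ one has $(P\times Q)\cap\rho_{(z,w)}(P\times Q)=(P\cap\rho_zP)\times(Q\cap\rho_wQ)$, and the scalar product with the split vector $(\vec v_P,\vec v_Q)$ separates, so the diagonal of the product is the product of the diagonals (following the arguments of \cite{MTTV19}). Applying this to $\vec w=(\vec w_{\J},\vec w_{\K})$, and using that any two good orientation vectors define the same diagonal on a generalized permutahedron and on its projection (the independence proposition preceding \cref{prop:transitionmap}, applicable since $\K_{\widetilde\omega}$ is a generalized permutahedron and $\J_{\overline\omega}$ is a projection of one by \cref{prop:lifting}), the factor diagonals $\triangle_{(\J_{\overline\omega},\vec w_{\J})}$ and $\triangle_{(\K_{\widetilde\omega},\vec w_{\K})}$ are the standard diagonals on $\J_{\overline\omega}$ and $\K_{\widetilde\omega}$. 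Combining the three steps gives $\triangle_{\J_\omega}\circ\Theta=(\Theta\times\Theta)\circ(\triangle\times\triangle)$ up to the evident shuffle of factors, which is exactly the asserted commutation of $\triangle_n$ with $\Theta$.
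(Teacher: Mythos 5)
Your proof is correct and follows essentially the same route as the paper: the heart of both arguments is the verification that the pullback $\Theta^{-1}(\vec v)$ of a good orientation vector splits into good orientation vectors on each factor, the junction inequalities being exactly where the factor $2$ in \cref{def:goodvector} is used. The paper outsources the remaining reduction (restriction of the diagonal to facets, compatibility with coordinate permutations and with products) to \cite[Proposition 4.14]{LA21}, which you instead prove by hand.
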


\begin{proof}
First observe that a good orientation vector has decreasing coordinates, thereby induces the diagonal maps $\triangle_n : \K_n \to \K_n \times \K_n$ and the operad structure on $\{\K_n\}$ defined in \cite{MTTV19}. 
Following \cite[Proposition 4.14]{LA21}, to prove the claim it suffices to show that the preimage under $\Theta^{-1}$ of a good orientation vector is still a good orientation vector for each associahedron and multiplihedron. 
This is easily seen to be the case from the definition of $\Theta$, in the proof of \cref{prop:PropertiesKLoday}. 
\end{proof}

\begin{samepage}
\begin{theorem}\label{thm:MainOperad}\leavevmode
\begin{enumerate}[leftmargin=*]
\item The collection $\{\J_n\}_{n\geq 1}$ together with the action-composition maps $\circ_i$ and $\gamma_{i_1,\ldots,i_k}$ form an operadic bimodule over the operad $\{\K_n\}$ in the category $\PolySub$. 

\item The maps $\{\triangle_n : \J_n \to \J_n\times \J_n\}_{n\geq 1}$ form a morphism of $(\{\K_n\},\{\K_n\})$-operadic bimodules in the category $\PolySub$. 
\end{enumerate}
\end{theorem}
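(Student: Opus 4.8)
The plan is to transfer the strategy used in \cite{MTTV19} to establish the operad structure on the Loday associahedra, replacing the single facet decomposition of the associahedra by the richer one of Point~(4) of \cref{prop:PropertiesKLoday}, and leaning on the uniqueness of the transition maps $\tr$ of \cref{prop:transitionmap}. For Point~(1), I would first record that each action-composition map of \cref{def:action-composition} is a morphism in $\PolySub$: it is the composite of the isomorphism $\tr\times\id$ with the face inclusion $\Theta$, and the latter is a morphism in $\PolySub$ precisely because Point~(4) of \cref{prop:PropertiesKLoday} identifies the relevant facet of $\J_n$ with the corresponding product of weighted associahedra and multiplihedra. The operadic bimodule axioms — associativity of the right action $\circ_i$ against the operad composition of $\{\K_n\}$, associativity of the left action $\gamma_{i_1,\ldots,i_k}$, compatibility between the left and right actions, and unitality — each assert that two iterated action-composition maps agree. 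Both sides are isomorphisms in $\PolySub$ from one fixed product of standard-weight associahedra and multiplihedra onto a single codimension-$2$ face of some $\J_n$; iterating Point~(4) exhibits that face itself as a product of weighted cells, and both composites are assembled from block permutations of coordinates $\Theta$ and transition maps $\tr$ (of associahedra, as in \cite{MTTV19}, and of multiplihedra, as in \cref{prop:transitionmap}). Unitality is immediate, since $\K_1$ and $\J_1$ are points.

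The crux is to show that the two composites coincide. The block permutations $\Theta$ satisfy the required combinatorial compatibilities directly, by bookkeeping on the indexing of coordinates as displayed in the proof of \cref{prop:PropertiesKLoday}. The transition maps $\tr$ appearing on the two sides are a priori different; however, both composites preserve the type of faces and, crucially, commute with the diagonals — this uses \cref{prop:thetacommutes} for the $\Theta$-part and the defining property of $\tr$ recorded in \cref{prop:transitionmap} for the transition part. By the uniqueness statement of \cref{prop:transitionmap}, applied factor by factor to the product of associahedra and multiplihedra, there is exactly one such face-type-preserving, diagonal-commuting isomorphism, so the two composites must agree. This establishes the bimodule axioms, and hence Point~(1).

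For Point~(2), I would unwind \cref{def:action-composition} and verify that each $\triangle_n$ intertwines the action-composition maps on $\{\J_n\}$ with those on the Hadamard square $\{\J_n\times\J_n\}$, whose bimodule structure over $\{\K_n\times\K_n\}$ is built from the operad morphism $\triangle_n:\K_n\to\K_n\times\K_n$ of \cite[Theorem~1]{MTTV19}. Since every action-composition map has the form $\Theta\circ(\tr\times\id)$, it suffices to combine two facts: the transition maps $\tr$ commute with the diagonals (\cref{prop:transitionmap}), and the block permutations $\Theta$ commute with the diagonals (\cref{prop:thetacommutes}). Composing these, $\triangle$ commutes with each action-composition map; assembling over all arities and over both the left action $\gamma_{i_1,\ldots,i_k}$ and the right action $\circ_i$ yields the morphism-of-bimodules statement.

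The main obstacle is the matching of transition maps in Point~(1), most delicately in the left–right compatibility axiom, where the two composites route through different intermediate weighted cells before landing on the common codimension-$2$ face. This is genuinely resolved only by the uniqueness of $\tr$, which explains why the diagonal-commuting property (\cref{prop:thetacommutes}) cannot be dispensed with even though the statement of Point~(1) makes no mention of diagonals: without it there would be many face-type-preserving isomorphisms between the weighted cells, and the two composites could fail to coincide.
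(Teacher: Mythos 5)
Your proposal is correct and follows essentially the same route as the paper: the proof there simply invokes the argument of \cite[Theorem~1]{MTTV19} \emph{mutatis mutandis}, with \cref{prop:thetacommutes} supplying the commutation of the block permutations $\Theta$ with the diagonals and the uniqueness of the transition maps $\tr$ from \cref{prop:transitionmap} forcing the operadic bimodule axioms to hold. Your unpacking of why the uniqueness (and hence the diagonal-commuting property) is indispensable for matching the two composites in the left--right compatibility axiom is exactly the point the paper's terse proof relies on.
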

\end{samepage}

\begin{proof}
Using \cref{prop:thetacommutes}, we can apply the proof of \cite[Theorem~1]{MTTV19} \emph{mutatis mutandis}. The uniqueness of the transition map $\tr$ is the key argument, as it forces the operadic axioms to hold. We also point out that $\{ \J_n\times \J_n \}$ is to be understood as the Hadamard product $\{ \J_n \} \times \{ \J_n \}$, and that its $(\{\K_n\},\{\K_n\})$-operadic bimodule structure is defined as the pullback of its natural $(\{\K_n \times \K_n\},\{\K_n \times \K_n\})$-operadic bimodule structure under the diagonal maps $\{ \triangle_n : \K_n \to \K_n \times \K_n \}$.
\end{proof}

Point (1) of \cref{thm:MainOperad} was already mentioned in \cite[Section 1.2]{mazuir-I}, where associahedra and multiplihedra are realized as compactifications of moduli spaces of metric trees and used to construct $\Ainf$-structures on the Morse cochains of a closed manifold.


\section{Cellular formula for the diagonal of the multiplihedra} \label{sec:III}

We compute in \cref{thm:formuladiagonal} an explicit cellular formula for the diagonal of the Forcey--Loday multiplihedra, using again the key fact that the Ardila--Doker multiplihedron is a generalized permutahedron to which one can apply \cref{prop:refinementofnormalfans} and the results of \cite{LA21}. We then explain geometrically why this formula necessarily has to differ from the "magical formula" computed for the associahedra in \cite{MTTV19}.


\subsection{2-colored nested linear graphs} \label{ss:2-col}

Let $\gra$ be a \emph{linear graph} with $n$ vertices, as represented in \cref{fig:bijections}.
We respectively write $V(\gra)$ and $E(\gra)$ for its sets of vertices and edges.
Any subset of edges $N\subset E(\gra)$ defines a subgraph of $\gra$ whose edges are $N$ and whose vertices are all the vertices adjacent to an edge in $N$. 
We call this graph the \emph{closure} of~$N$. 

\begin{definition}[Nest and nesting]
\leavevmode
\begin{itemize}[leftmargin=*]
\item A \emph{nest} of a linear graph $\gra$ with $n$ vertices is a non-empty set of edges $N \subset E(\gra)$ whose closure is a connected subgraph of $\gra$.  
\item A \emph{nesting} of a linear graph $\gra$ is a set $\mathcal{N}=\{N_i\}_{i\in I}$ of nests such that 
\begin{enumerate}[leftmargin=*]
    \item the \emph{trivial nest} $E(\gra)$ is in $\mathcal{N}$,
    \item for every pair of nests $N_i\neq N_j$, we have either $N_i \subsetneq N_j$, $N_j \subsetneq N_i$ or $N_i \cap N_j = \emptyset$, and
    \item if $N_i \cap N_j = \emptyset$ then no edge of $N_i$ is adjacent to an edge of $N_j$.
\end{enumerate}
\end{itemize}
\end{definition}

Two nests that satisfy Conditions (2) and (3) are said to be \textit{compatible}. 
We denote the set of nestings of $\gra$ by $\mathcal{N}(\gra)$. 
We naturally represent a nesting by circling the closure of each nest as in \cref{fig:bijections}. 
A nesting is moreover \emph{atomic} if it has maximal cardinality $|\mathcal{N}|=|E(\gra)|$.

\begin{definition}[2-colored nesting] 
A \emph{2-colored nesting} is a nesting where each nest is either colored in blue, red or both red and blue (that is, purple), and which satisfy the following properties: 
\begin{enumerate}[leftmargin=*]
\item if a nest $N$ is blue or purple, then all nests contained in $N$ are blue, and 
\item if a nest $N$ is red or purple, then all nests that contain $N$ are red.
\end{enumerate}
\end{definition}
We call \emph{monochrome} the nests that are either blue or red, and \emph{bicolored} the purple nests. 
We denote by $\mono(\mathcal{N})$ the set of monochrome nests of a 2-colored nesting $\mathcal{N}$, and by $\mathcal{N}_2(\gra)$ the set of 2-colored nestings of $\gra$.
A 2-colored nesting is moreover \emph{atomic} if it has maximal cardinality, and it is made of monochrome nests only. 

\begin{remark} 
The data of a 2-colored nesting on a graph is equivalent to the data of a marked tubing on its line graph, as defined in \cite{DevadossForcey08}. See also \cite[Remark 2.4]{LA21}.
\end{remark}

\begin{lemma} 
\label{lemma:bijection}
There is a bijection between (2-colored) trees with $n$ leaves and (2-colored) nested linear graphs with $n$ vertices. 
Under this map, (2-colored) atomic trees are in bijection with atomic (2-colored) nested linear graphs.
\end{lemma}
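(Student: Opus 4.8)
The plan is to construct the bijection explicitly in both the uncolored and 2-colored settings, verifying at each stage that it restricts correctly to the atomic sub-cases. I would begin with the classical uncolored bijection between planar rooted trees with $n$ leaves and nested linear graphs on $n$ vertices. Given a tree $t$, its $n$ leaves are linearly ordered from left to right, and the $n-1$ gaps between consecutive leaves correspond naturally to the $n-1$ edges of the linear graph $\gra$ on $n$ vertices (thinking of the vertices of $\gra$ as indexing the leaf-gaps, or dually via the line-graph correspondence). The key combinatorial observation is that each internal vertex of $t$ determines the set of leaves it supports, which forms a contiguous block; the corresponding set of gaps strictly interior to that block is precisely a nest (a connected set of edges) of $\gra$. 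I would check that the compatibility conditions (nests are either nested or disjoint-and-non-adjacent) translate exactly the planarity and non-crossing structure of the supports of the vertices of $t$, and that the trivial nest $E(\gra)$ corresponds to the root vertex. The inverse map reconstructs the tree from the nesting by reading off the hierarchy of nests as the hierarchy of vertices.

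Next I would verify that this bijection is an isomorphism of posets with respect to the face orders on each side, so that it upgrades to a statement about the full face lattices and not merely a set bijection; this also immediately gives the atomic correspondence, since atoms on the tree side are binary trees and atoms on the graph side are nestings of maximal cardinality $|E(\gra)|$. Concretely, a binary tree has exactly $n-1$ internal vertices, each supporting a distinct ncontiguous block, which matches the maximal number $n-1 = |E(\gra)|$ of nests in an atomic nesting. I would record that contraction of an internal edge of $t$ corresponds to removal of a nest from the nesting, which is the covering relation in both face orders.

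For the 2-colored refinement, I would layer the coloring data on top of the uncolored bijection. A 2-colored tree carries a cut, which partitions its vertices into blue (upper), red (lower), and frontier (the cut itself, corresponding to purple) parts. Under the vertex-to-nest correspondence, I would assign each nest the color of the vertex producing it. The crucial step is to check that the defining monotonicity conditions of a cut — every path from a leaf to the root crosses the cut exactly once, so blue vertices lie above red vertices — translate precisely into the two nesting conditions: a blue or purple nest contains only blue nests (above the cut stays above), and a red or purple nest is contained only in red nests (below stays below). Frontier vertices, where the cut sits at the vertex itself, give purple nests, matching the ``both'' color. I would then confirm that 2-colored atomic trees — binary, with the cut made of edges only so that no vertex is purple — correspond exactly to atomic 2-colored nestings, which by definition are of maximal cardinality and monochrome only.

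The main obstacle I expect is making the color-compatibility verification airtight: one must be careful that the direction of the monotonicity conditions (blue propagates \emph{downward} through containment of nests, red propagates \emph{upward}) is correctly aligned with the geometric convention that the cut separates an upper blue part from a lower red part, and that the edges-only cuts producing purple-free colorings are matched with monochrome nestings. This is essentially a bookkeeping check, but the orientation of inclusions is easy to get backwards. Since the uncolored statement is a well-documented correspondence (and the remark preceding the lemma already points to the equivalent ``marked tubing on the line graph'' description of \cite{DevadossForcey08}), I would lean on that reference for the uncolored part and devote the bulk of the argument to the coloring dictionary, after which the atomic statement follows by the cardinality count.
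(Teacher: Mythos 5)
Your construction is correct, and it in fact supplies more detail than the paper does: the paper states \cref{lemma:bijection} without proof, relying on the illustration in \cref{fig:bijections}, on the preceding remark identifying 2-colored nestings with the marked tubings of Devadoss--Forcey, and on \cite[Remark 2.4]{LA21} for the uncolored correspondence. Your explicit dictionary is exactly the standard one being invoked: each internal vertex of the tree supports a contiguous block of leaves, the gaps interior to that block form a nest, containment of supports matches containment of nests, the non-crossing of supports is precisely Conditions (2)--(3) in the definition of a nesting (note that Condition (3) is what rules out two blocks sharing a single boundary leaf, which Condition (2) alone would permit on the level of edge sets), and the root gives the trivial nest. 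Your colour dictionary is also correctly oriented: smaller nests sit higher in the tree (closer to the leaves), so ``blue or purple contains only blue'' and ``red or purple is contained only in red'' are exactly the statement that every leaf-to-root path crosses the cut once, with purple reserved for vertices lying on the cut; and the cut is recoverable from the vertex colouring, so the map is indeed a bijection. The cardinality count $n-1=|E(\gra)|$ for the atomic case, together with the observation that an edges-only cut produces no purple nests, settles the second sentence of the lemma. The only slip is the parenthetical claim that the vertices of $\gra$ index the leaf-gaps: there are $n$ vertices and only $n-1$ gaps, so it is the vertices of the \emph{line graph} of $\gra$ that index the gaps; the correspondence you actually use throughout (leaves $\leftrightarrow$ vertices, gaps $\leftrightarrow$ edges) is the correct one.
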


\noindent Under this bijection, vertices of 2-colored trees correspond to nests, and their colors agree under the previous conventions. 

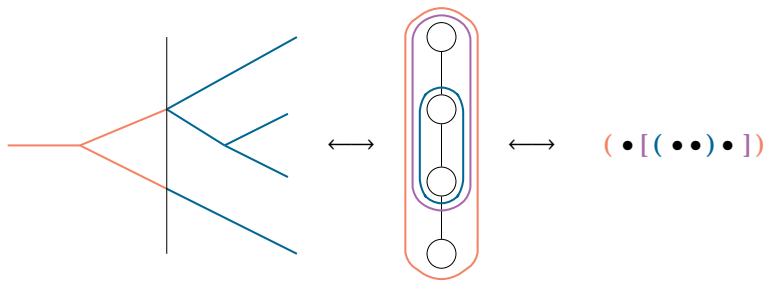
\begin{figure}[h!]
\resizebox{0.8\linewidth}{!}{
\begin{tikzpicture}

\node (b1) at (-2,3) {};
\node (b2)at (-2,2) {};
\node (b3)  at (-2,1) {};
\node (b4)  at (-2,0) {};
\node (b5)  at (-6,1.5) {};   

\draw[MidnightBlue,thick] (b2)--(-3,1.5) node {};
\draw[MidnightBlue,thick] (b3)--(-3,1.5) node {};
\draw[MidnightBlue,thick] (-3,1.5)--(-3.8,2) node {};

\draw[Red!60,thick] (-5,1.5)--(-3.8,2) node {};
\draw[Red!60,thick] (-5,1.5)--(-3.8,0.9) node {};
\draw[Red!60,thick] (-5,1.5)--(-6,1.5) node {};

\draw[MidnightBlue,thick] (-2,0)--(-3.8,0.9) node {};
\draw[MidnightBlue,thick] (-2,3)--(-3.8,2) node {};

\draw[-] (-3.8,3)--(-3.8,0) node {};

\node (B) at (-1.25,1.5) {$\longleftrightarrow$};

\node (x1) [circle,draw=none,minimum size=4mm,inner sep=0.1mm] at (0.15,2.55) {};
\node (x2) [circle,draw=none,minimum size=4mm,inner sep=0.1mm] at (0.15,1.5) {};
\node (x3) [circle,draw=none,minimum size=4mm,inner sep=0.1mm] at (0.15,0.38) {};

\node (t4)[circle,draw=black,minimum size=4mm,inner sep=0.1mm] at (-0,0) {};
\node (t3)[circle,draw=black,minimum size=4mm,inner sep=0.1mm] at (0,1) {};
\node (t2) [circle,draw=black,minimum size=4mm,inner sep=0.1mm] at (0,2) {};
\node (t1) [circle,draw=black,minimum size=4mm,inner sep=0.1mm] at (0,3) {};

\draw[-] (t4)--(t3) node {};
\draw[-] (t3)--(t2) node {};
\draw[-] (t2)--(t1) node {};

\draw [MidnightBlue,rounded corners,thick] (-0.15,0.7) -- (-0.3,0.9) -- (-0.3,2.1) -- (-0.15,2.3) -- (.15,2.3) -- (.3,2.1) -- (.3,.9) -- (.15,.7) -- cycle;
\draw [Purple!80,rounded corners,thick] (-0.15,0.6) -- (-0.4,0.8) -- (-0.4,3.1) -- (-0.15,3.3) -- (.15,3.3) -- (0.4,3.1) -- (.4,.8) -- (.15,.6) -- cycle;
\draw [Red!60,rounded corners,thick] (-0.15,-0.35) -- (-0.5,-0.1) -- (-0.5,3.15) -- (-0.15,3.4) -- (.15,3.4) -- (0.5,3.15) -- (0.5,-0.1) -- (0.15,-0.35) -- cycle;

\node (C) at (1.25,1.5) {$\longleftrightarrow$};

\node (D) at (3.35,1.5) {$\red{ \bullet \purple{\blue{\bullet \bullet} \bullet}}$};

\end{tikzpicture}}
\caption{Bijections between 2-colored trees, 2-colored nested linear graphs, and 2-colored parenthesizations.}
\label{fig:bijections}
\end{figure}

\subsection{Cellular formula for the diagonal} \label{ss:cellular-formula}

\begin{definition}
Let $(\gra,\mathcal{N})$ be a nested linear graph. 
We respectively denote by $B(\mathcal{N})$, $P(\mathcal{N})$ and $R(\mathcal{N})$ the set of blue, purple and red nests of $\mathcal{N}$. 
We define $Q(\mathcal{N})$ to be the set whose elements are the unions of nests
\[   
\bigcup_{i=1}^k R_i \cup \bigcup_{B \in B(\mathcal{N})} B \cup 
  \bigcup_{P \in P(\mathcal{N})} P 
\]
where $R_1,\ldots,R_k \in R(\mathcal{N})$, for any $0 \leq k \leq |R(\mathcal{N})|$, the case $\cup R_i = \emptyset$ being allowed, and where two unions that result in the same set are identified.
\end{definition}

We number the edges of the linear graph with $n$ vertices from bottom to top as represented in \cref{fig:bijections}, starting at $1$ and ending at $n-1$.  
To each blue nest $B \in B(\mathcal{N})$ in a 2-colored nesting $\mathcal{N}$ of a linear graph with $n$ vertices, we associate the \emph{characteristic vector} $\vec B\in \RR^n$ which has a $1$ in position $i$ if $i \in B$, $0$ in position $i$ if $i \notin B$ and 0 in position $n$.
To each union of nests $Q \in Q(\mathcal{N})$, we associate the characteristic vector $\vec Q \in \RR^n$ which has a $1$ in position $i$ if $i \in Q$, $0$ in position $i$ if $i \notin Q$ and 1 in position $n$. 
We denote moreover by $\vec n$ the vector $(1,\ldots,1) \in \RR^n$.

\begin{lemma} 
\label{lemma:normalcones}
The normal cone of the face of the Ardila--Doker realization of the multiplihedron labeled by the 2-colored nesting $\mathcal{N}$ is given by \[\cone\left(\{-\vec B\}_{B \in B(\mathcal{N})} \cup \{-\vec Q\}_{Q \in Q(\mathcal{N})} \cup \{\vec n, - \vec n\} \right) \ . \]
\end{lemma}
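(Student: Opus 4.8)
The plan is to read the normal cone directly off the generalized permutahedron presentation of the Ardila--Doker multiplihedron $\K_\omega\!\left(\tfrac12\right)\subset\RR^{n}$ coming from the $\tfrac12$-lifting, and then to match the tight inequalities with the combinatorial data of the $2$-colored nesting $\mathcal{N}$. Write $z$ for the submodular function of the Loday associahedron $\K_\omega$, whose facets are indexed by the nests of the linear graph $\gra$, and let $n$ denote the lifting coordinate, so that the edges of $\gra$ are the coordinates $1,\ldots,n-1$. Inside the hyperplane $\sum_{i=1}^{n}x_i=z_{E(\gra)}$, the lifting presents $\K_\omega\!\left(\tfrac12\right)$ by the two families
\[
\sum_{i\in I}x_i\geq \tfrac12 z_I \quad(\text{\emph{half} inequalities})\ ,\qquad \sum_{i\in I\cup\{n\}}x_i\geq z_I \quad(\text{\emph{full} inequalities})\ ,
\]
for $I\subseteq E(\gra)$. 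By the description of normal cones recalled just before \cref{thm:universalformula}, the normal cone of a face is the cone generated by the outward normals of the tight inequalities together with a basis $\{\vec n,-\vec n\}$ of the orthogonal complement of the affine hull; the outward normal of a tight half inequality is $-\vec I$ (with $0$ in position $n$) and that of a tight full inequality is $-\vec{(I\cup\{n\})}$ (with $1$ in position $n$). It therefore suffices to determine which half and full inequalities are tight on the face $F_\mathcal{N}$ labelled by $\mathcal{N}$.

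First I would settle the underlying uncolored statement. Forgetting the colors of $\mathcal{N}$ yields a nesting $\overline{\mathcal{N}}$ of $\gra$ labelling a face of $\K_\omega$, and by the nestohedron description of \cite{LA21} the inequality $\sum_{i\in I}x_i=z_I$ is tight on that face exactly when $I$ is a disjoint union of nests of $\overline{\mathcal{N}}$. Lifting this, the color of each nest dictates which of its two avatars survives: using the vertex description in the proof of \cref{prop:lifting}, where a coordinate is halved precisely at the blue vertices, a blue nest saturates its \emph{half} inequality while a red nest saturates its \emph{full} inequality, the purple nests sitting on the frontier between the two regions. Since characteristic vectors of disjoint nests add, the tight half inequalities contribute, as generators of the cone, exactly the single blue nests $B\in B(\mathcal{N})$, giving the family $\{-\vec B\}_{B\in B(\mathcal{N})}$.

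The heart of the argument, and the step I expect to be the main obstacle, is the full inequalities: the claim to prove is that $\sum_{i\in I\cup\{n\}}x_i=z_I$ is tight on $F_\mathcal{N}$ if and only if $I\in Q(\mathcal{N})$, that is, $I$ is a union of red nests to which all blue and all purple nests have been adjoined. Two features have to be extracted from the $2$-coloring axioms. On the one hand, the minimal tight full set must contain every blue and every purple nest: for a full inequality to be saturated the lifting coordinate sits at its frontier value, which forces in the whole region weakly below the cut, namely all blue nests (which by axiom (1) lie inside a blue or purple nest) and all purple nests (which form the cut itself). On the other hand, an arbitrary union of red nests may be freely adjoined, because red nests are pairwise compatible and lie above the frontier, so that submodularity degenerates to additivity $z_{I\cup R}=z_I+z_R$ and keeps the inequality tight; conversely axioms (1)--(2) prevent any other subset from saturating a full inequality. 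This produces precisely the family $\{-\vec Q\}_{Q\in Q(\mathcal{N})}$.

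Assembling the three pieces, the normal cone of $F_\mathcal{N}$ is generated by $\{-\vec B\}_{B\in B(\mathcal{N})}$, by $\{-\vec Q\}_{Q\in Q(\mathcal{N})}$, and by the lineality $\{\vec n,-\vec n\}$, which is the asserted formula. As a sanity check, the top cell $\mathcal{N}=\{E(\gra)\}$ is a single purple nest, so $B(\mathcal{N})=\emptyset$ and $Q(\mathcal{N})=\{E(\gra)\}$ with $\vec Q=\vec n$; the cone then collapses to the lineality $\RR\vec n$, as it must for a maximal face. The delicate point throughout is the bookkeeping of the interaction between the two inequality families under the lifting, and in particular the exact combinatorial equality of the tight full sets with $Q(\mathcal{N})$: the forced inclusion of all blue and purple nests is exactly where the asymmetry of the $\tfrac12$-lifting, half on the blue side and full on the red side, enters the proof.
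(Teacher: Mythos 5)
Your route is genuinely different from the paper's. The paper deduces the lemma from the fact that the normal fan of the Ardila--Doker realization coarsens the braid fan: each normal cone is a union of normal cones of faces of the permutahedron, and the relevant braid cones are read off from the explicit surjection of the permutahedron onto the multiplihedron written down in Doker's thesis. You instead work with the inequality description coming from the $\tfrac12$-lifting and try to identify which inequalities are tight on the face $F_{\mathcal{N}}$. This is a legitimate alternative: the normal cone of a face is indeed generated by the outward normals of the tight valid inequalities together with the lineality $\{\vec n,-\vec n\}$, and your dictionary --- tight half inequalities giving $\{-\vec B\}_{B\in B(\mathcal{N})}$, tight full inequalities giving $\{-\vec Q\}_{Q\in Q(\mathcal{N})}$ --- is the correct one (it checks out on the facets of types $(\B)$ and $(\T)$ and on low-dimensional faces of $\J_2$, $\J_3$).

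The problem is that the step you yourself single out as the heart of the argument --- the full inequality indexed by $I$ is tight on $F_{\mathcal{N}}$ exactly when $I\in Q(\mathcal{N})$ --- is asserted rather than proven, and the two justifications you offer do not hold up. First, ``the lifting coordinate sits at its frontier value, which forces in the whole region weakly below the cut'' is a heuristic, not an argument; making it precise is essentially the entire content of the lemma. Second, and more concretely, the claim that submodularity degenerates to additivity $z_{I\cup R}=z_I+z_R$ because red nests ``lie above the frontier'' is false: by the axioms of $2$-colored nestings a red nest typically \emph{contains} the purple and blue nests (for instance, in the face $\red{\purple{\bullet\bullet}\bullet}$ of $\J_3$ the red nest $\{1,2\}$ contains the purple nest $\{1\}$), so $R$ is not disjoint from the base $\bigcup B\cup\bigcup P$ and the additivity identity fails. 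The actual reason adjoining a red nest $R$ preserves tightness is that the complementary sum $\sum_{i\in R\setminus I}x_i$ is constant, equal to $z_{I\cup R}-z_I$, on $F_{\mathcal{N}}$; proving this (and the converse, that no other $I$ is tight) requires genuine work, e.g.\ via the product decomposition of faces in \cref{prop:PropertiesKLoday} or by passing through the braid cones as the paper does. So the skeleton of your proof is sound and the dictionary is right, but the central combinatorial identification of the tight full sets with $Q(\mathcal{N})$ is still an open gap in your write-up.
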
 

\begin{proof} 
This follows from the description of the Ardila--Doker multiplihedron as a generalized permutahedron: 
the normal cone of a face of the multiplihedron is a union of normal cones of faces of the permutahedron, and these faces can be easily determined from the projection from the permutahedron to the multiplihedron, written down explicitly in the proof of \cite[Theorem 3.3.6]{Doker11}.
\end{proof}

We are now ready to compute the cellular formula for the diagonal of the Forcey--Loday multiplihedra. We introduce \[ D(n)\coloneqq \{(I,J) \ | \ I,J\subset\{1,\ldots,n\}, |I|=|J|, I\cap J=\emptyset, \min(I\cup J)\in I \}. \]
We number again the edges of the linear graph with $n$ vertices from bottom to top, starting at $1$ and ending at $n-1$. 
Blue nests and unions of blue, purple and red nests can then in particular be seen as subsets of $\{1,\ldots,n-1\}$, hence of $\{1,\ldots,n\}$.

\begin{samepage}
\begin{theorem}
\label{thm:formuladiagonal}
The cellular image of the diagonal map $\triangle_n : \J_n \to \J_n \times \J_n$ introduced in \cref{def:diagonal-multipl-forcey-loday} admits the following description.
For $\mathcal{N}$ and $\mathcal{N}'$ two 2-colored nestings of the linear graph with $n$ vertices, we have that
\begin{eqnarray*}
  (\mathcal{N},\mathcal{N}') \in \Ima\triangle_n 
  & \iff & \forall (I,J) \in D(n), \\
  && \exists B \in B(\mathcal{N}), |B\cap I|>|B\cap J| \text{ or }  \\
  && \exists Q \in Q(\mathcal{N}), |(Q\cup \{n\}) \cap I|>| (Q\cup \{n\}) \cap J| \text{ or }  \\
  && \exists B' \in B(\mathcal{N}'), |B'\cap I|<|B'\cap J| \text{ or } \nonumber \\
  && \exists Q' \in Q(\mathcal{N}'), |(Q'\cup \{n\}) \cap I|<| (Q'\cup \{n\}) \cap J| \ .
\end{eqnarray*}
\end{theorem}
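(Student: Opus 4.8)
The plan is to transport the computation to the Ardila--Doker realization, which is a generalized permutahedron, and then to feed the explicit normal cones of \cref{lemma:normalcones} into the universal formula of \cref{prop:refinementofnormalfans}.

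First I would reduce the problem to the Ardila--Doker multiplihedron $A\coloneqq\K_n\!\left(\tfrac12\right)\subset\RR^n$. By \cref{prop:lifting} we have $\J_n=\pi(2A)$, and since scaling by $2$ and applying the affine isomorphism $\pi$ (restricted to the hyperplane containing $2A$) leaves the normal fan unchanged, $\pi$ induces a label-preserving bijection between the faces of $A$ and those of $\J_n$, both being indexed by $2$-colored nestings of the linear graph with $n$ vertices via \cref{lemma:bijection}. Choosing a good orientation vector $\vec v\in\RR^n$, \cref{prop:goodprojection} gives $\triangle_n=(\pi\times\pi)\triangle_{(A,\vec v)}$, so that $(\mathcal N,\mathcal N')\in\Ima\triangle_n$ if and only if the corresponding pair of faces of $A$ lies in $\Ima\triangle_{(A,\vec v)}$. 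It therefore suffices to compute the latter. Since the normal fan of the permutahedron $P\subset\RR^n$ refines that of $A$ and $\vec v$ orients $P$ positively, I can apply \cref{prop:refinementofnormalfans}: a pair of faces lies in the image if and only if, for every edge hyperplane $H\in\mathcal H_P$ with chosen normal $\vec d_H$ satisfying $\langle \vec d_H,\vec v\rangle>0$, some generator of $\mathcal N_A(F_{\mathcal N})$ pairs negatively with $\vec d_H$, or some generator of $\mathcal N_A(F_{\mathcal N'})$ pairs positively with it.

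Next I would identify $\mathcal H_P$ together with its distinguished normals with the set $D(n)$. As recalled in the proof of \cref{prop:goodprojection}, the edges of $P\cap\rho_zP$ point in the root directions $\vec d_{I,J}\coloneqq\sum_{i\in I}e_i-\sum_{j\in J}e_j$ for disjoint $I,J\subseteq[n]$ with $|I|=|J|$; hence $\mathcal H_P=\{\vec d_{I,J}^{\,\perp}\}$. For each such hyperplane I must pick the normal pairing positively with $\vec v$. Because $\vec v$ is a good orientation vector, the dominant term of $\langle\vec d_{I,J},\vec v\rangle$ is the one of smallest index $m=\min(I\cup J)$: from $v_i\geq 2v_{i+1}>0$ one gets $\sum_{k>m}v_k<2v_{m+1}\leq v_m$, so the sign of the pairing equals the sign of the contribution of $m$. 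Thus $\langle\vec d_{I,J},\vec v\rangle>0$ exactly when $m\in I$, which is precisely the defining condition of $D(n)$; choosing $\vec d_H=\vec d_{I,J}$ for $(I,J)\in D(n)$ therefore parametrizes the distinguished normals.

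Finally I would substitute the generators from \cref{lemma:normalcones}, namely $\{-\vec B\}_{B\in B(\mathcal N)}$, $\{-\vec Q\}_{Q\in Q(\mathcal N)}$ and $\{\vec n,-\vec n\}$. A direct computation gives $\langle \vec B,\vec d_{I,J}\rangle=|B\cap I|-|B\cap J|$, then $\langle \vec Q,\vec d_{I,J}\rangle=|(Q\cup\{n\})\cap I|-|(Q\cup\{n\})\cap J|$ since $\vec Q$ carries its extra $1$ in position $n$, and $\langle \vec n,\vec d_{I,J}\rangle=|I|-|J|=0$. The last identity shows that $\pm\vec n$ are irrelevant, and translating $\langle -\vec B,\vec d_{I,J}\rangle<0$ and $\langle -\vec Q,\vec d_{I,J}\rangle<0$ for $\mathcal N$, together with the mirror inequalities $\langle -\vec B',\vec d_{I,J}\rangle>0$ and $\langle -\vec Q',\vec d_{I,J}\rangle>0$ for $\mathcal N'$, into the cardinality conditions yields exactly the four disjunctive clauses of the statement, with $D(n)$ indexing the universal quantifier. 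The main obstacle will be the careful bookkeeping through $\pi$: one must verify that the face labels are preserved, so that the same $2$-colored nesting indexes corresponding faces of $A$ and $\J_n$, and that the extra coordinate $n$ — which carries the $1$ distinguishing $\vec Q$ from $\vec B$ and which is killed by $\pi$ — interacts correctly with $\vec v$. Establishing the sign equivalence $\langle\vec d_{I,J},\vec v\rangle>0\Leftrightarrow\min(I\cup J)\in I$, i.e. that a good orientation vector is fine enough to resolve every root direction while remaining in the single chamber defining $\triangle_n$, is the one genuinely quantitative point; everything else is substitution into \cref{prop:refinementofnormalfans}.
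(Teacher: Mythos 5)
Your proposal is correct and follows the same route as the paper's proof: reduce to the Ardila--Doker realization via \cref{prop:lifting} and \cref{prop:goodprojection}, apply \cref{prop:refinementofnormalfans} with the permutahedron as the refining polytope, and substitute the normal cone generators of \cref{lemma:normalcones} against the edge hyperplanes of the permutahedron. The only difference is that you unpack the identification of $\mathcal{H}_P$ with $D(n)$ and the sign computation $\langle\vec d_{I,J},\vec v\rangle>0\Leftrightarrow\min(I\cup J)\in I$, which the paper delegates to the cited computation of the fundamental hyperplane arrangement of the permutahedron in \cite{LA21}.
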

\end{samepage}

\begin{proof} 
The essential ingredient is the computation of the fundamental hyperplane arrangement of the permutahedron, which was done in \cite[Section 3.1]{LA21}. The result follows in three steps:
\begin{enumerate}[leftmargin=*]
\item Since a good orientation vector $\vec v$ is also a principal orientation vector \cite[Definition 3.15]{LA21}, it orients positively the permutahedron. 
\item Using \cref{prop:refinementofnormalfans} and the description of the normal cones of the faces of the multiplihedron in \cref{lemma:normalcones}, we get the above formula for the Ardila--Doker realizations of the multiplihedra. 
\item \cref{prop:goodprojection} garantees that this formula holds for the Forcey--Loday realizations, which completes the proof.
\end{enumerate}
\end{proof}

We now make this formula explicit in dimension 1, 2 and 3. 
We write 2-colored nestings of a linear graph with $n$ vertices as 2-colored parenthesizations of a word with $n$ symbols $\bullet$, which are easier to read and shorter to type, see \cref{fig:bijections}. 
For the sake of readability, we use brackets instead of parentheses for the purple nests in this representation.
We moreover only write pairs of faces $(F,G)$ such that $\dim F + \dim G = \dim P$.
\begin{equation*}
  \begin{matrix}
      \triangle_2(\purple{\bullet \bullet}) & = & \blue{\bullet \bullet} \times \purple{\bullet \bullet} \cup \purple{\bullet \bullet} \times \red{\bullet \bullet}
  \end{matrix}
\end{equation*}
\[ \resizebox{\hsize}{!}{$\displaystyle{ 
 \renewcommand*{\arraystretch}{1.5}
  \begin{matrix}
      \triangle_3(\purple{\bullet \bullet \bullet}) 
      & = & \blue{\blue{\bullet \bullet} \bullet} \times \purple{\bullet \bullet \bullet} 
      & \cup & \purple{\bullet \bullet \bullet} \times \red{\bullet \red{\bullet \bullet}}
      & \cup & \blue{\bullet \bullet \bullet} \times \purple{\bullet \blue{\bullet \bullet}} \\
      & \cup & \blue{\bullet \bullet \bullet} \times \red{\bullet \purple{\bullet \bullet}}  
      & \cup & \purple{\bullet \blue{\bullet \bullet}} \times \red{\bullet \purple{\bullet \bullet}} 
      & \cup & \purple{\blue{\bullet \bullet} \bullet} \times \red{\purple{\bullet \bullet} \bullet} \\
      & \cup & \purple{\blue{\bullet \bullet} \bullet} \times \red{\bullet \bullet \bullet} 
      & \cup & \red{\purple{\bullet \bullet} \bullet} \times \red{\bullet \bullet \bullet} 
  \end{matrix} }$} \]

\[ \resizebox{\hsize}{!}{$\displaystyle{ 
  \renewcommand*{\arraystretch}{1.5}
  \begin{matrix}
      & & & & \triangle_4(\purple{\bullet \bullet \bullet \bullet}) =  \\
      &  & \blue{\blue{\blue{\bullet \bullet}\bullet}\bullet} \times \purple{\bullet \bullet \bullet \bullet} 
      & \cup & \purple{\bullet \bullet \bullet \bullet} \times \red{\bullet \red{\bullet \red{\bullet \bullet}}}
      & \cup & \blue{\blue{\bullet \bullet \bullet}\bullet} \times \purple{\bullet \blue{\bullet \bullet}\bullet} \\ 
      & \cup & \red{\purple{\bullet \bullet}\purple{\bullet \bullet}} \times \red{\bullet \bullet\red{\bullet \bullet}} 
      & \cup & \blue{\blue{\bullet \bullet \bullet}\bullet} \times \purple{\bullet \blue{\bullet \bullet \bullet}} 
      & \cup & \red{\purple{\bullet \bullet}\bullet \bullet} \times \red{\bullet \bullet\red{\bullet \bullet}} \\
      & \cup & \blue{\bullet \blue{\bullet \bullet}\bullet} \times \purple{\bullet \blue{\bullet \bullet \bullet}} 
      & \cup & \red{\purple{\bullet \bullet \bullet}\bullet} \times \red{\bullet \red{\bullet \bullet}\bullet}
      & \cup & \blue{\blue{\bullet \bullet}\bullet \bullet} \times \purple{\bullet \bullet \blue{\bullet \bullet}} \\
      & \cup & \red{\purple{\bullet \bullet \bullet}\bullet} \times \red{\bullet \red{\bullet \bullet \bullet}} 
      & \cup & \purple{\blue{\blue{\bullet \bullet}\bullet}\bullet} \times \red{\purple{\bullet \bullet \bullet}\bullet}
      & \cup & \purple{\bullet \bullet\blue{\bullet \bullet}} \times \red{\bullet \red{\bullet \purple{\bullet \bullet}}} \\
      & \cup & \purple{\blue{\bullet \bullet}\blue{\bullet \bullet}} \times \red{\purple{\bullet \bullet}\purple{\bullet \bullet}}
      & \cup & \purple{\bullet \blue{\bullet \bullet}\bullet} \times \red{\bullet \red{\purple{\bullet \bullet}\bullet}}
      & \cup & \blue{\blue{\bullet \bullet}\bullet \bullet} \times \red{\purple{\bullet \bullet}\purple{\bullet \bullet}} \\
      & \cup & \purple{\bullet \blue{\bullet \bullet} \bullet} \times \red{\bullet \red{\bullet \bullet \bullet}}
      & \cup & \purple{\bullet \blue{\blue{\bullet \bullet}\bullet}} \times \red{\bullet \purple{\bullet \bullet\bullet}} 
      & \cup & \purple{\blue{\bullet \bullet}\bullet \bullet} \times \red{\purple{\bullet \bullet}\red{\bullet \bullet}}\\
      & \cup & \blue{\bullet \blue{\bullet \bullet} \bullet} \times \red{\bullet \purple{\bullet\bullet \bullet}}
      & \cup & \blue{\blue{\bullet \bullet \bullet}\bullet} \times \red{\bullet \purple{\bullet \bullet \bullet}}
      & \cup & \purple{\blue{\bullet \bullet}\bullet \bullet} \times \red{\bullet \bullet\red{\bullet \bullet}} \\
      & \cup & \red{\purple{\blue{\bullet \bullet}\bullet}\bullet} \times \red{\purple{\bullet \bullet}\bullet \bullet}
      & \cup & \purple{\bullet \blue{\bullet \bullet \bullet}} \times \red{\bullet \purple{\bullet \blue{\bullet \bullet}}}
      & \cup & \purple{\blue{\blue{\bullet \bullet}\bullet}\bullet} \times \red{\purple{\bullet \bullet}\bullet \bullet} \\
      & \cup & \purple{\bullet \blue{\bullet \bullet \bullet}} \times \red{\bullet \red{ \bullet \purple{\bullet \bullet}}} 
      & \cup & \red{\bullet \purple{\bullet \bullet}\bullet} \times \red{\bullet \red{\bullet \bullet \bullet}}
      & \cup & \red{\red{\purple{\bullet \bullet}\bullet }\bullet} \times \red{\bullet \bullet \bullet \bullet} \\
      & \cup & \blue{\bullet \bullet \bullet \bullet} \times \purple{\bullet \blue{\bullet\blue{\bullet \bullet}}} 
      & \cup &   \red{\purple{\blue{\bullet \bullet}\bullet}\bullet} \times \red{\bullet \bullet \bullet \bullet} 
      & \cup & \blue{\bullet \bullet \bullet \bullet} \times \red{\bullet \purple{\bullet \blue{\bullet \bullet}}} \\
      & \cup & \purple{\blue{\blue{\bullet \bullet}\bullet}\bullet} \times \red{\bullet \bullet \bullet \bullet} 
      & \cup & \blue{\bullet \bullet \bullet \bullet} \times \red{\bullet\red{\bullet\purple{\bullet \bullet}}} 
      & \cup & \red{\purple{\bullet \bullet}\blue{\bullet \bullet}} \times \red{\bullet \bullet \purple{\bullet \bullet}} \\
      & \cup & \purple{\blue{\bullet \bullet \bullet}\bullet} \times \red{\purple{\bullet \blue{\bullet \bullet}}\bullet}
      & \cup & \purple{\blue{\bullet \bullet}\blue{\bullet \bullet}} \times \red{\bullet \bullet \purple{\bullet \bullet}}
      & \cup & \purple{\blue{\bullet \bullet \bullet}\bullet} \times \red{\bullet \red{\purple{\bullet \bullet}\bullet}} \\ 
      & \cup & \purple{\blue{\bullet \bullet \bullet}\bullet} \times \red{\bullet \blue{\bullet \bullet}\bullet} 
      & \cup & \blue{\blue{\bullet \bullet}\bullet \bullet} \times \red{\bullet \bullet\purple{\bullet \bullet}}
      & \cup & \purple{\blue{\bullet \bullet \bullet}\bullet} \times \red{\bullet \red{\bullet \bullet \bullet}} \\
      & \cup & \red{\purple{\bullet \blue{\bullet \bullet}}\bullet} \times \red{\bullet \purple{\bullet \bullet}\bullet}
      & \cup & \red{\blue{\bullet \bullet \bullet}\bullet} \times \red{\bullet \purple{\bullet \bullet}\bullet}  
      & \cup & \purple{\blue{\blue{\bullet \bullet}\bullet}\bullet} \times \red{\bullet \purple{\bullet \bullet}\bullet} 
  \end{matrix} }$} \]
  
We also compute in \cref{table:numerology} the number of faces of complementary dimensions and the number of pairs of vertices in the cellular image of the diagonal of the multiplihedra in dimensions $0$ to $6$.
They are compared with the diagonals induced by the same orientation vector on the Loday associahedra and the permutahedra. 
The two sequences of numbers that we obtain did not appear before in \cite{OEIS}.

\medskip

\begin{figure}[h]
\resizebox{\hsize}{!}{$\displaystyle{
\begin{tabular}{c|c|rrrrrrr|l}
\textbf{Pairs $(F,G) \in \Ima\triangle_{(P,\vec v)}$} & \textbf{Polytopes} & \textbf{0} & \textbf{1} & \textbf{2} & \textbf{3} & \textbf{4} & \textbf{5} & \textbf{6} & \textbf{\cite{OEIS}} \\
\hline
& \text{Assoc.} & 1 & 2 & 6 & 22 & 91 & 408 & 1938 & \OEIS{A000139}  \\
$\dim F + \dim G= \dim P$  & \text{Multipl.} & 1 & 2 & 8 & 42 & 254 & 1678 & 11790 &  to appear \\
 & \text{Permut.} & 1 & 2 & 8 & 50 & 432 & 4802 & 65536 &  \OEIS{A007334} \\
\hline
 & \text{Assoc.} & 1 & 3 & 13 & 68 & 399 & 2530 & 16965 &  \OEIS{A000260} \\
 $\dim F=\dim G=0$ & \text{Multipl.} & 1 & 3 & 17 & 122 & 992 & 8721 & 80920 & to appear \\
 & \text{Permut.} & 1 & 3 & 17 & 149 & 1809 & 28399 & 550297 &  \OEIS{A213507} 
\end{tabular}}$}
\caption{Number of pairs of faces in the cellular image of the diagonal of the associahedra, multiplihedra and permutahedra of dimension $0\leq \dim P \leq 6$, induced by any good orientation vector.}
\label{table:numerology}
\end{figure}

\subsection{About the cellular formula} 
\label{ss:about}

Given a face $F$ of a positively oriented polytope $(P, \vec v)$, the orientation vector $\vec v$ defines a unique vertex $\tp F$ (resp. $\bm F$) which maximizes (resp. minimizes) the scalar product $\langle - , \vec v \rangle$ over $F$. 
By \cite[Proposition 1.17]{LA21}, any pair of faces $(F,G) \in \Ima \triangle_{(P,\vec v)}$ satisfies $\tp F \leq \bm G$. 
In the case of the simplices, the cubes and the associahedra, the converse also holds: the image of the diagonal is given by the "magical formula" 
\begin{align}
\label{eq:magical-formula}
(F,G) \in \Ima \triangle_n \iff \tp F\leq \bm G \ . 
\end{align}
This formula, however, does not hold for the diagonal of the Forcey--Loday multiplihedra. 

\begin{example} \label{prop:pas-top-bot}
The diagonal on the multiplihedron $\J_4$ is such that
\[ \Ima \triangle_4 \subsetneq \{ (F,G) , \ \tp F\leq \bm G \} \ . \]
Indeed, the pairs of faces $(F,G)$ that satisfy $\dim F + \dim G = 3$ and $\tp F\leq \bm G$ include the four pairs 
\begin{equation} \label{eq:quatre-paires-inclues}
\begin{matrix}
\purple{\blue{\bullet \bullet \bullet}\bullet} \times \red{\purple{\bullet\blue{\bullet\bullet}}\bullet} & 
\red{\blue{\bullet\bullet\bullet}\bullet} \times \red{\bullet\purple{\bullet\bullet}\bullet}  \\
\purple{\blue{\bullet\bullet\bullet}\bullet} \times \red{\bullet\blue{\bullet\bullet}\bullet} &
\red{\purple{\bullet\blue{\bullet\bullet}}\bullet} \times \red{\bullet\purple{\bullet\bullet}\bullet}  
\end{matrix}
\end{equation}
and the four pairs
\begin{equation} \label{eq:quatre-paires-exclues}
\begin{matrix}
\purple{\blue{\bullet \bullet \bullet}\bullet} \times \red{\red{\bullet\purple{\bullet\bullet}}\bullet} & 
\blue{\blue{\bullet\bullet\bullet}\bullet} \times \red{\bullet\purple{\bullet\bullet}\bullet}  \\
\purple{\blue{\bullet\bullet\bullet}\bullet} \times \red{\bullet\red{\bullet\bullet}\bullet} &
\purple{\blue{\bullet\blue{\bullet\bullet}}\bullet} \times \red{\bullet\purple{\bullet\bullet}\bullet}  \ .
\end{matrix}
\end{equation}
While the image $\Ima \triangle_4$ contains the four pairs in (\ref{eq:quatre-paires-inclues}), it does \emph{not} include the four pairs in (\ref{eq:quatre-paires-exclues}), as can be checked directly from \cref{thm:formuladiagonal}.
\end{example}

\begin{remark}
We point out that Formula (\ref{eq:magical-formula}) also does not hold neither for the permutahedra nor the operahedra in general, as proven in \cite[Section 3.2]{LA21}. 
\end{remark}

The diagonal $\triangle_n$ being a section of the projection $\pi : \J_n \times \J_n \to \J_n , (x,y) \mapsto (x+y)/2$ \cite[Proposition 1.1]{LA21}, one can in fact represent its cellular image by projecting it to $\J_n$: for each pair of faces $(F,G) \in \Ima \triangle_n$, one draws the polytope $(F+G)/2$ in $\J_n$. This defines a polytopal subdivision of $\J_n$. The polytopal subdivision of $\J_3$ can be found in \cite[Figure 3]{LA21}, while the polytopal subdivision of $\J_4$ is illustrated on the first page of this article.

\cref{prop:pas-top-bot} can then be illustrated geometrically as follows. 
There are two distinct diagonals on $\J_4$ which agree with the Tamari-type order on the vertices. 
The first one, corresponding to the diagonal defined in this paper, is induced by the choice of any orientation vector $\vec v=(v_1,v_2,v_3,v_4)$ satisfying $v_1>v_2>v_3>v_4$ and $v_1 + v_4 > v_2+v_3$ (here we work with the Ardila--Doker realization of the multiplihedron).
Changing the last condition to $v_1 + v_4 < v_2+v_3$ gives the second choice of diagonal, which is in fact exactly the diagonal of Saneblidze--Umble \cite[Section 5]{SaneblidzeUmble04}. 
These two diagonals on $\J_4$ then differ by four pairs of faces, as represented in~\cref{fig:four-pairs}: the first diagonal includes the pairs of~(\ref{eq:quatre-paires-inclues}), while the second diagonal includes the pairs of~(\ref{eq:quatre-paires-exclues}).
Under the projection $\pi : \J_4 \times \J_4 \to \J_4, (x,y) \mapsto (x+y)/2$, these two families of faces induce two distinct polytopal subdivisions of the same "diamond" inside $\J_4$, represented in \cref{fig:diamonds}. 
We also refer to the last paragraph of \cref{ss:diagonals} for an algebraic counterpart of \cref{prop:pas-top-bot}.

\begin{remark}
  The two previous families of orientation vectors correspond to two adjacent chambers in the fundamental hyperplane arrangement of the permutahedron \cite[Theorem 3.6]{LA21}, separated by the hyperplane $x_1+x_4=x_2+x_3$, pictured in blue in \cite[Figure 12]{LA21}.
  A way to relate the diagonal constructed in this article to the diagonal of \cite[Section 5]{SaneblidzeUmble04} would possibly be to find further choices of chambers in the fundamental hyperplane arrangements of the permutahedra (or the multiplihedra) in all dimensions $n \geq 4$ recovering the latter diagonal, see also \cite[Remark~3.19]{LA21}.
\end{remark}

\begin{figure}[h]
\resizebox{0.7\linewidth}{!}{
\begin{tikzpicture}[scale=0.5, J4]

\draw (1,2,3) node {$\bullet$}; 
\draw (6,4,2) node {$\bullet$}; 

\draw[thick, opacity=0.2] (1,4,1)--(1,2,3)--(2,1,3)--(3,1,2)--(3,2,1)--cycle;
\draw[thick, opacity=0.2] (2,8,2)--(2,4,6)--(4,2,6)--(6,2,4)--(6,4,2)--cycle;
\draw[thick, opacity=0.2] (4,1,6)--(6,1,4)--(6,1,2)--(6,2,1)--(6,4,1)--(2,8,1)--(1,8,1)--(1,8,2)--(1,4,6)--(1,2,6)--(2,1,6)--cycle;
\draw[thick, opacity=0.2] (4,1,6)--(4,2,6);
\draw[thick, opacity=0.2] (6,1,2)--(3,1,2);
\draw[thick, opacity=0.2] (6,2,1)--(3,2,1);
\draw[thick, opacity=0.2] (6,4,1)--(6,4,2);
\draw[thick, opacity=0.2] (2,8,1)--(2,8,2)--(1,8,2);
\draw[thick, opacity=0.2] (1,8,1)--(1,4,1);
\draw[thick, opacity=0.2] (1,4,6)--(2,4,6);

\draw[very thick, blue] (4,1,6)--(6,1,4);
\draw[very thick, blue] (6,1,4)--(6,2,4);

\draw[very thick, blue] (1,2,6)--(1,2,3);
\draw[very thick, blue] (1,2,6)--(2,1,6);
\draw[very thick, blue] (2,1,6)--(2,1,3);
\draw[very thick, blue] (1,2,3)--(2,1,3);
\draw[fill=blue, opacity=0.12] (1,2,3)--(2,1,3)--(2,1,6)--(1,2,6)--cycle;
\end{tikzpicture}

\begin{tikzpicture}[scale=0.5, J4]

\draw[thick, opacity=0.2] (1,4,1)--(1,2,3)--(2,1,3)--(3,1,2)--(3,2,1)--cycle;
\draw[thick, opacity=0.2] (2,8,2)--(2,4,6)--(4,2,6)--(6,2,4)--(6,4,2)--cycle;
\draw[thick, opacity=0.2] (4,1,6)--(6,1,4)--(6,1,2)--(6,2,1)--(6,4,1)--(2,8,1)--(1,8,1)--(1,8,2)--(1,4,6)--(1,2,6)--(2,1,6)--cycle;
\draw[thick, opacity=0.2] (4,1,6)--(4,2,6);
\draw[thick, opacity=0.2] (6,1,2)--(3,1,2);
\draw[thick, opacity=0.2] (6,2,1)--(3,2,1);
\draw[thick, opacity=0.2] (6,4,1)--(6,4,2);
\draw[thick, opacity=0.2] (2,8,1)--(2,8,2)--(1,8,2);
\draw[thick, opacity=0.2] (1,8,1)--(1,4,1);
\draw[thick, opacity=0.2] (1,4,6)--(2,4,6);
\draw[thick, opacity=0.2] (2,1,6)--(2,1,3);
\draw[thick, opacity=0.2] (1,2,3)--(2,1,3);

\draw[very thick, blue] (4,1,6)--(6,1,4);
\draw[very thick, blue] (6,1,4)--(6,2,4);
\draw[very thick, blue] (4,1,6)--(4,2,6);
\draw[very thick, blue] (4,2,6)--(6,2,4);
\draw[fill=blue, opacity=0.12] (4,1,6)--(6,1,4)--(6,2,4)--(4,2,6)--cycle;

\draw[very thick, blue] (1,2,6)--(1,2,3);
\draw[very thick, blue] (1,2,6)--(2,1,6);
\end{tikzpicture}}
\[\]
\resizebox{0.7\linewidth}{!}{
\begin{tikzpicture}[scale=0.5, J4]

\draw[thick, opacity=0.2] (1,4,1)--(1,2,3)--(2,1,3)--(3,1,2)--(3,2,1)--cycle;
\draw[thick, opacity=0.2] (2,8,2)--(2,4,6)--(4,2,6)--(6,2,4)--(6,4,2)--cycle;
\draw[thick, opacity=0.2] (4,1,6)--(6,1,4)--(6,1,2)--(6,2,1)--(6,4,1)--(2,8,1)--(1,8,1)--(1,8,2)--(1,4,6)--(1,2,6)--(2,1,6)--cycle;
\draw[thick, opacity=0.2] (4,1,6)--(4,2,6);
\draw[thick, opacity=0.2] (6,1,2)--(3,1,2);
\draw[thick, opacity=0.2] (6,2,1)--(3,2,1);
\draw[thick, opacity=0.2] (6,4,1)--(6,4,2);
\draw[thick, opacity=0.2] (2,8,1)--(2,8,2)--(1,8,2);
\draw[thick, opacity=0.2] (1,8,1)--(1,4,1);
\draw[thick, opacity=0.2] (1,4,6)--(2,4,6);
\draw[thick, opacity=0.2] (6,1,4)--(6,2,4);

\draw[very thick, red] (4,1,6)--(4,2,6);
\draw[very thick, red] (4,2,6)--(6,2,4);

\draw[very thick, red] (1,2,6)--(1,2,3);
\draw[very thick, red] (1,2,6)--(2,1,6);
\draw[very thick, red] (2,1,6)--(2,1,3);
\draw[very thick, red] (1,2,3)--(2,1,3);
\draw[fill=red, opacity=0.12] (1,2,3)--(2,1,3)--(2,1,6)--(1,2,6)--cycle;
\end{tikzpicture}

\begin{tikzpicture}[scale=0.5, J4]

\draw[thick, opacity=0.2] (1,4,1)--(1,2,3)--(2,1,3)--(3,1,2)--(3,2,1)--cycle;
\draw[thick, opacity=0.2] (2,8,2)--(2,4,6)--(4,2,6)--(6,2,4)--(6,4,2)--cycle;
\draw[thick, opacity=0.2] (4,1,6)--(6,1,4)--(6,1,2)--(6,2,1)--(6,4,1)--(2,8,1)--(1,8,1)--(1,8,2)--(1,4,6)--(1,2,6)--(2,1,6)--cycle;
\draw[thick, opacity=0.2] (4,1,6)--(4,2,6);
\draw[thick, opacity=0.2] (6,1,2)--(3,1,2);
\draw[thick, opacity=0.2] (6,2,1)--(3,2,1);
\draw[thick, opacity=0.2] (6,4,1)--(6,4,2);
\draw[thick, opacity=0.2] (2,8,1)--(2,8,2)--(1,8,2);
\draw[thick, opacity=0.2] (1,8,1)--(1,4,1);
\draw[thick, opacity=0.2] (1,4,6)--(2,4,6);
\draw[thick, opacity=0.2] (2,1,6)--(2,1,3);
\draw[thick, opacity=0.2] (1,2,3)--(2,1,3);
\draw[thick, opacity=0.2] (1,2,6)--(1,2,3);
\draw[thick, opacity=0.2] (1,2,6)--(2,1,6);

\draw[very thick, red, -] (4,1,6)--(6,1,4);
\draw[very thick, red, -] (6,1,4)--(6,2,4);
\draw[very thick, red] (4,1,6)--(4,2,6);
\draw[very thick, red, -] (4,2,6)--(6,2,4);
\draw[fill=red, opacity=0.12] (4,1,6)--(6,1,4)--(6,2,4)--(4,2,6)--cycle;

\draw[very thick, red, -] (2,1,3)--(1,2,3);
\draw[very thick, red] (2,1,3)--(2,1,6);
\end{tikzpicture}}
\caption{The four pairs of~(\ref{eq:quatre-paires-inclues}) represented in blue on the two top copies of $\J_4$ and the four pairs of~(\ref{eq:quatre-paires-exclues}) represented in red on the two bottom copies of $\J_4$. 
The minimal (top right) and maximal (bottom left) vertices for the Tamari-type order are drawn in black, in the top left copy.}
\label{fig:four-pairs}
\end{figure}
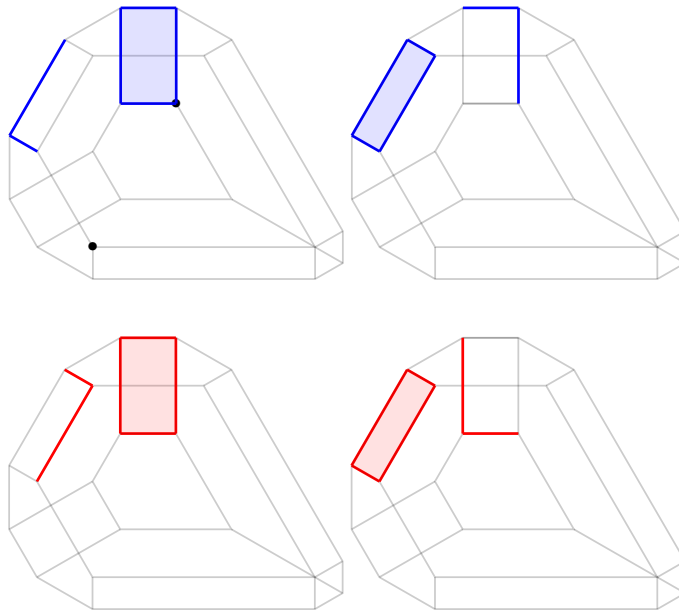

\begin{figure}[h!]
\centering
\begin{subfigure}{0.4\textwidth}
    \centering
        \includegraphics[width=0.6\linewidth]{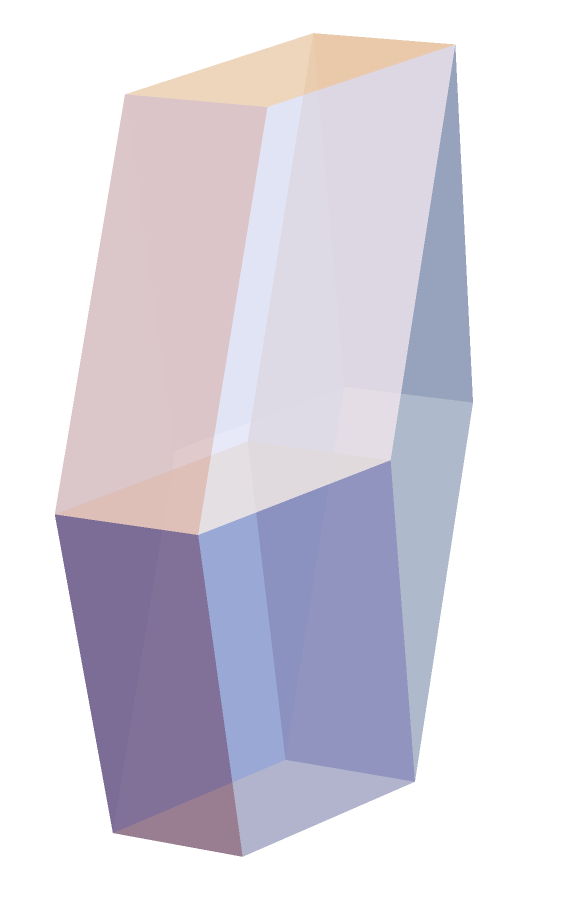} 
    \end{subfigure} ~
    \begin{subfigure}{0.4\textwidth}
    \centering
       \includegraphics[width=0.6\linewidth]{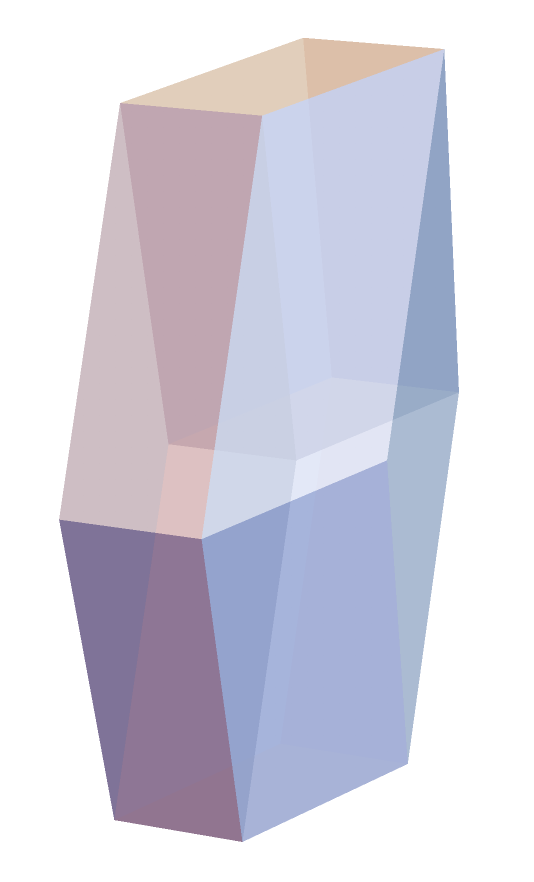}
    \end{subfigure}
\caption{The two distinct subdivisions of the same "diamond" in $\J_4$, respectively induced by sets of pairs~(\ref{eq:quatre-paires-inclues}) and~(\ref{eq:quatre-paires-exclues}) from \cref{prop:pas-top-bot}.}
\label{fig:diamonds}
\end{figure}


\section{Tensor product of \Ainf -morphisms and \Ainf -functors}
\label{sec:IV}

We begin by proving that for a certain choice of cellular orientation, the cellular chains functor maps the Loday associahedra to the operad \Ainf\ encoding \Ainf -algebras and maps the Forcey--Loday multiplihedra to the operadic bimodule \Minf\ encoding \Ainf -morphisms between them.
It then maps the respective geometric diagonals to algebraic ones, which can be used to define compatible tensor products of $\Ainf$-algebras and $\Ainf$-morphisms (with signs). 
Tensor product of \Ainf -categories and \Ainf -functors are defined in a similar fashion, and we relate them to the different notions of \Ainf -categories with identities.
We finally study coassociativity, cocommutativity and compatibility with composition of \Ainf -morphisms for these diagonals. 
We show that these properties are always satisfied up to homotopy, hinting at the idea that the category $\infAalg$ should possess some kind of \textit{homotopy} symmetric monoidal structure.

\subsection{\Ainf -algebras and \Ainf -morphisms} \label{ss:ainf-alg-ainf-morph}

\subsubsection{Definitions}

We work in the rest of this article with homological convention. 
We will refer to chain complexes as \emph{dg modules}, where the abbreviation dg stands for "differential graded", and their differential will always have degree $-1$.
For a dg module $(A,\partial)$, we endow the graded module $\Hom(A^{\otimes n}, A)$ with the differential $[\partial,f] := \partial f - (-1)^{|f|}f\partial$.

\begin{definition}[$\mathrm{A}_\infty$-algebra] \label{def:ainf-alg} An \emph{$\mathrm{A}_\infty$-algebra} is the data of a dg module $(A,\partial)$ together with operations \[ m_n : A^{\otimes n} \to A \ , \ n \geq 2 \] of degree $|m_n|=n-2$, satisfying the equations 
\[ [ \partial , m_n ] = - \sum_{\substack{p+q+r=n \\ 2 \leq q \leq n-1}} (-1)^{p+qr}m_{p+1+r}(\id^{\otimes p} \otimes m_q \otimes \id^{\otimes r}) \ , \ n\geq 2 \ . \]
\end{definition}

\begin{definition}[$\mathrm{A}_\infty$-morphism] \label{def:ainf-morph}
An \emph{$\mathrm{A}_\infty$-morphism} $F : A\rightsquigarrow B$ between two $\mathrm{A}_\infty$-algebras $(A,\{m_n\})$ and $(B,\{m_n'\})$ is a family of linear maps \[f_n : A^{\otimes n} \to B \ , \ n \geq 1\] of degree $|f_n|=n-1$, satisfying the equations  \[
 [ \partial , f_n] =  \sum_{\substack{p+q+r=n \\ q \geq 2}} (-1)^{p+qr}f_{p+1+r}(\id^{\otimes p} \otimes m_q \otimes \id^{\otimes r}) \ -  \sum_{\substack{i_1+\cdots+i_k=n \\ k \geq 2}} (-1)^{\varepsilon} m_k'(f_{i_1}\otimes\cdots\otimes f_{i_k})  \ , \] for $n \geqslant 1$, where $\varepsilon = \sum_{u=1}^{k}(k-u)(1-i_u)$.
\end{definition}

For three $\Ainf$-algebras $A$, $B$, $C$ and two $\Ainf$-morphisms $F : A \rightsquigarrow B$, $B \rightsquigarrow C$, their composition $G \circ F : A \rightsquigarrow C$ is the $\Ainf$-morphism whose operation of arity $n$ is given by the formula
\[ (G \circ F)_n := \sum_{i_1+\cdots+i_k=n} (-1)^{\varepsilon} g_k(f_{i_1}\otimes\cdots\otimes f_{i_k})  \ . \]
This composition is associative. We moreover point out that a standard \textit{dg (associative) algebra} can be defined as an \Ainf -algebra whose higher operations $m_n$ vanish for $n \geq 3$. For more details on these notions, we refer to \cite[Chapter 9]{LodayVallette12}. 

\begin{definition}
We denote by $\infAalg$ the category of $\Ainf$-algebras with $\Ainf$-morphisms.
\end{definition}

Representing the operations $m_n$ as corollae \arbreop{0.15} of arity $n$, the equations of \cref{def:ainf-alg} read as
\begin{equation}
    [ \partial , \arbreop{0.15} ] = - \sum_{\substack{p+q+r=n \\ 2 \leq q \leq n-1}} (-1)^{p+qr} \eqainf   \ .  \label{eq:ainf-alg}
\end{equation} 
Representing the operations $m_n$ in blue \arbreopbleu{0.15}, the operations $m'_n$ in red \arbreoprouge{0.15} and the operations $f_n$ by \arbreopmorph{0.15}, the equations of \cref{def:ainf-morph} can be rewritten as
\begin{align} 
[ \partial , \arbreopmorph{0.15} ] = \sum_{\substack{p+q+r=n \\ q \geq 2}} (-1)^{p+qr} \eqainfmorphun \ -  \sum_{\substack{i_1+\cdots+i_k=n \\ k \geq 2}} (-1)^{\varepsilon} \eqainfmorphdeux \ .  \label{eq:ainf-morph}
\end{align}
Finally, representing the operations $f_n$ by \arbreopmorphcompun\ and the operations $g_n$ by \arbreopmorphcompdeux, the formula for the composition of \Ainf -morphisms reads as
\begin{align}
     \sum_{i_1+\cdots+i_k=n} (-1)^{\varepsilon} \compainf \ .  \label{eq:ainf-comp}
\end{align}

\subsubsection{The operad \Ainf\ and the operadic bimodule \Minf} \label{sss:operad-ainf-operadic-bimod-minf}

\begin{definition}[Operad \Ainf]
The \emph{operad \Ainf} is the quasi-free dg operad generated in arity $n \geq 2$ by one operation $\arbreop{0.15}$ of degree $n-2$ 
\[ \Ainf := \left( \mathcal{T}( \arbreopdeux , \arbreoptrois, \arbreopquatre , \cdots ) , \partial \right) \ , \]
and whose differential is defined by Equations (\ref{eq:ainf-alg}).
\end{definition}

\begin{definition}[Operadic bimodule \Minf]
The operadic bimodule \Minf\ is the quasi-free $(\Ainf ,\Ainf )$-operadic bimodule generated in arity $n \geq 1$ by one operation $\arbreopmorph{0.15}$ of degree $n-1$ 
\[ \Minf :=  \left( \mathcal{T}^{\Ainf , \Ainf}(\arbreopunmorph , \arbreopdeuxmorph , \arbreoptroismorph , \arbreopquatremorph , \cdots ) , \partial \right) \ , \]
and whose differential is defined by Equations (\ref{eq:ainf-morph}).
\end{definition}

We denote by $\End_A$ the \textit{endomorphism operad} of a dg module $A$, i.e. the operad whose dg module of operations of arity $n$ is $\End_A(n) := \Hom (A^{\otimes n},A)$. 
An \Ainf -algebra structure on $A$ is then equivalent to the datum of a morphism of operads $\Ainf \rightarrow \End_A$. 
We denote similarly by $\Hom^A_B$ the $(\End_B , \End_A)$-operadic bimodule defined by $ \Hom^A_B(n) := \Hom (A^{\otimes n},B)$. 
An \Ainf -morphism between two \Ainf -algebras $A$ and $B$ is then equivalent to the datum of a morphism of operadic bimodules $\Minf \rightarrow \Hom^A_B$.

Composition of \Ainf -morphisms can also be formulated at the level of the operadic bimodule \Minf\ as a morphism of $(\Ainf , \Ainf)$-operadic bimodules $\Minf \rightarrow \Minf \circ_{\Ainf} \Minf$, where the notation $\circ_{\Ainf}$ denotes the \emph{relative composite product} \cite[Section 11.2.1]{LodayVallette12}.
We write the first factor of $\Minf \circ_{\Ainf} \Minf$ using green for the color above the diaphragm and red for the color below the diaphragm,
\[ \Minf :=  \mathcal{T}^{\Ainf , \Ainf}(\arbreopunmorphcompdeux , \arbreopdeuxmorphcompdeux , \arbreoptroismorphcompdeux , \arbreopquatremorphcompdeux , \cdots ) \ , \]
and its second factor using blue for the color above the diaphragm and green for the color below the diaphragm
\[ \Minf :=  \mathcal{T}^{\Ainf , \Ainf}(\arbreopunmorphcompun , \arbreopdeuxmorphcompun , \arbreoptroismorphcompun , \arbreopquatremorphcompun , \cdots ) \ . \]

\begin{definition}[Composition morphism]
The \emph{composition morphism} is defined to be the morphism of $(\Ainf ,\Ainf )$-operadic bimodules $\comp : \Minf \rightarrow \Minf \circ_{\Ainf} \Minf$ given on the generating operations of \Minf\ by 
\[ \comp \left( \arbreopmorph{0.15}  \right) =  \sum_{i_1+\cdots+i_k=n} (-1)^{\varepsilon} \compainf \ . \]
\end{definition}

\noindent The composition of two \Ainf -morphisms $A \rightsquigarrow B$ and $B \rightsquigarrow C$ is then equivalent to the following composition of morphisms of operadic bimodules
\[ \Minf \overset{\comp}{\longrightarrow} \Minf \circ_{\Ainf} \Minf \longrightarrow \Hom^B_C \circ_{\End_B} \Hom^A_B \longrightarrow \Hom^A_C \ . \]

\subsubsection{The Forcey--Loday multiplihedra realize the operadic bimodule \Minf} \label{sss:forcey--loday-realize}

\begin{definition}[Cellular orientation] 
\leavevmode
Let $P\subset\RR^n$ be a polytope, and let $F$ be a face of $P$. A \emph{cellular orientation of $F$} is a choice of orientation of its linear span. A \emph{cellular orientation of $P$} is a choice of cellular orientation for each face $F$ of $P$. 
\end{definition}

We respectively denote by $\mathsf{CW}$ and $\mathsf{dg-mod}$ the symmetric monoidal categories of CW complexes and of dg modules over $\mathbb{Z}$, and by $C_\bullet^{\mathrm{cell}} : \mathsf{CW} \rightarrow \mathsf{dg-mod}$ the cellular chains functor. 
A choice of a cellular orientation for every polytope $P \in \mathsf{Poly}$ defines an inclusion $\mathsf{Poly} \subset \mathsf{CW}$. 
Then, the strong symmetric monoidal functor $C_\bullet^{\mathrm{cell}}$ respectively sends operads and operadic bimodules in polytopes to dg operads and dg operadic bimodules. 

\begin{definition}[Left-levelwise order] \label{def:left-levelwise-tree}
Let $t$ be a (2-colored) tree $t$. The \emph{left-levelwise order} on the vertices of $t$ is defined by ordering them from bottom to top and from left to right, proceeding one level at a time.
\end{definition}

\begin{figure}[h!]
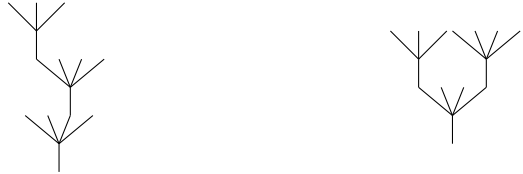


\centering

\begin{subfigure}{0.4\textwidth}
\centering
\exampleleftlevelwiseone
\end{subfigure}
\begin{subfigure}{0.4\textwidth}
\centering
\exampleleftlevelwisetwo
\end{subfigure}

\caption{The tree on the left decomposes as $(c_4\circ_3 c_4)\circ_3 c_3$ and the orientation on the face it labels is determined by the product $K_4 \times K_4 \times K_3$. 
The tree on the right decomposes as $ (c_4\circ_1 c_3)\circ_6 c_4$ and defines the orientation determined by the product $K_4 \times K_3 \times K_4$.} 
\label{fig:left-levelwise-order}
\end{figure}

Given a tree $t$, there is a unique decomposition $t=(\cdots ((c_{n_1} \circ_{i_1} c_{n_2})\circ_{i_2}c_{n_3})\cdots \circ_{i_k} c_{n_{k+1}})$ where the corollae $c_n$ are grafted according to this total order. Using the grafting operations defined in \cref{sss:grafting}, a 2-colored tree admits similarly a unique decomposition as a sequence of blue corollae, red corollae and 2-colored corollae ordered according to this total order. 
We can then make the same choices of cellular orientations as in \cite[Section 1.4]{mazuir-I}, illustrated in \cref{fig:left-levelwise-order} :
\begin{itemize}
  \item For the Loday associahedra $\K_n \subset \RR^{n-1}$ of \cite{MTTV19}, we choose the basis $\{e_1 - e_{j+1}\}_{1\leq j \leq n-2}$ as positively oriented basis of the top dimensional cell $\arbreop{0.15}$. We then choose the orientation of any other face $t$ of $\K_n$ to be the image of the positively oriented bases of the top cells of the polytopes $\K_{n_i}$ under the sequence of partial compositions following the left-levelwise order on $t$. 
  \item We choose the basis $\{- e_j\}_{1\leq j \leq n-1}$ as positively oriented basis of the top dimensional cell $\arbreopmorph{0.15}$ of the Forcey--Loday multiplihedra $\J_n \subset \RR^{n-1}$. We then choose the orientation of any other face $t$ of $\J_n$ to be the image of the positively oriented bases of the top cells of the polytopes $\K_{n_i}$ and $\J_{n_j}$ under the sequence of action-compositions maps, following the left-levelwise order on $t$.
\end{itemize}

\begin{proposition} 
\label{prop:cellular-chains}
These cellular orientations on the Loday associahedra and the Forcey--Loday multiplihedra provide an isomorphism of dg operads $C_\bullet^{\mathrm{cell}}(\{\K_n\})\cong \mathrm{A}_\infty$ and an isomorphism of dg operadic bimodules $C_\bullet^{\mathrm{cell}}(\{\J_n\})\cong \Minf$. 
\end{proposition}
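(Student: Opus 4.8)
The plan is to prove both isomorphisms simultaneously, by first matching generators, then the underlying graded structures, and finally the differentials. Since $C_\bullet^{\mathrm{cell}}$ is strong symmetric monoidal, it carries the operad $\{\K_n\}$ and the operadic bimodule $\{\J_n\}$ of \cref{thm:MainOperad} to a dg operad and a dg operadic bimodule over it, and turns the action-composition maps of \cref{def:action-composition} into the corresponding algebraic structure maps; in particular the bimodule isomorphism will automatically be compatible with the operad isomorphism used to identify the acting operads. First I would identify generators. By \cref{prop:PropertiesKLoday}(3) and its associahedral analogue, the cells of $\K_n$ (resp.\ $\J_n$) are indexed by planar trees (resp.\ 2-colored trees) with $n$ leaves; Point~(4) of \cref{prop:PropertiesKLoday}, iterated together with \cref{def:action-composition}, shows that every face is the image of a product of top-dimensional cells under the maps $\Theta_{p,q,r}$ and $\Theta^{i_1,\ldots,i_k}$. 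Hence $C_\bullet^{\mathrm{cell}}(\{\K_n\})$ is generated as an operad by the fundamental classes of the top cells $\arbreop{0.15}$, and $C_\bullet^{\mathrm{cell}}(\{\J_n\})$ is generated as an operadic bimodule by the fundamental classes of the top cells $\arbreopmorph{0.15}$. The top cell of $\K_n$ has dimension $n-2=|m_n|$ and that of $\J_n$ has dimension $n-1=|f_n|$, so sending these classes to $m_n$ and $f_n$ respects degrees.

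Next I would upgrade this to an isomorphism of the underlying graded structures. Because the cells of $\K_n$ (resp.\ $\J_n$) are in bijection with planar trees (resp.\ 2-colored trees), and these trees index precisely a basis of the free operad underlying $\Ainf$ (resp.\ the free $(\Ainf,\Ainf)$-operadic bimodule underlying $\Minf$), the assignment on generators extends uniquely to an isomorphism of graded operads and of graded operadic bimodules. No spurious relations can appear, since the target structures are quasi-free and the face posets of $\K_n$ and $\J_n$ carry no identifications beyond the grafting of trees encoded by the operadic and action-composition maps.

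The heart of the argument is the matching of differentials, for which I would compute the cellular boundary of each generator. The boundary $\partial$ of the top cell of $\K_n$ is the signed sum of its codimension-one faces, which correspond to the two-vertex trees $c_{p+1+r}\circ_{p+1}c_q$ with $2\le q\le n-1$; under the operad structure each such facet is, up to sign, the corresponding composite of top cells, and these facets exhaust the quadratic terms of Equation~(\ref{eq:ainf-alg}). Likewise, the facets of $\J_n$ are, by \cref{prop:PropertiesKLoday}(1), exactly the trees of type~$(\B)$, namely $c_{p+1+r}\circ_{p+1}c^{\T}_q$, and of type~$(\T)$, namely $c^{\B}_k(c_1,\ldots,c_k)$. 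The type~$(\B)$ facets reproduce the terms $f_{p+1+r}(\id^{\otimes p}\otimes m_q\otimes\id^{\otimes r})$ arising from the left $\Ainf$-action, while the type~$(\T)$ facets reproduce the terms $m'_k(f_{i_1}\otimes\cdots\otimes f_{i_k})$ arising from the right action, so that $\partial$ realizes the two sums of Equation~(\ref{eq:ainf-morph}).

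The main obstacle is the verification of signs. The cellular boundary carries orientation-dependent signs, and I must check that, with the left-levelwise orientations of \cref{def:left-levelwise-tree} fixed on every cell, the induced orientation on each facet yields exactly the coefficient $-(-1)^{p+qr}$ of Equation~(\ref{eq:ainf-alg}), and the coefficients $(-1)^{p+qr}$ and $-(-1)^{\varepsilon}$ with $\varepsilon=\sum_{u=1}^{k}(k-u)(1-i_u)$ of Equation~(\ref{eq:ainf-morph}). This amounts to a careful bookkeeping of how the positively oriented bases of the top cells of the factors transport through the block permutations $\Theta_{p,q,r}$ and $\Theta^{i_1,\ldots,i_k}$. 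Since the orientation conventions chosen here are identical to those of \cite[Section~1.4]{mazuir-I}, this sign computation can be imported from there, or carried out directly by induction on $n$ using the explicit block-permutation description of the maps $\Theta$ given in the proof of \cref{prop:PropertiesKLoday}.
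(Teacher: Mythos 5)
Your proposal is correct and follows essentially the same route as the paper: identify cells with (2-colored) trees via the face lattice, use quasi-freeness of $\Ainf$ and $\Minf$ to reduce to generators, match the codimension-one faces of types $(\B)$ and $(\T)$ with the two sums in the defining equations, and defer the sign bookkeeping to \cite[Section~1.4]{mazuir-I} — which is precisely the citation the paper's own proof relies on. The only difference is one of exposition: the paper compresses all of this into two sentences, while you spell out the intermediate steps.
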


\begin{proof}
The choice of a cellular orientation endows the $\K_n$ and $\J_n$ with a natural CW structure (see \cite[Proposition 4.22]{LA21}).
The choice of the left-levelwise order on trees ensures that we recover precisely the usual sign conventions for the partial compositions of the quasi-free operad $\Ainf$ and for the action-composition maps of the quasi-free operadic bimodule $\Minf$. 
The signs for the respecive differentials were computed in \cite[Section 1.4]{mazuir-I}.
\end{proof}

\subsection{Tensor product of \Ainf -algebras and \Ainf -morphisms}

\subsubsection{Diagonals on the operad \Ainf\ and on the operadic bimodule \Minf}

\begin{definition}[Operadic diagonals] $ $
\begin{enumerate}[leftmargin=*]
    \item A \emph{diagonal on the operad \Ainf} is a morphism of dg operads $\triangle : \Ainf \rightarrow \Ainf \otimes \Ainf$ which satisfies $\triangle (\arbreopdeux) = \arbreopdeux \otimes \arbreopdeux$. 
    \item Given a diagonal on the operad \Ainf, a \emph{diagonal on the operadic bimodule \Minf} is a morphism of dg-operadic bimodules $\triangle : \Minf \rightarrow \Minf \otimes \Minf$ which satisfies $\triangle ( \arbreopunmorph ) = \arbreopunmorph \otimes \arbreopunmorph$, and where $\Minf \otimes \Minf$ is endowed with its $(\Ainf , \Ainf)$-operadic bimodule structure induced by the diagonal on \Ainf .
\end{enumerate}
\end{definition}

Diagonals provide an adapted framework to define tensor products of \Ainf -algebras and tensor products of \Ainf -morphisms.
Given a diagonal $\Ainf \to \Ainf \otimes \Ainf$ and two \Ainf -algebras $A$ and $B$, one can define an \Ainf -algebra structure on $A \otimes B$ by considering the following composition
\[ \Ainf \longrightarrow \Ainf \otimes \Ainf \longrightarrow \End_A \otimes \End_B \longrightarrow \End_{A \otimes B} \ . \]
Given similarly a diagonal $\Minf \to \Minf \otimes \Minf$ and two \Ainf -morphisms $F_1 : A_1 \rightsquigarrow B_1$ and $F_2 : A_2 \rightsquigarrow B_2$, one can define an \Ainf -morphism $F_1 \otimes F_2 : A_1 \otimes A_2 \rightsquigarrow B_1 \otimes B_2$ by the following composition
\[ \Minf \rightarrow \Minf \otimes \Minf \rightarrow \Hom^{A_1}_{B_1} \otimes \Hom^{A_2}_{B_2} \rightarrow  \Hom^{A_1 \otimes A_2}_{B_1 \otimes B_2} \ . \]
We moreover point out that the conditions $\triangle (\arbreopdeux) = \arbreopdeux \otimes \arbreopdeux$ and $\triangle ( \arbreopunmorph ) = \arbreopunmorph \otimes \arbreopunmorph$ respectively imply that these constructions recover the standard tensor product of dg algebras and the standard tensor product of ordinary morphisms between dg algebras.

\subsubsection{Admissible edges and permutations}

We fix a (2-colored) nested linear graph $(\gra,\mathcal{N})$.
We denote by $N_i$ the unique minimal nest of $\mathcal{N}$ with respect to nest inclusion, which contains the edge $i$.  

\begin{definition}[Admissible edge]
For a nested linear graph $(\gra,\mathcal{N})$, an edge $i$ is \emph{admissible} with respect to $\mathcal{N}$ if $i \neq \min N_i$. 
For a 2-colored nested linear graph $(\gra,\mathcal{N})$, an edge $i$ is \emph{admissible} with respect to $\mathcal{N}$ when $N_i$ is bicolored, or if $i \neq \min N_i$ when $N_i$ is monochrome.
We denote the set of admissible edges of $\mathcal{N}$ by $\mathrm{Ad}(\mathcal{N})$. 
\end{definition}

\begin{definition} [Left-levelwise order]
\label{def:left-levelwise-graph}
The \emph{left-levelwise order} on $\mathcal{N}$ is defined by ordering the nests by decreasing order of cardinality, and ordering two nests of the same cardinality according to the increasing order on their minimal elements. 
\end{definition}

\noindent  Under the bijection of \cref{lemma:bijection}, the left-levelwise order on the nesting of a nested linear graph is equivalent to the left-levelwise order on the vertices of the corresponding tree $t$, as defined in \cref{def:left-levelwise-tree} .

Consider the left-levelwise order $N^1<N^2<\cdots < N^k$ on the nesting $\mathcal{N}=\{N^j\}_{1\leq j \leq k}$.
We endow the set $\mathrm{Ad}(\mathcal{N})$ with a total order, by ordering the admissible edges of $N_1 \setminus \cup_{2\leq j \leq k} N_j$ in increasing order, then the admissible edges of $N_2 \setminus \cup_{3\leq j \leq k} N_j$ in increasing order, and so on. 
Given two nestings $\mathcal{N}, \mathcal{N}'$ of $\gra$, we endow the set $\mathrm{Ad}(\mathcal{N})\sqcup \mathrm{Ad}(\mathcal{N}')$ with the total order given by following the total order on $\mathrm{Ad}(\mathcal{N})$ and then the total order on $\mathrm{Ad}(\mathcal{N}')$. 
We denote by $\triangle^K$ and $\triangle^J$ the algebraic diagonals obtained from the polytopal ones by applying the cellular chains functor, see \cref{prop:diagonal-polytopale-a-infini,prop:diagonal-polytopale-m-infini} below. 
The proofs of these two propositions include the proofs of the following two lemmas. 

\begin{lemma} 
\label{prop:signs-ass}
For a pair of nestings of complementary dimensions $(\mathcal{N}, \mathcal{N}')\in \Ima\triangle^K$, the function $\sigma_{\mathcal{N}\mathcal{N}'}: \mathrm{Ad}(\mathcal{N})\sqcup \mathrm{Ad}(\mathcal{N}') \to (1,2,\ldots,|\mathrm{Ad}(\mathcal{N})\sqcup \mathrm{Ad}(\mathcal{N}')|)$ defined on $i \in \mathrm{Ad}(\mathcal{N})$ by 
\begin{equation*}
  \sigma_{\mathcal{N}\mathcal{N}'}(i)= 
  \begin{cases}
    \min N_i -1 & \text{ if } i \in \mathrm{Ad}(\mathcal{N})\cap \mathrm{Ad}(\mathcal{N}') \text{ and } 1 \neq \min N_i < \min N_i' \\ 
    i-1 & \text{ otherwise ,} 
  \end{cases}
\end{equation*}
and similarly on $i \in \mathrm{Ad}(\mathcal{N}')$ by reversing the roles of $\mathcal{N}$ and $\mathcal{N}'$, induces a permutation of the set $$\{1,2,\ldots,|\mathrm{Ad}(\mathcal{N})\sqcup \mathrm{Ad}(\mathcal{N}')|\}$$ that we will still denote by $\sigma_{\mathcal{N}\mathcal{N}'}$.
\end{lemma}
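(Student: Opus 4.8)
The plan is to deduce that $\sigma_{\mathcal{N}\mathcal{N}'}$ is a permutation from the fact that it records a change of ordered basis between two bases of one and the same vector space. First I would isolate the elementary reduction: the source $\mathrm{Ad}(\mathcal{N})\sqcup \mathrm{Ad}(\mathcal{N}')$ and the target $\{1,\ldots,|\mathrm{Ad}(\mathcal{N})\sqcup \mathrm{Ad}(\mathcal{N}')|\}$ are finite sets of equal cardinality, so it suffices to check that $\sigma_{\mathcal{N}\mathcal{N}'}$ lands in the target and is injective. The candidate values $i-1$ and $\min N_i-1$ visibly lie in $\{1,\ldots,n-2\}$ for an admissible edge (since then $i>\min N_i\geq 1$), so the genuine content is injectivity, together with the count $|\mathrm{Ad}(\mathcal{N})|+|\mathrm{Ad}(\mathcal{N}')|=n-2$.

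Both of these I would extract from the cellular orientation conventions underlying \cref{prop:cellular-chains}. There the oriented basis of the linear span of the face $F_{\mathcal{N}}$ is obtained by transporting the positively oriented bases of the top cells of the factors $\K_{n_i}$ through the action–composition maps, read off according to the left-levelwise order of \cref{def:left-levelwise-graph}; the admissible edges of $\mathcal{N}$ serve precisely as the index set of this distinguished ordered basis. When $(\mathcal{N},\mathcal{N}')$ has complementary dimension and lies in $\Ima\triangle^K$, I would argue that the faces $F_{\mathcal{N}}$ and $F_{\mathcal{N}'}$ are transverse, so that the concatenation of their oriented bases — those indexed by $\mathrm{Ad}(\mathcal{N})$ first, then those indexed by $\mathrm{Ad}(\mathcal{N}')$ — is again a basis of the $(n-2)$-dimensional tangent space of $\K_n$. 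This forces the count above, and exhibits $\sigma_{\mathcal{N}\mathcal{N}'}$ as exactly the permutation expressing this concatenated basis in the standard ordered basis $\{e_1-e_{j+1}\}_{1\leq j\leq n-2}$ of the top cell; being the comparison of two bases of a single space, it is automatically a bijection.

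The remaining work, which I expect to be the crux, is twofold. First one must prove the transversality statement, namely that membership in $\Ima\triangle^K$ together with $\dim F_{\mathcal{N}}+\dim F_{\mathcal{N}'}=n-2$ forces $T_{F_{\mathcal{N}}}\oplus T_{F_{\mathcal{N}'}}$ to be the whole tangent space; I would obtain this from the normal-cone description of \cref{lemma:normalcones}, the universal criterion of \cref{thm:universalformula} and \cref{prop:refinementofnormalfans}, together with the fact that the diagonal is a section of $(x,y)\mapsto (x+y)/2$ \cite[Proposition 1.1]{LA21}. Second one must match this abstract change-of-basis permutation with the stated formula: computing, for each admissible edge $i$, the leading coordinate of the associated basis vector should show that this coordinate is governed by $i$ generically, and by the left endpoint $\min N_i$ of its nest exactly when that endpoint is different from $1$ and strictly smaller than its counterpart $\min N_i'$ — which is the origin of the two cases in the definition. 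The delicate point is the bookkeeping of which of the two copies of a doubly-admissible edge receives the lowered value; the dichotomy $\min N_i<\min N_i'$ versus $\min N_i'<\min N_i$ is precisely what must be shown to prevent two basis vectors from competing for the same coordinate, and hence to yield injectivity of $\sigma_{\mathcal{N}\mathcal{N}'}$.
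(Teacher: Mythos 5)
Your proposal follows essentially the same route as the paper: the proof of \cref{prop:diagonal-polytopale-a-infini} likewise uses that $\dim(F\cap\rho_z G)=0$ (quoted from the second part of the proof of \cite[Theorem 1.26]{LA21}) together with complementarity of dimensions to conclude that the two orientation bases indexed by $\mathrm{Ad}(\mathcal{N})$ and $\mathrm{Ad}(\mathcal{N}')$ jointly form a basis of the linear span of $\K_n$, identifies the admissible edges with the rightmost non-zero (leading) coordinates of these basis vectors, resolves the only possible collision --- a doubly admissible edge --- by linear independence exactly as in your case dichotomy, and attributes the $-1$ shift to the codimension-one embedding $\K_n\subset\RR^{n-1}$. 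One caveat: the remark that the map is ``automatically a bijection'' because it compares two bases is not by itself a proof that the stated combinatorial formula is injective, but you correctly locate the actual argument afterwards in the leading-coordinate matching and the doubly-admissible bookkeeping, which is precisely where the paper establishes it.
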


\begin{lemma} 
\label{prop:signs-mul}
For a pair of 2-colored nestings of complementary dimensions $(\mathcal{N},\mathcal{N}')\in \Ima\triangle^J$, the function $\sigma_{\mathcal{N}\mathcal{N}'}: \mathrm{Ad}(\mathcal{N})\sqcup \mathrm{Ad}(\mathcal{N}') \to (1,2,\ldots,|\mathrm{Ad}(\mathcal{N})\sqcup \mathrm{Ad}(\mathcal{N}')|)$ defined on $i \in \mathrm{Ad}(\mathcal{N})$ by
\begin{equation*}
  \sigma_{\mathcal{N}\mathcal{N}'}(i)= 
  \begin{cases}
    \min N_i & \text{ if } i \in \mathrm{Ad}(\mathcal{N})\cap \mathrm{Ad}(\mathcal{N}') , N_i \text{ is monochrome and } N_i' \text{ is not} \\
    \min N_i & \begin{array}{l}
         \text{ if } i \in \mathrm{Ad}(\mathcal{N})\cap \mathrm{Ad}(\mathcal{N}'), N_i \text{ and } N_i' \text{ are monochrome} \\
         \text{ and } \min N_i < \min N_i' \ , 
    \end{array}   \\ 
    i & \text{ otherwise ,} 
  \end{cases}
\end{equation*}
and similarly on $i \in \mathrm{Ad}(\mathcal{N}')$ by reversing the roles of $\mathcal{N}$ and $\mathcal{N}'$, induces a permutation of the set $$\{1,2,\ldots,|\mathrm{Ad}(\mathcal{N})\sqcup \mathrm{Ad}(\mathcal{N}')|\}$$ that we will still denote by $\sigma_{\mathcal{N}\mathcal{N}'}$.
\end{lemma}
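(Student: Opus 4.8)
The plan is to prove this lemma together with \cref{prop:diagonal-polytopale-m-infini}, by reading off $\sigma_{\mathcal{N}\mathcal{N}'}$ from the orientation comparison that computes the coefficient of $[\mathcal{N}]\otimes[\mathcal{N}']$ in the algebraic diagonal $\triangle^J = C_\bullet^{\mathrm{cell}}(\triangle_n)$. First I would record the dimension count: with the conventions of \cref{lemma:bijection}, the admissible edges of a $2$-colored nesting index the orientation basis of the corresponding face, so that $|\mathrm{Ad}(\mathcal{N})| = \dim\mathcal{N}$, the bicolored nests contributing exactly one more admissible edge than the monochrome ones, precisely as purple vertices have degree one higher than monochrome vertices of the same arity (\cref{sec:multiplihedra}). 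Since the pair has complementary dimensions, $|\mathrm{Ad}(\mathcal{N})\sqcup\mathrm{Ad}(\mathcal{N}')| = \dim\mathcal{N}+\dim\mathcal{N}' = \dim\J_n = n-1$, so $\sigma_{\mathcal{N}\mathcal{N}'}$ is a self-map of an $(n-1)$-element set and it suffices to prove that it is injective.

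Next I would make the geometric meaning of $\sigma_{\mathcal{N}\mathcal{N}'}$ precise. With the left-levelwise cellular orientations of \cref{prop:cellular-chains}, the top cell of $\J_n$ carries the ordered basis $\{-e_j\}_{1\le j\le n-1}$, while each face $\mathcal{N}$ carries the ordered basis obtained from its product decomposition (\cref{prop:PropertiesKLoday}(4)), indexed by $\mathrm{Ad}(\mathcal{N})$ in the left-levelwise order of \cref{def:left-levelwise-graph}. Because $\triangle_n$ is a section of the midpoint projection $\pi\colon\J_n\times\J_n\to\J_n$ \cite[Proposition 1.1]{LA21}, an image pair $(\mathcal{N},\mathcal{N}')$ labels a top-dimensional cell $(\mathcal{N}+\mathcal{N}')/2$ of the induced polytopal subdivision of $\J_n$, whose tangent space identifies $T_{\mathcal{N}}\oplus T_{\mathcal{N}'}$ orientation-preservingly with $T\J_n$. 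Under this identification the concatenated basis — that of $\mathcal{N}$ followed by that of $\mathcal{N}'$ — is, up to sign, a reordering of $\{-e_j\}_j$, and $\sigma_{\mathcal{N}\mathcal{N}'}$ is by definition the permutation recording this reordering. The two clauses of the formula correspond to which coordinate serves as the pivot of each basis vector: an edge kept as $i$ pivots on $e_i$, whereas an edge sent to $\min N_i$ pivots on $e_{\min N_i}$, the monochrome/bicolored and $\min N_i<\min N_i'$ conditions selecting the pivot exactly as dictated by the edge directions of $\J_n$ computed in \cref{prop:OrientationVector} and the normal cones of \cref{lemma:normalcones}.

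With this dictionary in place, injectivity of $\sigma_{\mathcal{N}\mathcal{N}'}$ is equivalent to the linear independence of the concatenated family of basis vectors, that is, to $(\mathcal{N}+\mathcal{N}')/2$ being genuinely full-dimensional. This is guaranteed precisely by the hypothesis $(\mathcal{N},\mathcal{N}')\in\Ima\triangle_n$: the image cells subdivide $\J_n$ and are therefore top-dimensional, so $T_{\mathcal{N}}\oplus T_{\mathcal{N}'}=\RR^{n-1}$, by the fiber-polytope description of the diagonal in \cite{LA21} together with \cref{prop:goodprojection} and \cref{thm:formuladiagonal}. Hence $\sigma_{\mathcal{N}\mathcal{N}'}$ is an injective self-map of an $(n-1)$-element set, thus a permutation. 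The proof of \cref{prop:signs-ass} runs identically, with the associahedra replacing the multiplihedra and a single monochrome color, which is exactly why the displacement there is by $\min N_i-1$ rather than by $\min N_i$.

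The main obstacle is the middle step: matching the explicit piecewise formula for $\sigma_{\mathcal{N}\mathcal{N}'}$ to the geometric reordering. The bookkeeping here is delicate, since one must propagate the left-levelwise bases of the smaller associahedra and multiplihedra through the block permutations $\Theta$ of \cref{def:action-composition} and the coordinate projection $\pi$ of \cref{prop:lifting}, and verify that, whenever an edge $i$ is admissible in both nestings, exactly one of the two copies is displaced to its nest-minimum, so that the two images remain distinct. Checking that the color conditions and the comparison $\min N_i<\min N_i'$ reproduce this displacement — rather than producing the collisions that occur for complementary pairs lying outside $\Ima\triangle_n$ — is the technical heart of the argument.
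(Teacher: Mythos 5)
Your overall strategy coincides with the one the paper actually uses: the lemma is absorbed into the proofs of \cref{prop:diagonal-polytopale-a-infini,prop:diagonal-polytopale-m-infini}, where the coefficient of $\mathcal{N}\otimes\mathcal{N}'$ is the determinant of the matrix expressing the concatenated orientation bases of the two faces in the basis of the top cell, the image condition supplies linear independence via $\dim(F\cap\rho_z G)=0$ together with the complementary-dimension hypothesis, and $\sigma_{\mathcal{N}\mathcal{N}'}$ is read off as the row permutation placing the rightmost non-zero entries on the diagonal. So the ingredients you list are the right ones.

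However, what you set aside as ``the technical heart'' is precisely the content of the lemma, and two of your intermediate claims are not quite right as stated. First, the concatenated family is not ``up to sign a reordering of $\{-e_j\}_j$'': the orientation bases consist of vectors such as $e_1-e_{j+1}$ and their images under the block permutations $\Theta$, so the correct statement (and the one the paper uses) is only that each row has a distinguished rightmost non-zero entry, sitting in the column of the corresponding admissible edge. Second, ``injectivity of $\sigma_{\mathcal{N}\mathcal{N}'}$ is equivalent to linear independence of the concatenated family'' is not an equivalence: linear independence guarantees that \emph{some} system of distinct pivot columns exists, not that the specific piecewise formula in the statement selects one. The paper closes this gap by observing that when $\mathcal{N}$ and $\mathcal{N}'$ share an admissible edge, linear independence forces the two corresponding rows to differ in an earlier column, and the clauses involving the colors of $N_i,N_i'$ and the comparison $\min N_i<\min N_i'$ record exactly which of the two rows is displaced there. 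Without carrying out that case analysis --- together with the verification that $|\mathrm{Ad}(\mathcal{N})|=\dim\mathcal{N}$, which itself requires propagating the left-levelwise bases through the maps $\Theta$ --- the argument is a correct plan that stops exactly where the work begins.
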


\subsubsection{The polytopal diagonals on \Ainf\ and \Minf}
\label{ss:diagonals}

We use nested linear graphs introduced in \cref{ss:2-col} to work with the operad \Ainf\ and the operadic bimodule \Minf . The generating operation of arity $n$ of \Ainf\ corresponds to the trivial nested linear graph with $n$ vertices $\black{ \bullet \cdots \bullet }$, while the generating operation of arity $n$ of \Minf\ is represented by the trivial 2-colored nested linear graph with $n$ vertices $\purple{\bullet \cdots \bullet}$.

\begin{proposition}
\label{prop:diagonal-polytopale-a-infini}
The image under the functor $C_\bullet^{\mathrm{cell}}$ of the diagonal of the Loday associahedra constructed in \cite{MTTV19} defines a diagonal on the operad \Ainf , that we denote \diagainf . It is determined by the formula 
\[ \diagainf \left( \black{ \bullet \cdots \bullet } \right) = 
\sum_{\substack{
  \mathcal{N},\mathcal{N}' \in \mathcal{N}_n \\ 
  \tp(\mathcal{N}) \leq \bm(\mathcal{N'}) \\
  |\mathcal{N}|+|\mathcal{N}'|=n
}}
(-1)^{|\mathrm{Ad}(\mathcal{N})\cap \mathrm{Ad}(\mathcal{N}')|}\mathrm{sgn}(\sigma_{\mathcal{N}\mathcal{N}'})\mathcal{N} \otimes \mathcal{N}' \ , \] 
where $\bullet \cdots \bullet$ stands for the linear graph with $n$ vertices. 
\end{proposition}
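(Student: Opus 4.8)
The plan is to obtain $\diagainf$ by transporting the topological diagonal $\{\triangle_n : \K_n \to \K_n \times \K_n\}$ of \cite{MTTV19} through the cellular chains functor, and then to read off the combinatorial formula from the magical formula \eqref{eq:magical-formula} together with an explicit orientation computation.

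First I would settle existence and the normalization. By \cite[Theorem 1]{MTTV19} the maps $\triangle_n$ assemble into a morphism of operads from $\{\K_n\}$ to the Hadamard product $\{\K_n\}\times\{\K_n\}$ in $\PolySub$. Since $C_\bullet^{\mathrm{cell}}$ is strong symmetric monoidal, it sends this Hadamard product to the tensor product of dg operads, and using the identification $C_\bullet^{\mathrm{cell}}(\{\K_n\})\cong \Ainf$ of \cref{prop:cellular-chains} it produces a morphism of dg operads $\diagainf : \Ainf \to \Ainf \otimes \Ainf$. As $\K_2$ is a point, $\triangle_2$ is the identity, whence $\diagainf(\arbreopdeux)=\arbreopdeux\otimes\arbreopdeux$; so $\diagainf$ is a diagonal.

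Next I would determine the support of the formula. The map $\diagainf$ is the cellular chain map of $\triangle_n$, so on the top cell of $\K_n$ it is a signed sum over the pairs of faces $(F,G)$ of complementary dimension $\dim F + \dim G = \dim \K_n = n-2$ that lie in $\Ima \triangle_n$. By the magical formula \eqref{eq:magical-formula}, valid for the associahedra, these are exactly the pairs with $\tp F \leq \bm G$. Under the bijection of \cref{lemma:bijection} a face corresponds to a nesting with $\dim F = (n-1)-|\mathcal{N}|$, so the dimension constraint becomes $|\mathcal{N}|+|\mathcal{N}'|=n$ and the order condition becomes $\tp(\mathcal{N}) \leq \bm(\mathcal{N}')$, recovering the index set of the sum. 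Because $\triangle_n$ is a morphism in $\PolySub$, i.e.\ a homeomorphism onto a polytopal subcomplex whose preimage subdivides $\K_n$, its restriction to the subdivision cell mapping onto such a $(F,G)$ is an orientation-preserving or reversing PL homeomorphism; hence each coefficient is $\pm 1$ and the remaining task is the sign.

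Finally, for the sign I would fix a generic point $p$ in the relative interior of the cell mapping to $(F,G)$ and compute the determinant of the differential $d_p\triangle_n=(d\bm,d\tp)\colon T_p\K_n \xrightarrow{\cong} TF\oplus TG$ in the left-levelwise oriented bases of \cref{prop:cellular-chains}. The admissible edges $\mathrm{Ad}(\mathcal{N})$ (resp.\ $\mathrm{Ad}(\mathcal{N}')$), ordered left-levelwise, index the chosen oriented basis of $TF$ (resp.\ $TG$); the edge directions of $\K_n\cap\rho_p\K_n$ are the vectors $\pm(e_a-e_b)$ controlled as in \cref{prop:goodprojection}, and $\bm$ (resp.\ $\tp$) moves along the $\vec v$-decreasing (resp.\ increasing) ones. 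The function $\sigma_{\mathcal{N}\mathcal{N}'}$ records, for each admissible edge, the index of the coordinate direction $e_1-e_{j+1}$ to which its tangent direction is sent; proving that this assignment is a bijection onto $\{1,\ldots,n-2\}=\{1,\ldots,|\mathrm{Ad}(\mathcal{N})\sqcup\mathrm{Ad}(\mathcal{N}')|\}$ is precisely \cref{prop:signs-ass}, which I would establish by a direct analysis of the minimal-nest structure, using that nests are intervals of the linear graph. The sign $\mathrm{sgn}(\sigma_{\mathcal{N}\mathcal{N}'})$ then accounts for reordering the concatenated basis into the standard one, while the factor $(-1)^{|\mathrm{Ad}(\mathcal{N})\cap\mathrm{Ad}(\mathcal{N}')|}$ accounts for the edges admissible in both nestings, whose common direction enters $TF$ and $TG$ with opposite orientations because $\bm$ minimizes and $\tp$ maximizes $\langle-,\vec v\rangle$. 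Assembling these contributions yields the stated coefficient and matches the conventions of \cite[Section 1.4]{mazuir-I}. The main obstacle is exactly this third step: tracking the orientations through the diagonal and verifying that the bookkeeping collapses to $(-1)^{|\mathrm{Ad}(\mathcal{N})\cap\mathrm{Ad}(\mathcal{N}')|}\mathrm{sgn}(\sigma_{\mathcal{N}\mathcal{N}'})$, together with the proof that $\sigma_{\mathcal{N}\mathcal{N}'}$ is genuinely a permutation.
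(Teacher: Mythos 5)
Your proposal is correct and follows essentially the same route as the paper: existence and normalization via the strong monoidality of $C_\bullet^{\mathrm{cell}}$, the support of the sum via the magical formula for the cellular image of $\triangle_n$ translated through \cref{lemma:bijection}, and the sign as a determinant comparing the left-levelwise orientation bases of $F$ and $G$ with that of the top cell, organized by admissible edges and the permutation $\sigma_{\mathcal{N}\mathcal{N}'}$ of \cref{prop:signs-ass}. The paper phrases the sign computation as expressing the bases $e_j^F, e_j^G$ as rows of a matrix in the basis $e_j$ (using $\dim(F\cap\rho_z G)=0$ from \cite[Theorem 1.26]{LA21} to see they jointly form a basis, with the rightmost nonzero entries located at the admissible edges), which is the same determinant you compute via $d_p\triangle_n$; both accounts leave the final bookkeeping for $(-1)^{|\mathrm{Ad}(\mathcal{N})\cap\mathrm{Ad}(\mathcal{N}')|}\mathrm{sgn}(\sigma_{\mathcal{N}\mathcal{N}'})$ at a comparable level of detail.
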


\begin{proof}
The image of the diagonal on the Loday associahedra under the functor $C_\bullet^{\mathrm{cell}}$ defines a diagonal on the operad \Ainf\ as this functor is strong monoidal. 
This diagonal $\diagainf : \Ainf \rightarrow \Ainf \otimes \Ainf$ is determined by the image of the generating operations of the quasi-free operad \Ainf , which are the trivially nested linear graphs. 
The signs arise from the choices of cellular orientations on the Loday associahedra made in \cref{sss:forcey--loday-realize} as follows. 
As explained in the proof of \cite[Proposition 4.27]{LA21}, the computation of the signs boils down to the computation of the determinant of the bases $e_j^{F}, e_j^{G}$ determining the cellular orientations of the faces $F$ and $G$ associated to the nestings $\mathcal{N}$ and $\mathcal{N}'$, expressed in the basis $e_j$ of the top dimensional cell of $\K_n$. 
The second part of the proof of \cite[Theorem 1.26]{LA21} shows that  $\dim(F\cap \rho_z G)=0$, for any $z \in (\mathring F+ \mathring G)/2$.
Combined with the fact that $\dim F + \dim G = \dim \K_n$, this implies that the two bases $e_j^F, e_j^G$ form together a basis of the linear span of $\K_n$. 
Writing horizontally the $e_j^F$ and then the $e_j^G$ in the basis $e_j$ defines a square matrix.
The positions of the rightmost non-zero entries of each line are given by the admissible edges of $\mathcal{N}$ and $\mathcal{N}'$. 
The permutation $\sigma_{\mathcal{N}\mathcal{N}'}$ corresponds to a permutation of the lines of this matrix, sending these righmost entries to the diagonal, except for one case: when $\mathcal{N}$ and $\mathcal{N}'$ share the same admissible edge. 
In this case, linear independence guarantees that the two vectors differ in another place.
We moreover point out that that the $-1$ term in the definition of the permutation $\sigma_{\mathcal{N}\mathcal{N}'}$ in \cref{prop:signs-ass} stems from the fact that $\K_n$ is defined in $\RR^{n-1}$ but has dimension $n-2$. 
\end{proof}

We compute in particular 

\[ \resizebox{\hsize}{!}{$\displaystyle{
\begin{matrix}
\diagainf ( \black { \bullet \bullet } )
&=& & \black{\bullet \bullet} \otimes \black{\bullet \bullet} \ , & & \\
\diagainf ( \black { \bullet \bullet \bullet } ) 
&=& & \black{\black{\bullet \bullet} \bullet} \otimes \black{\bullet \bullet \bullet} &+& \black{\bullet \bullet \bullet} \otimes \black{ \bullet \black{\bullet \bullet}} \ , \\
\diagainf ( \black{ \bullet \bullet \bullet \bullet} ) 
&=& & 
\black{\bullet \bullet \bullet \bullet} \otimes \black{\bullet \black{ \bullet \black{ \bullet \bullet }} }  
&+& \black{ \black{ \black{\bullet \bullet} \bullet } \bullet }  \otimes \black{ \bullet \bullet \bullet \bullet } \\
& & -& \black{ \black{\bullet \bullet } \bullet \bullet } \otimes \black{ \bullet \bullet \black{ \bullet \bullet }}  &+& \black{ \black{ \bullet \bullet \bullet } \bullet }  \otimes \black{ \bullet \black{ \bullet \bullet } \bullet }  \\ 
& & +& \black{ \black{ \bullet \bullet \bullet } \bullet }  \otimes \black{ \bullet \black{ \bullet \bullet \bullet }} &+& \black{ \bullet \black{ \bullet \bullet } \bullet } \otimes \black{ \bullet \black{ \bullet \bullet \bullet }} \ .
\end{matrix}}$} \]

\begin{remark}
\cref{prop:diagonal-polytopale-a-infini} completes the work of \cite{MTTV19}, by explicitly computing the signs for the polytopal diagonal on the dg level. This formula corresponds in fact to the formula originally computed in \cite{MarklShnider06} (up to signs verification). We also conjecture that this diagonal is equal to the diagonal constructed in \cite{SaneblidzeUmble04}.
\end{remark}

\begin{definition}[Tensor product of \Ainf -algebras] \label{def:tensor-product-ainf-alg}
Given $A$ and $B$ two \Ainf -algebras, their tensor product as \Ainf -algebras is defined to be the dg module $A \otimes B$ endowed with the \Ainf -algebra structure induced by the diagonal \diagainf .
\end{definition}

\begin{proposition}
\label{prop:diagonal-polytopale-m-infini}
The image under the functor $C_\bullet^{\mathrm{cell}}$ of the diagonal on the Forcey--Loday multiplihedra constructed in this paper defines a diagonal on the operadic bimodule \Minf , that we denote \diagminf . It is determined by the formula
\[ \diagminf \left( \purple{\bullet \cdots \bullet} \right) = 
\sum_{
\mathcal{N},\mathcal{N}'} 
(-1)^{|\mathrm{Ad}(\mathcal{N})\cap \mathrm{Ad}(\mathcal{N}')|}
\mathrm{sgn}(\sigma_{\mathcal{N}\mathcal{N}'})
\mathcal{N} \otimes \mathcal{N}' \ ,\]
where the sum runs over the pairs $\mathcal{N},\mathcal{N}' \in \mathcal{N}^2_n$ such that $\ |\mono(\mathcal{N})|+|\mono(\mathcal{N}')|=n-1$ and which satisfy the conditions in \cref{thm:formuladiagonal}.
\end{proposition}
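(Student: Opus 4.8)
The plan is to follow the proof of \cref{prop:diagonal-polytopale-a-infini} \emph{mutatis mutandis}, replacing the operad $\Ainf$ and the associahedra by the operadic bimodule $\Minf$ and the multiplihedra. First I would observe that, since $C_\bullet^{\mathrm{cell}}$ is a strong symmetric monoidal functor, applying it to the morphism of operadic bimodules $\{\triangle_n : \J_n \to \J_n \times \J_n\}$ from \cref{thm:MainOperad}, together with the identifications $C_\bullet^{\mathrm{cell}}(\{\K_n\})\cong \Ainf$ and $C_\bullet^{\mathrm{cell}}(\{\J_n\})\cong \Minf$ of \cref{prop:cellular-chains}, yields a morphism of dg operadic bimodules $\diagminf : \Minf \to \Minf \otimes \Minf$. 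It is a diagonal because $\J_1$ is a point, which forces $\diagminf(\arbreopunmorph)=\arbreopunmorph \otimes \arbreopunmorph$. As a morphism of quasi-free operadic bimodules, it is determined by its values on the generating operations, namely the trivial $2$-colored nestings $\purple{\bullet \cdots \bullet}$.

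Next I would identify the underlying index set. In contrast with the associahedral case, where the cellular image is governed by the ``magical formula'' $\tp(\mathcal{N})\leq\bm(\mathcal{N}')$, here the magical formula fails (\cref{prop:pas-top-bot}), so the image $\Ima\triangle_n$ must be read off from the full description of \cref{thm:formuladiagonal}. Restricting to pairs of complementary dimension amounts to the extra condition $|\mono(\mathcal{N})|+|\mono(\mathcal{N}')|=n-1$, using that a face of $\J_n$ labelled by $\mathcal{N}$ has dimension $(n-1)-|\mono(\mathcal{N})|$ (the codimension equals the number of monochrome nests, as recalled in \cref{sec:multiplihedra}). This produces exactly the index set of the stated sum.

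The heart of the proof, which also establishes \cref{prop:signs-mul}, is the sign computation. Following the argument of \cite[Proposition 4.27]{LA21} reproduced in the proof of \cref{prop:diagonal-polytopale-a-infini}, for a pair $(\mathcal{N},\mathcal{N}')$ of complementary dimension the sign is the determinant of the matrix whose rows are the cellular orientation bases $e_j^F, e_j^G$ chosen in \cref{prop:cellular-chains}, written in the basis $\{-e_j\}_{1\leq j\leq n-1}$ of the top cell of $\J_n$. The geometric inputs are that $\dim(F\cap\rho_z G)=0$ for $z\in(\mathring F+\mathring G)/2$ and $\dim F+\dim G=\dim\J_n$, so that the two bases together span $\RR^{n-1}$; here I would use crucially that $\J_n$ is \emph{full-dimensional} in $\RR^{n-1}$. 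The positions of the rightmost non-zero entries of the orientation vectors are governed by the admissible edges of $\mathcal{N}$ and $\mathcal{N}'$, and $\sigma_{\mathcal{N}\mathcal{N}'}$ is precisely the row permutation sending these entries to the diagonal, the factor $(-1)^{|\mathrm{Ad}(\mathcal{N})\cap\mathrm{Ad}(\mathcal{N}')|}$ accounting for shared admissible edges where linear independence forces a discrepancy elsewhere.

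The main obstacle, and the point genuinely distinguishing this case from the associahedral one, is the bookkeeping for the \emph{bicolored} nests. Since a purple vertex has degree one higher than a monochrome vertex, every edge of a bicolored nest is admissible; this is exactly what produces the case split in the definition of $\sigma_{\mathcal{N}\mathcal{N}'}$ in \cref{prop:signs-mul} (monochrome-versus-monochrome, monochrome-versus-not). I would verify that the rightmost non-zero entry attached to an admissible edge $i$ sits in position $\min N_i$ for the relevant nest, so that $\sigma_{\mathcal{N}\mathcal{N}'}$ takes the asserted form, and then check that it is a genuine permutation of $\{1,\ldots,|\mathrm{Ad}(\mathcal{N})\sqcup\mathrm{Ad}(\mathcal{N}')|\}$. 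Finally I would note that, unlike in \cref{prop:signs-ass}, no global $-1$ shift occurs: because $\J_n$ is full-dimensional in $\RR^{n-1}$ (whereas $\K_n$ has dimension $n-2$), the auxiliary normal direction $\vec n$ never enters the determinant.
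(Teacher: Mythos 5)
Your proposal is correct and follows essentially the same route as the paper, whose own proof is simply a reference back to the proof of \cref{prop:diagonal-polytopale-a-infini} together with the remark that no $-1$ shift occurs in $\sigma_{\mathcal{N}\mathcal{N}'}$ because $\J_n$ is full-dimensional in $\RR^{n-1}$. Your expansion of that reference (strong monoidality of $C_\bullet^{\mathrm{cell}}$, the index set from \cref{thm:formuladiagonal} with the complementary-dimension condition $|\mono(\mathcal{N})|+|\mono(\mathcal{N}')|=n-1$, and the determinant/permutation sign analysis with the extra case split for bicolored nests) matches the intended argument.
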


\begin{proof}
The proof is similar to the proof of \cref{prop:diagonal-polytopale-a-infini}. 
Note that in this case, there is no $-1$ term in the definition of the permutation $\sigma_{\mathcal{N}\mathcal{N}'}$ in \cref{prop:signs-mul} since $\J_n$ is full-dimensional. 
\end{proof}

We compute in particular 
\[
\begin{matrix}
\diagminf ( \purple{ \bullet } )  
&=&  & \purple{ \bullet } \otimes \purple{ \bullet } \ , & &  \\
\diagminf ( \purple{ \bullet \bullet } ) 
&=& & \blue{\bullet \bullet} \otimes \purple{\bullet \bullet} & + & \purple{\bullet \bullet} \otimes \red{\bullet \bullet} \ , \\
\diagminf (\purple{\bullet \bullet \bullet}) 
&=& 
& \blue{\blue{\bullet \bullet}\bullet} \otimes \purple{\bullet \bullet \bullet} 
& + & \purple{\bullet \bullet \bullet} \otimes \red{\bullet \red{\bullet \bullet}} \\
& &- &  \blue{\bullet \bullet \bullet} \otimes \purple{\bullet \blue{\bullet \bullet}}    &  - & \blue{\bullet \bullet \bullet} \otimes \red{\bullet \purple{\bullet \bullet}}  \\  &   &+ & \purple{\bullet \blue{\bullet \bullet}} \otimes \red{\bullet \purple{\bullet \bullet}} 
  &  - & \purple{\blue{\bullet \bullet} \bullet} \otimes \red{\purple{\bullet \bullet} \bullet} \\ & &  + & \purple{\blue{\bullet \bullet} \bullet} \otimes \red{\bullet \bullet \bullet}   &  + & \red{\purple{\bullet \bullet} \bullet} \otimes \red{\bullet \bullet \bullet} \ .
  \end{matrix}
\]

\begin{definition}[Tensor product of \Ainf -morphisms] \label{def:tensor-ainf-morph}
Let $F_1 : A_1 \rightsquigarrow B_1$ and $F_2 : A_2 \rightsquigarrow B_2$ be two \Ainf -morphisms between \Ainf-algebras.
Their tensor product is defined to be the \Ainf -morphism $F_1 \otimes F_2 : A_1 \otimes A_2 \rightsquigarrow B_1 \otimes B_2$ induced by the diagonal \diagminf on \Minf \ .
\end{definition}

One can ask whether the dg "magical formula" for the diagonal on the operad \Ainf\
also defines a diagonal on the operadic bimodule \Minf, i.e. if by relaxing the conditions of \cref{thm:formuladiagonal} to the condition $\tp(\mathcal{N}) \leq \bm(\mathcal{N'})$, the formula of \cref{prop:diagonal-polytopale-m-infini} still defines a diagonal on \Minf \ . A simple computation in arity 4 shows that the answer to this question is negative, see \cref{prop:pas-top-bot}. In other words, it is not possible to naively extend the "magical formula" for the tensor product of \Ainf -algebras to define a tensor product of \Ainf -morphisms.

\subsection{Categorification}

\subsubsection{Tensor product of \Ainf -categories and \Ainf -functors}

As categories can be viewed as associative algebras with several objects, \Ainf -categories can be viewed in a similar fashion as \Ainf -algebras with several objects. One says that the notions of \Ainf -category and \Ainf -functor are the horizontal categorifications of the notions of \Ainf -algebra and \Ainf -morphism, respectively. 
We refer to \cite[Chapter 1]{Seidel08} for the definitions of these two notions. 
We borrow the notations from \cite{Seidel08} and will moreover use the sign conventions of \cref{ss:ainf-alg-ainf-morph}. 

\begin{definition}[Tensor product of \Ainf -categories] \label{def:tensor-product-ainf-cat}
The \emph{tensor product} of two $\mathrm{A}_\infty$-categories $\cat{A}$ and $\cat{B}$ is given by 
\begin{itemize}[leftmargin=*]
  \item the set of objects $\mathrm{Ob}(\cat{A}\otimes \cat{B})\coloneqq \mathrm{Ob}(\cat{A})\times\mathrm{Ob}(\cat{B})$,
  \item for each pair of objects $X_1\times Y_1,X_2\times Y_2 \in \mathrm{Ob}(\cat{A}\otimes \cat{B})$, the dg module of morphisms \[\cat{A}\otimes \cat{B}(X_1\times Y_1,X_2\times Y_2)\coloneqq \cat{A}(X_1,X_2)\otimes\cat{B}(Y_1,Y_2) \ , \]
\end{itemize}
and by defining the higher compositions $m_n$ as in \cref{prop:diagonal-polytopale-a-infini}.
\end{definition}

\begin{samepage}
\begin{definition}[Tensor product of \Ainf -functors]
The \emph{tensor product} of two $\mathrm{A}_\infty$-functors $\cat{F}:\cat{A}_1 \rightsquigarrow \cat{B}_1$ and $\cat{G}:\cat{A}_2 \rightsquigarrow \cat{B}_2$ is given by the function 
\[ \mathrm{Ob}(\cat{F}\otimes \cat{G})\coloneqq \mathrm{Ob}(\cat{F})\times \mathrm{Ob}(\cat{G}) : \mathrm{Ob}(\cat{A}_1\otimes\cat{B}_1) \to \mathrm{Ob}(\cat{A}_2\otimes\cat{B}_2) \ , \]
and by defining the operations $(\cat{F} \otimes \cat{G})_n$ as in \cref{prop:diagonal-polytopale-m-infini}.
\end{definition}
\end{samepage}

\subsubsection{Identities}

The category $H_*(\cat{A})$ associated to an \Ainf -category $\cat{A}$ does not necessarily have identity morphisms. As explained in \cite[Section 1.2]{Seidel08}, there exist three notions of \Ainf -category with identity morphisms : \textit{strictly unital \Ainf -category}, \textit{cohomologically unital \Ainf -category} and \textit{homotopy unital \Ainf -category}. 
\begin{enumerate}[leftmargin=*]
\item A \textit{cohomologically unital} \Ainf -category is an \Ainf -category $\cat{A}$ which is such that $H_*(\cat{A})$ has identity morphisms.
\item A \textit{strictly unital} \Ainf -category is an \Ainf -category together with an element $e_X \in \cat{A} (X ,X )$ for every $X \in \mathrm{Ob}(\cat{A})$ such that $\partial (e_X) = 0$, $m_2 (e , \cdot ) = m_2 (\cdot , e ) = \ide$ and $m_n ( \cdots , e , \cdots ) = 0 \text { for } n \geq 3$.
\item A \textit{homotopy unital} \Ainf -category is defined to be an \Ainf -category together with elements $e_X \in \cat{A} (X ,X )$ and endowed with additional operations encoding the fact that the previous relations on the $m_n$ and the $e_X$ are satisfied only up to higher coherent homotopies, see also \cite[Section 6.1]{HirshMilles12}.
\end{enumerate}
We have in particular that
\[  \text{unital} \Rightarrow \text{homotopy unital} \Rightarrow \text{cohomologically unital} \ . \]
The proof of the following proposition is straightforward.

\begin{proposition} $ $
\begin{enumerate}[leftmargin=*]
\item If $\cat{A}$ and $\cat{B}$ are cohomologically unital \Ainf -categories, the tensor \Ainf -category $\cat{A} \otimes \cat{B}$ is again cohomologically unital. 
\item If $\cat{A}$ and $\cat{B}$ are strictly unital \Ainf -categories, the tensor \Ainf -category $\cat{A} \otimes \cat{B}$ is again strictly unital, with identity morphisms $e_{X \times Y} := e_X \otimes e_Y$ for  $X \in \mathrm{Ob}(\cat{A})$ and $Y \in \mathrm{Ob}(\cat{B})$.
\end{enumerate}
\end{proposition}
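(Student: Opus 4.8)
The plan is to verify both statements hom-space by hom-space, reducing everything to the dg modules $\cat{A}(X,X')\otimes\cat{B}(Y,Y')$ that make up the morphism spaces of $\cat{A}\otimes\cat{B}$. The only structural inputs I would use are those recorded in \cref{prop:diagonal-polytopale-a-infini}: the binary generator satisfies $\diagainf(\black{\bullet\bullet}) = \black{\bullet\bullet}\otimes\black{\bullet\bullet}$, so that $m_2^{\cat{A}\otimes\cat{B}} = m_2^{\cat{A}}\otimes m_2^{\cat{B}}$ on the nose (no Koszul sign, as the units have degree zero), whereas for $n\geq 3$ the operation $m_n^{\cat{A}\otimes\cat{B}}$ is a signed sum $\sum \pm\, m_{\mathcal{N}}^{\cat{A}}\otimes m_{\mathcal{N}'}^{\cat{B}}$ indexed by the pairs $(\mathcal{N},\mathcal{N}')$ of that proposition, where $m_{\mathcal{N}}$ denotes the composite operation of $\cat{A}$ associated to the tree $\mathcal{N}$.

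For Point~(1), I would observe that $m_2(e_X\otimes e_Y, -) = m_2(e_X, -)\otimes m_2(e_Y, -)$, and symmetrically on the right. Each factor $m_2(e_X,-)$ and $m_2(e_Y,-)$ is a quasi-isomorphism, since $\cat{A}$ and $\cat{B}$ are cohomologically unital. Using the Künneth theorem and the naturality of its short exact sequence (the paper works over $\mathbb{Z}$, so a Tor term may appear), the tensor product of two maps inducing the identity on homology again induces the identity on homology. Hence left and right multiplication by $[e_X\otimes e_Y]$ acts as the identity on $H_*(\cat{A}\otimes\cat{B})$, which is therefore cohomologically unital.

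For Point~(2), closedness $\partial(e_X\otimes e_Y)=0$ is immediate, and $m_2(e_X\otimes e_Y,-) = m_2(e_X,-)\otimes m_2(e_Y,-) = \id\otimes\id = \id$, and symmetrically, by strict unitality of $\cat{A}$ and $\cat{B}$. The substance is the vanishing $m_n(\ldots, e_X\otimes e_Y, \ldots)=0$ for $n\geq 3$, which I would prove term by term. Fix the leaf $i$ carrying the unit. In a term $\pm\, m_{\mathcal{N}}^{\cat{A}}\otimes m_{\mathcal{N}'}^{\cat{B}}$, the unit $e_X$ is fed into the vertex of $\mathcal{N}$ adjacent to leaf $i$ and $e_Y$ into the corresponding vertex of $\mathcal{N}'$. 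If that vertex has arity $\geq 3$ in $\mathcal{N}$, strict unitality of $\cat{A}$ gives $m_{\mathcal{N}}^{\cat{A}}(\ldots, e_X, \ldots)=0$ and the term dies; likewise if it has arity $\geq 3$ in $\mathcal{N}'$. Thus a term can survive only if leaf $i$ is adjacent to a binary vertex in \emph{both} $\mathcal{N}$ and $\mathcal{N}'$, since a binary vertex merely contracts (via $m_2(e,-)=m_2(-,e)=\id$) rather than killing the term.

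The crux, and the main obstacle, is therefore the combinatorial lemma: for every pair $(\mathcal{N},\mathcal{N}')\in\Ima\diagainf$ of complementary dimension (so $|\mathcal{N}|+|\mathcal{N}'|=n$) with $n\geq 3$, no leaf is adjacent to a binary vertex in both trees. I would deduce this from the magical formula $\tp\mathcal{N}\leq\bm\mathcal{N}'$ for the associahedra together with complementarity: if a leaf were the child of a binary vertex in both $\mathcal{N}$ and $\mathcal{N}'$, the forced bracketings near that leaf would produce a Tamari incompatibility between the maximal refinement $\tp\mathcal{N}$ and the minimal refinement $\bm\mathcal{N}'$, contradicting $\tp\mathcal{N}\leq\bm\mathcal{N}'$. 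I would stress that the image condition is indispensable: a complementary pair that is \emph{not} in the image — for instance $\black{\black{\bullet\bullet}\bullet\bullet}\otimes\black{\black{\bullet\bullet}\bullet\bullet}$ in arity $4$ — does have a leaf adjacent to binary vertices in both trees, so the vanishing genuinely relies on the precise description of $\Ima\diagainf$ and not merely on a dimension count. Once this lemma is established, every term of $m_n(\ldots, e_X\otimes e_Y, \ldots)$ vanishes individually, which finishes Point~(2); the same checks carry over verbatim to several objects, since $e_{X\times Y}$ lies in the diagonal hom-space $\cat{A}(X,X)\otimes\cat{B}(Y,Y)$ and all composites respect composability of objects.
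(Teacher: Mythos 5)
The paper itself offers no argument here (it simply declares the proof straightforward), so there is nothing to compare your route against; judged on its own terms, your strategy is the right one --- reduce to hom-complexes, use $\diagainf(\black{\bullet\bullet})=\black{\bullet\bullet}\otimes\black{\bullet\bullet}$ for $m_2$, and kill the terms of $m_{n\geq 3}$ tree-pair by tree-pair --- but each point contains a genuine gap. In Point~(1), the claim that ``the tensor product of two maps inducing the identity on homology again induces the identity on homology'' is false over $\ZZ$: naturality of the K\"unneth sequence only says that $H(f\otimes g)-\id$ kills the subgroup $H\otimes H$ and lands in it, i.e.\ is square-zero, not that it vanishes. Concretely, take $A_1=\ZZ x\oplus \ZZ y$, $A_0=\ZZ z$, $dx=2z$, $dy=0$, $f(x)=x+y$, $f(y)=y$, $f(z)=z$; then $H(f)=\id$, but on $A\otimes(\ZZ\xrightarrow{2}\ZZ)$ the Tor class $[x\otimes v-z\otimes u]$ is sent to itself plus $[y\otimes v]\neq 0$. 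The conclusion survives, but by a different argument: left and right multiplication by $[e_X\otimes e_Y]$ are of the form $\id+\varepsilon$ with $\varepsilon^2=0$, hence bijective on every hom-object, and an element of an associative semicategory acting bijectively on both sides yields an honest identity (take $\iota$ with $u\iota=u$ and cancel). Over a field the issue disappears entirely.

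In Point~(2), your reduction is correct and carefully stated --- a term $\pm\, m_{\mathcal{N}}\otimes m_{\mathcal{N}'}$ survives the unit only if the marked leaf is a direct child of a binary vertex in \emph{both} trees --- but the combinatorial lemma that no such pair occurs in $\Ima\diagainf$ in complementary dimensions is precisely where all the mathematical content sits, and ``the forced bracketings near that leaf would produce a Tamari incompatibility'' is an assertion, not a proof. The difficulty is real: comparing $\tp\mathcal{N}$ and $\bm\mathcal{N}'$ only at the one or two coordinates adjacent to the offending leaf yields a satisfiable inequality (for leaf $1$ it merely bounds the size of one sibling subtree by the other), so the contradiction must be extracted globally, playing the local binary-vertex condition against the complementarity $|\mathcal{N}|+|\mathcal{N}'|=n$ and the remaining covering relations. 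The lemma does check out against the explicit expansions of $\diagainf$ in arities $3$ and $4$ given in the paper, and your remark that the image condition (rather than mere complementarity) is indispensable is correct and worth keeping; but as written, Point~(2) rests on an unproved combinatorial statement.
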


If $\cat{A}$ and $\cat{B}$ are homotopy unital \Ainf -categories, we have to define the additional operations associated to the fact that the elements $e_X \otimes e_Y$ are identity morphisms up to homotopy in order to endow the \Ainf -category $\cat{A} \otimes \cat{B}$ with a homotopy unital \Ainf -category structure. In other words, we have to define a diagonal on the operad \uAinf\ encoding homotopy unital \Ainf -algebras, which has not been done yet to the authors knowledge. An idea would be to define a diagonal on the unital associahedra, which are CW-complexes constructed by Muro and Tonks in \cite{MuroTonks} and which form an operad whose image under the cellular chains is the operad \uAinf\ . However, not all unital associahedra are polytopes, meaning that the present techniques cannot be directly applied to them. 

\subsection{Homotopy properties of diagonals on \Ainf\ and \Minf } \label{ss:homotopy-properties}
The goal of this section is to show that the category of \Ainf -algebras is symmetric monoidal only up to homotopy.

\subsubsection{The 2-colored viewpoint}

The operad \Ainf\ together with the operadic bimodule \Minf\ define the quasi-free 2-colored operad 
\[ A_\infty^2 := \left( \mathcal{T} (\arbreopdeuxcol{Red!60} , \arbreoptroiscol{Red!60} , \arbreopquatrecol{Red!60}, \cdots, \arbreopdeuxcol{MidnightBlue} , \arbreoptroiscol{MidnightBlue} , \arbreopquatrecol{MidnightBlue} , \cdots, \arbreopunmorph , \arbreopdeuxmorph , \arbreoptroismorph , \arbreopquatremorph , \cdots )  , \partial \right) \ , \]
whose differential is given by the equations of \cref{def:ainf-alg} and \cref{def:ainf-morph}.
We refer to \cite[Section 11]{yau-colored} for a complete definition of a 2-colored operad.
The data of \Ainf -algebra structures on two dg modules $A$ and $B$ together with an \Ainf -morphism $A \rightsquigarrow B$ between them is equivalent to a morphism of 2-colored operads $\Ainfdeux \longrightarrow \End ( A \text{\hspace{2pt}} ; B) $, where $\End ( A ; B)$ is the \textit{endomorphism 2-colored operad} naturally associated to $A$ and $B$.
The data of a diagonal on the operad \Ainf\ and of a diagonal on the operadic bimodule \Minf\ is moreover equivalent to the datum of a morphism of 2-colored operads $\Ainfdeux \longrightarrow \Ainfdeux \otimes \Ainfdeux$, while the composition of \Ainf -morphisms can be defined by a morphism of 2-colored operads $\Ainfdeux \longrightarrow \Ainfdeux \circ_{\Ainf} \Ainfdeux $.

\subsubsection{Coassociativity and cocommutativity} \label{sss:coassoc-cocomm}

First, we would like to know whether given three \Ainf -algebras $A$, $B$ and $C$, the two \Ainf -algebra structures $( A \otimes B) \otimes C$ and $A \otimes ( B \otimes C)$ on the dg module $A \otimes B \otimes C$ are the same. 
In operadic terms, this amounts to ask if the diagonal on \Ainf\ is coassociative.

\begin{proposition} $ $
  \label{prop:nocoassoc}
  \begin{enumerate}[leftmargin=*]
  \item There is no diagonal on the operad \Ainf\ which is coassociative. 
  \item There is no diagonal on the operadic bimodule \Minf\ which is coassociative.
  \end{enumerate}
\end{proposition}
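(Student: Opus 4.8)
The plan is to establish (1) first and then deduce (2) from it. Recall that a diagonal on $\Ainf$ is a morphism of dg operads $\triangle : \Ainf \to \Ainf \otimes \Ainf$, hence is entirely determined by the images $\triangle(\mu_n)$ of the generating operation $\mu_n$ of arity $n$ and degree $n-2$, subject to the chain-map condition $\partial\triangle(\mu_n) = \triangle(\partial\mu_n)$ together with the normalization $\triangle(\mu_2)=\mu_2\otimes\mu_2$. Coassociativity of $\triangle$ is precisely the equality of operad morphisms $(\triangle\otimes\id)\triangle = (\id\otimes\triangle)\triangle : \Ainf \to \Ainf^{\otimes 3}$. Since the $\Ainf$-algebra structure on $(A\otimes B)\otimes C$ (resp. $A\otimes(B\otimes C)$) is obtained by pulling back the endomorphism operad along $(\triangle\otimes\id)\triangle$ (resp. $(\id\otimes\triangle)\triangle$), coassociativity of $\triangle$ is equivalent to strict associativity of the induced tensor product of $\Ainf$-algebras on all triples. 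I would first record this equivalence, since it connects the statement to \cite[Theorem 13]{MarklShnider06}.

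The concrete heart of the argument is a low-arity computation. Writing $a = \mu_2(\mu_2\otimes\id)$ and $b=\mu_2(\id\otimes\mu_2)$ for the two vertices of $\K_3$ and solving the linear system imposed by the chain-map condition, one finds that in arity $3$ every diagonal has the one-parameter form $\triangle(\mu_3)=t\,\mu_3\otimes a+(1-t)\,\mu_3\otimes b+(1-t)\,a\otimes\mu_3+t\,b\otimes\mu_3$ for a scalar $t$, the diagonal of \cref{prop:diagonal-polytopale-a-infini} being the case $t=0$. Evaluating the coassociativity equation on $\mu_3$ and comparing coefficients forces $t(1-t)=0$; thus arity $3$ alone does \emph{not} obstruct coassociativity, and the genuine obstruction must be sought one dimension higher, on the pentagon $\K_4$. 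The main obstacle is therefore the arity-$4$ step: for an arbitrary chain-level extension $\triangle(\mu_4)$ of the arity-$3$ data, I would expand both $(\triangle\otimes\id)\triangle(\mu_4)$ and $(\id\otimes\triangle)\triangle(\mu_4)$ and show that the coefficients of the fully split terms (those whose three factors are all $\K_2$-decomposable) produce an inconsistent linear system, so that no choice of $\triangle(\mu_4)$ can satisfy coassociativity. This is exactly the non-existence of a strictly associative tensor product of $\Ainf$-algebras, and via the equivalence of the previous paragraph the result of \cite[Theorem 13]{MarklShnider06} yields (1) at once; the direct pentagon computation gives a self-contained alternative.

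For (2), I expect that no further computation is needed. A diagonal on the operadic bimodule $\Minf$ is defined \emph{over} a chosen diagonal on $\Ainf$, in the sense that $\Minf\otimes\Minf$ carries its $(\Ainf,\Ainf)$-bimodule structure through that base diagonal. Consequently, the two iterates $(\triangle\otimes\id)\triangle$ and $(\id\otimes\triangle)\triangle$ of a $\Minf$-diagonal are morphisms into $\Minf^{\otimes 3}$ equipped with the two $(\Ainf^{\otimes 3},\Ainf^{\otimes 3})$-bimodule structures pulled back along $(\triangle\otimes\id)\triangle$ and $(\id\otimes\triangle)\triangle$ on $\Ainf$. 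For these two maps even to land in the \emph{same} bimodule, a prerequisite for their equality, the underlying base diagonal on $\Ainf$ must already be coassociative. By (1) this is impossible, so there is no coassociative diagonal on $\Minf$. As an alternative phrasing, one may restrict a hypothetical coassociative $\Minf$-diagonal to the sub-operad $\Ainf\hookrightarrow\Ainfdeux$ to recover a coassociative diagonal on $\Ainf$, again contradicting (1).
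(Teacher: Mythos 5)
Your proposal is correct and follows essentially the same route as the paper: part (1) is ultimately delegated to Markl--Shnider's Section 6 computation (your arity-3 parametrization and the observation that the obstruction first appears on the pentagon $\K_4$ is a faithful sketch of what that reference does), and part (2) is deduced exactly as in the paper, by noting that the two iterates of a diagonal on $\Minf$ land in $\Minf^{\otimes 3}$ equipped with the two $(\Ainf,\Ainf)$-bimodule structures pulled back along $(\triangle\otimes\id)\triangle$ and $(\id\otimes\triangle)\triangle$, which differ by (1). The only thing the paper adds beyond this is the stronger statement that the two iterates already disagree on $\purple{\bullet\bullet\bullet}$ as plain maps, which it also attributes to the same Markl--Shnider-type computation.
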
 

\begin{proof}
The non-existence of a coassociative diagonal on the operad \Ainf\ was already proven in \cite[Section 6]{MarklShnider06}.
The non-existence of a coassociative diagonal on the operad \Ainf\ implies the non-existence of a coassociative diagonal on the operadic bimodule \Minf . 
Given indeed diagonals $\triangle^{\Ainf}$ and $\triangle^{\Minf}$, it is not possible to compare the two morphisms of dg operadic bimodules $ ( \triangle^{\Minf} \otimes \ide^{\Minf} ) \triangle^{\Minf}$ and $(\ide^{\Minf} \otimes \triangle^{\Minf} )\triangle^{\Minf}$, as the $(\Ainf , \Ainf)$-operadic bimodule structures induced on $\Minf^{\otimes 3}$ by $ ( \triangle^{\Ainf} \otimes \ide^{\Ainf} ) \triangle^{\Ainf}$ and $(\ide^{\Ainf} \otimes \triangle^{\Ainf} ) \triangle^{\Ainf}$ do not coincide.
We can in fact prove a stronger result:  for any diagonal $\triangle : \Minf \to \Minf \otimes \Minf$, we have that
\[ \left( (\ide \otimes \triangle ) \triangle - (\triangle \otimes \ide) \triangle \right) \left( \purple{\bullet \bullet \bullet} \right) \neq 0 \ . \]
The proof of this result involves computations identical to the ones of \cite[Section 6]{MarklShnider06}, that we do not include for the sake of concision. 
\end{proof}

This proposition implies in particular that a diagonal on the 2-colored operad \Ainfdeux\ is never coassociative.
In the specific cases of \diagainf\ and \diagminf\ we compute moreover that
  \begin{align*}
      &\left( (\ide \otimes \diagainf ) \diagainf - (\diagainf \otimes \ide) \diagainf \right) \left( \black{ \bullet \bullet \bullet \bullet } \right)  \\
      = \  & - \partial \left( \black{ \black{ \bullet \bullet \bullet } \bullet }  \otimes \black{ \bullet \black{ \bullet \bullet } \bullet } \otimes  \black{ \bullet \black{ \bullet \bullet \bullet } } \right) \ ,
  \end{align*}
and that 
\begin{align*}
      &\left( (\ide \otimes \diagminf ) \diagminf - (\diagminf \otimes \ide) \diagminf \right) \left( \purple{\bullet \bullet \bullet} \right)  \\
      = \ &\partial \left( \blue{\bullet \bullet \bullet} \otimes \purple{ \bullet \blue{\bullet \bullet}} \otimes \red{\bullet \purple{\bullet \bullet}}
    - \purple{\blue{\bullet \bullet} \bullet} \otimes \red{\purple{\bullet \bullet} \bullet} \otimes \red{\bullet \bullet \bullet}
  \right) \ .
\end{align*} 
  
Given two \Ainf -algebras $A$ and $B$, we would also like to know whether the \Ainf -algebra structure on $B \otimes A$ can simply be obtained from the maps defining the \Ainf -algebra structure on $A \otimes B$ \[ m_n^{A \otimes B} : ( A \otimes B)^{\otimes n} \rightarrow A \otimes B \] by rearranging $(A \otimes B)^{\otimes n}$ into $(B \otimes A)^{\otimes n}$ and $A \otimes B$ into $B \otimes A$. 
In operadic terms, this amounts to ask if the diagonal on \Ainf\ is cocommutative or not. 

\begin{proposition} \label{prop:not-cocomm}
The diagonals \diagainf\ and \diagminf\ are not cocommutative.
\end{proposition}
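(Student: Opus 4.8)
The plan is to prove non-cocommutativity by inspecting the arity-$3$ formulas already displayed above, since a diagonal is cocommutative precisely when it is left unchanged by the signed swap of the two tensor factors, and it suffices to exhibit a single arity in which this symmetry fails. Recall that $\diagainf$ (resp. $\diagminf$) is cocommutative if $\tau \circ \diagainf = \diagainf$ (resp. $\tau \circ \diagminf = \diagminf$), where $\tau(x \otimes y) = (-1)^{|x||y|} y \otimes x$ is the transposition of the two factors. In arity $1$ and $2$ both diagonals are trivially cocommutative, so I would pass directly to arity $3$.

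For $\diagainf$, I would single out the term $\black{\black{\bullet \bullet} \bullet} \otimes \black{\bullet \bullet \bullet}$ of the computed expansion of $\diagainf(\black{\bullet \bullet \bullet})$. Its image under $\tau$ is a nonzero multiple of $\black{\bullet \bullet \bullet} \otimes \black{\black{\bullet \bullet} \bullet}$. Now the only term of $\diagainf(\black{\bullet \bullet \bullet})$ whose left factor is the top cell $\black{\bullet \bullet \bullet}$ is $\black{\bullet \bullet \bullet} \otimes \black{ \bullet \black{\bullet \bullet}}$, whose right factor is the \emph{right} comb, distinct from the \emph{left} comb $\black{\black{\bullet \bullet} \bullet}$. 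Hence $\black{\bullet \bullet \bullet} \otimes \black{\black{\bullet \bullet} \bullet}$ does not occur in $\diagainf(\black{\bullet \bullet \bullet})$, so its coefficient in $\tau \circ \diagainf - \diagainf$ is nonzero. Note that signs play no role here, since we are comparing the coefficients of two distinct basis elements. This already shows $\tau \circ \diagainf \neq \diagainf$.

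For $\diagminf$ the same strategy applies to the term $\blue{\blue{\bullet \bullet}\bullet} \otimes \purple{\bullet \bullet \bullet}$ of $\diagminf(\purple{\bullet \bullet \bullet})$. Its transpose is a nonzero multiple of $\purple{\bullet \bullet \bullet} \otimes \blue{\blue{\bullet \bullet}\bullet}$, whose right factor is the monochrome blue tree $\blue{\blue{\bullet \bullet}\bullet}$. Reading off the computed expansion of $\diagminf(\purple{\bullet \bullet \bullet})$, every right tensor factor contains a red or purple nest, so no right factor is monochrome blue; consequently $\purple{\bullet \bullet \bullet} \otimes \blue{\blue{\bullet \bullet}\bullet}$ is absent and $\tau \circ \diagminf \neq \diagminf$. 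I would emphasize that this asymmetry is structural rather than accidental: in the $2$-colored formalism the left-hand copy of $\Minf$ carries the blue (upper) color and the right-hand copy the red (lower) color, so a fully blue operation can only appear on the left.

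The argument presents no genuine obstacle; the only point requiring care is to certify that the chosen transposed term is truly \emph{absent} from the expansion, which I would establish by a direct reading of the displayed arity-$3$ formulas rather than by any further computation. As a conceptual complement, I would also remark that this failure is forced geometrically: by \cite[Proposition 1.17]{LA21} every pair $(F,G) \in \Ima \triangle$ satisfies $\tp F \leq \bm G$, so cocommutativity would require the cellular image to be stable under $(F,G) \mapsto (G,F)$, hence to satisfy $\tp G \leq \bm F$ as well — a constraint already violated by the pair $(\black{\black{\bullet \bullet} \bullet}, \black{\bullet \bullet \bullet})$ treated above.
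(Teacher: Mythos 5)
Your argument is correct and, for $\diagainf$, it is essentially the paper's: both read off the displayed arity-$3$ expansion and observe that it is not $\tau$-invariant. The paper goes one step further and identifies the full difference $(\diagainf-\tau\diagainf)(\black{\bullet\bullet\bullet})$ as the explicit boundary $\partial(\black{\bullet\bullet\bullet}\otimes\black{\bullet\bullet\bullet})$ --- information not needed for the present proposition, but which feeds directly into the ``cocommutative up to homotopy'' statement of \cref{th:homotopy-properties}. For $\diagminf$, however, your preliminary claim that both diagonals are trivially cocommutative in arities $1$ and $2$ is false: $\diagminf(\purple{\bullet\bullet})=\blue{\bullet\bullet}\otimes\purple{\bullet\bullet}+\purple{\bullet\bullet}\otimes\red{\bullet\bullet}$ is visibly not invariant under the swap, whose image is $\purple{\bullet\bullet}\otimes\blue{\bullet\bullet}+\red{\bullet\bullet}\otimes\purple{\bullet\bullet}$; indeed the paper's proof for $\diagminf$ uses precisely this arity-$2$ failure, computing $(\diagminf-\tau\diagminf)(\purple{\bullet\bullet})=\partial(\purple{\bullet\bullet}\otimes\purple{\bullet\bullet})$. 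This slip is not load-bearing --- your arity-$3$ inspection for $\diagminf$ is correct on its own, since no right-hand tensor factor in the displayed expansion of $\diagminf(\purple{\bullet\bullet\bullet})$ is monochrome blue --- but you should delete or correct the arity-$2$ remark, which in fact hands you a shorter counterexample. Your closing geometric observation via the condition $\tp F\leq\bm G$ of \cite[Proposition 1.17]{LA21} is a valid sanity check, since for the associahedra this condition characterizes the image exactly and the swapped pair violates it.
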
 

\begin{proof}
We compute indeed that
\[ \left( \diagainf - \tau \diagainf \right) \left( \black{ \bullet \bullet \bullet } \right) = \partial \left( \black{ \bullet \bullet \bullet } \otimes \black{ \bullet \bullet \bullet } \right) \ , \]
where $\tau$ acts by the permutation $(1 \ 2)$ on the operad $\Ainf \otimes \Ainf$.
We also compute that
\[ \left( \diagminf - \tau \diagminf \right) \left( \purple{\bullet \bullet} \right) = \partial \left( \purple{\bullet \bullet} \otimes \purple{\bullet \bullet} \right) \ . \]
\end{proof}

\noindent We conjecture in fact that \cref{prop:not-cocomm} holds for any diagonal on the operad \Ainf\ and for any diagonal on the operadic bimodule \Minf .
 
\subsubsection{Compatibility with the composition} \label{sss:comp-composition}

We would finally like to know whether the tensor product is functorial with respect to the composition of \Ainf -morphisms. 
In other words, if given four \Ainf -morphisms $F_1 : A_1 \rightsquigarrow B_1$, 
$G_1 : B_1 \rightsquigarrow C_1$, $F_2 : A_2 \rightsquigarrow B_2$ and
$G_2 : B_2 \rightsquigarrow C_2$ they satisfy the following equality
\[ ( G_1 \otimes F_1) \circ (G_2 \otimes F_2) = (G_1 \otimes G_2) \circ (F_1 \otimes F_2) \ . \]
In operadic terms, this amounts to ask if the diagonal $\triangle$ on \Minf\ together with the composition morphism \comp\ of \cref{sss:operad-ainf-operadic-bimod-minf} satisfy the following equality 
\[ ( \comp \otimes \comp ) \triangle = (\triangle \circ_{\Ainf} \triangle ) \comp   \ . \]

\begin{proposition} 
  \label{thm:nofunctorial}
  There is no diagonal on the operadic bimodule \Minf\ which is compatible with the composition of \Ainf -morphisms.  
\end{proposition}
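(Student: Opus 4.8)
The plan is to evaluate the purported compatibility equation
\[ (\comp\otimes\comp)\triangle = (\triangle\circ_{\Ainf}\triangle)\comp \]
on the generators of $\Minf$ and to show that it already fails in arity $2$ for \emph{every} diagonal. Since $\Minf$ is quasi-free and both $\comp$ and $\triangle$ are morphisms of dg operadic bimodules fixing the arity-$1$ generator ($\triangle(f_1)=f_1\otimes f_1$ and $\comp(f_1)=g_1 f_1$), the equation holds automatically in arity $1$, so the first constraint occurs on $f_2=\purple{\bullet\bullet}$. First I would record that the chain-map and bimodule axioms pin down $\triangle(f_2)$ up to a single scalar $a$: writing $\mu=\blue{\bullet\bullet}$ for $f_1 m_2$ and $\nu=\red{\bullet\bullet}$ for $m_2'(f_1,f_1)$, so that $\partial f_2=\mu-\nu$, the degree-$1$, arity-$2$ elements of $\Minf\otimes\Minf$ are spanned by $f_2\otimes\mu,\, f_2\otimes\nu,\, \mu\otimes f_2,\, \nu\otimes f_2$, and solving $\partial\triangle(f_2)=\triangle(\partial f_2)=\mu\otimes\mu-\nu\otimes\nu$ forces
\[ \triangle(f_2)=a\,f_2\otimes\mu+(1-a)\,f_2\otimes\nu+(1-a)\,\mu\otimes f_2+a\,\nu\otimes f_2 \ . \]
The identical computation applies to $\triangle(g_2)$ with the \emph{same} parameter $a$, because $\triangle\circ_{\Ainf}\triangle$ uses one and the same map on both factors of $\Minf\circ_{\Ainf}\Minf$.

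Next I would expand both sides using $\comp(f_2)=g_1 f_2+g_2(f_1,f_1)$. The three degree-$0$, arity-$2$ elements of $\Minf\circ_{\Ainf}\Minf$ are the three vertices of the subdivided interval $(\Minf\circ_{\Ainf}\Minf)(2)$, namely $\bar\mu=g_1 f_1 m_2$, the \emph{intermediate} vertex $\bar\rho=g_1\bigl(m_2'(f_1,f_1)\bigr)$, and $\bar\nu=m_2''(g_1 f_1,\, g_1 f_1)$. The crucial structural observation is that the left-hand side $(\comp\otimes\comp)\triangle(f_2)$ only ever produces the degree-$0$ tensor factors $\comp(\mu)=\bar\mu$ and $\comp(\nu)=\bar\nu$, and never the intermediate vertex $\bar\rho$; whereas the right-hand side $(\triangle\circ_{\Ainf}\triangle)\comp(f_2)$, obtained by applying the diagonal \emph{after} the composition, does create $\bar\rho$, since $g_1\circ\bigl(m_2'(f_1,f_1)\bigr)=\bar\rho$ arises both from the second factor of $\triangle(g_1 f_2)=\triangle(g_1)\circ\triangle(f_2)$ and from $\triangle(g_2(f_1,f_1))=\triangle(g_2)\circ(\triangle f_1,\triangle f_1)$.

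Making this precise, the right-hand side contains, among other terms,
\[ (1-a)\,(g_1 f_2)\otimes\bar\rho \; + \; a\,\bar\rho\otimes(g_1 f_2) \; + \cdots , \]
none of whose $\bar\rho$-components occur on the left. Equating coefficients of the linearly independent basis elements $(g_1 f_2)\otimes\bar\rho$ and $\bar\rho\otimes(g_1 f_2)$ then forces $1-a=0$ and $a=0$ simultaneously, which is impossible; since the left-hand side contains these vectors with coefficient exactly $0$, no choice of Koszul signs can repair this, and since the discrepancy involves only $\triangle(f_1),\triangle(f_2),\triangle(g_1),\triangle(g_2)$, no freedom in the higher generators is relevant. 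This proves that no compatible diagonal exists. I expect the main obstacle to be purely organizational: carefully unwinding the relative composite product $\circ_{\Ainf}$ and the interchange isomorphism $(\Minf\otimes\Minf)\circ_{\Ainf\otimes\Ainf}(\Minf\otimes\Minf)\cong(\Minf\circ_{\Ainf}\Minf)^{\otimes 2}$ with the correct signs, so that the appearance of the middle vertex $\bar\rho$ on one side but not the other is rigorously exhibited rather than merely heuristic.
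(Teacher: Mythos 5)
Your proposal is correct and takes essentially the same route as the paper's own proof: you reduce to arity $2$, use the chain-map condition to pin $\triangle(\purple{\bullet\bullet})$ down to a one-parameter family, and extract the contradictory constraints $a=0$ and $a=1$ from the compatibility equation $(\comp\otimes\comp)\triangle=(\triangle\circ_{\Ainf}\triangle)\comp$ evaluated on that generator (your $a$ is the paper's $1-\alpha$). Your identification of the intermediate vertex $\bar\rho=g_1\bigl(m_2'(f_1,f_1)\bigr)$ of the subdivided interval as the precise source of the obstruction simply makes explicit the computation the paper leaves to the reader, and your checks that the relevant Koszul signs are trivial and that higher generators play no role are both accurate.
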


\begin{proof} 
  Let $\triangle$ be a diagonal $\Minf \rightarrow \Minf \otimes \Minf$.
  The compatibility with the differential implies that $\triangle$ is necessarily of the form
  \[
      \triangle(\purple{\bullet }) = \purple{\bullet } \otimes \purple{\bullet } \]
and
\[ \begin{matrix}
      \triangle (\purple{\bullet \bullet }) &= &\alpha (\blue{\bullet \bullet }\otimes \purple{\bullet \bullet } + \purple{\bullet \bullet }\otimes\red{\bullet \bullet }) \\ & &+ \ (1-\alpha)(\red{\bullet \bullet }\otimes \purple{\bullet \bullet }+\purple{\bullet \bullet}\otimes \blue{\bullet \bullet}) \ ,
  \end{matrix} \]
  where $\alpha \in \mathbb{Z}$.
  We compute that if the equality
  \[ ( \comp \otimes \comp ) \triangle ( \purple{\bullet \bullet} ) = (\triangle \circ_{\Ainf} \triangle ) \comp  ( \purple{\bullet \bullet} ) \]
  holds, we necessarily have that $\alpha = 0$ and that $\alpha =1$, which is not possible.
\end{proof}

\noindent In the case of the diagonals \diagainf\ and \diagminf , we compute that 
  \[ \left( \comp  \circ \diagminf - (\diagminf \circ_{\Ainf} \diagminf ) \circ \comp \right) \left( \arbreopdeuxmorph  \right) = \partial \left( \arbreopcompun \otimes \arbreopcompdeux \right)  \ . \]

\subsubsection{Homotopy properties}

While coassociativity, compatibility with the composition and cocommutativity are not satisfied by the diagonals \diagainf\ and \diagminf , we will now prove that a diagonal on the 2-colored operad \Ainfdeux\ always satisfies these properties up to homotopy.
We use the notion of homotopy between morphisms of 2-colored operads as defined in \cite[Section 3.10]{MSS}. 

\begin{proposition} 
\label{th:homotopy-properties}
Let $\triangle$ be a diagonal on the 2-colored operad \Ainfdeux . 
\begin{enumerate}
    \item The morphisms of operads $(\triangle \otimes \ide ) \triangle)$ and $(\ide \otimes \triangle) \triangle)$ are homotopic. In other words, a diagonal on \Ainfdeux\ is always coassociative up to homotopy.
    \item The morphisms of operads $\triangle$ and $\tau \triangle$ are homotopic. In other words, a diagonal on \Ainfdeux\ is always cocommutative up to homotopy.
    \item The morphisms of operads $\comp  \circ \diagminf$ and $( \diagminf \circ_{\Ainf} \diagminf ) \circ \comp$ are homotopic. In other words, a diagonal on \Ainfdeux\ is always compatible with the composition of \Ainf -morphisms up to homotopy.
\end{enumerate}
\end{proposition}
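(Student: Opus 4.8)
The plan is to deduce all three coherences from a single principle: $\Ainfdeux$ is quasi-free, hence cofibrant, and each target in question is \emph{formal}, its homology being the strict $2$-colored operad on which the induced diagonal satisfies the corresponding property on the nose. Recall from \cite[Section 3.10]{MSS} that a homotopy between two morphisms of dg $2$-colored operads out of a quasi-free operad is a degree $+1$ derivation homotopy, determined by its values on the generators. The key homotopical input is the standard fact that, for $\Ainfdeux$ cofibrant (and all dg operads here being fibrant), a quasi-isomorphism of the target induces a bijection on homotopy classes of morphisms out of $\Ainfdeux$; in particular two morphisms are homotopic as soon as they become homotopic after composition with such a quasi-isomorphism. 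The underlying dg modules here are non-negatively graded and degreewise free over $\ZZ$, so this input, together with the Künneth isomorphism, applies without Tor corrections.

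First I would record that the homology $H_*(\Ainfdeux)$ is concentrated in degree zero and equals the $2$-colored operad $\mathrm{As}^2$ governing a pair of associative algebras together with an algebra morphism between them, and that the projection $p \colon \Ainfdeux \to \mathrm{As}^2$ is a quasi-isomorphism of dg operads. Applying $p$ levelwise shows that each relevant target is formal: the Hadamard products $\Ainfdeux \otimes \Ainfdeux$ and $\Ainfdeux \otimes \Ainfdeux \otimes \Ainfdeux$ project quasi-isomorphically onto $\mathrm{As}^2 \otimes \mathrm{As}^2$ and $\mathrm{As}^2 \otimes \mathrm{As}^2 \otimes \mathrm{As}^2$, and likewise for the relative composite product appearing in item~(3). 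On the degree-zero operad $\mathrm{As}^2$, the induced diagonal is \emph{strictly} coassociative and cocommutative and \emph{strictly} compatible with composition: these are exactly the classical facts that the tensor product of dg algebras is strictly associative and commutative and that it is functorial with respect to composition of algebra morphisms, i.e. $(g_1 \otimes g_2) \circ (f_1 \otimes f_2) = (g_1 \circ f_1) \otimes (g_2 \circ f_2)$.

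It then remains to observe that in each case the two morphisms being compared agree after projecting to homology. For (1) both $(\triangle \otimes \ide)\triangle$ and $(\ide \otimes \triangle)\triangle$ project to the two iterates of the strictly coassociative diagonal of $\mathrm{As}^2$, which coincide; for (2) both $\triangle$ and $\tau\triangle$ project to the diagonal of $\mathrm{As}^2$ and its symmetry, which coincide by strict cocommutativity; and for (3) both $\comp \circ \diagminf$ and $(\diagminf \circ_{\Ainf} \diagminf) \circ \comp$ project to the same map by strict functoriality of $\otimes$. Since the homology operads sit in degree zero, homotopic and equal coincide there, so the two projected morphisms are homotopic; lifting this homotopy back through the formality quasi-isomorphism yields the desired homotopy upstairs. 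The main obstacle is the relative composite product case~(3): one must verify that $\Minf \circ_{\Ainf} \Minf$, and the relevant tensor thereof, is formal and that $\circ_{\Ainf}$ commutes with passing to homology here — this is where the degreewise freeness over $\ZZ$ and the vanishing of Tor are essential, and where the bookkeeping of the $2$-colored and bimodule structures is most delicate. The explicit boundaries displayed in the discussion preceding \cref{prop:not-cocomm} and following \cref{thm:nofunctorial} exhibit the leading terms of these homotopies in low arity and serve as a consistency check.
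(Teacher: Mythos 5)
Your argument is essentially the paper's: the paper's proof is a one-line citation to \cite[Section 2]{MarklShnider06}, whose content is precisely the homotopy-uniqueness principle you spell out — morphisms out of the quasi-free (cofibrant) minimal model $\Ainfdeux$ of $As^2$ are homotopic as soon as they agree after projecting to the formal, degree-zero target, where coassociativity, cocommutativity and compatibility with composition hold strictly. Your elaboration is correct, and the one point you flag as delicate (formality of $\Minf \circ_{\Ainf} \Minf$ and compatibility of the relative composite product with homology, which holds here because $\Minf$ is quasi-free over $\Ainf$ with degreewise free components over $\ZZ$) is indeed the only step requiring care beyond the cited reference.
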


\begin{proof}
The proof of this proposition is a simple adaptation of the results of \cite[Section 2]{MarklShnider06} in the context of 2-colored dg operads, applied to the minimal model \Ainfdeux\ for the 2-colored dg operad $As^2$ encoding pairs of dg algebras together with morphisms between them.
\end{proof}

While \cref{thm:nofunctorial} shows that it is not possible to endow the category $\infAalg$ with a symmetric monoidal category structure using the viewpoint of diagonals, \cref{th:homotopy-properties} exhibits a first level of homotopies that could be involved in the definition of some kind of \textit{homotopy symmetric monoidal} category structure on \infAalg . 
This question will be studied in a future work by D. Poliakova and the two authors of this paper. As a first step towards solving that problem, we will inspect in particular which higher coherent homotopies arise from the lack of coassociativity of $\triangle^{K_n}$ and $\triangle^{J_n}$ on the level of polytopes. 


\section{Further applications} \label{sec:V}

We first prove that a diagonal on the dg operad \Ainf\ is equivalent to a retraction of the bar-cobar resolution $\AAinf$ onto the operad \Ainf\ . 
We then explain how to associate a convolution \Ainf -algebra to an \Ainf -coalgebra and an \Ainf -algebra, as well as \Ainf -morphisms between convolution \Ainf -algebras, using diagonals on \Ainf\ and \Minf .
We finally describe two possible applications of our results in symplectic topology: in the context of Heegard Floer homology, and to study tensor products of Fukaya categories/algebras and \Ainf -functors between them.

\subsection{Retractions and diagonals} \label{ss:retract-diag}

Recall that the operad \Ainf\ is the minimal model $\Ainf =\Omega As^{\text{!`}}$ of the dg operad $As$ encoding associative algebras. 
Another cofibrant replacement of the operad $As$ is given by the bar-cobar (or Boardman-Vogt) resolution $\AAinf := \Omega B As$, which is defined as the quasi-free operad 
\[ \AAinf := \left( \mathcal{T} (\premiertermecobarbarA , \premiertermecobarbarD , \premiertermecobarbarB , \premiertermecobarbarC , \cdots , \mathrm{PT_n} \text{\hspace{2pt}}, \cdots ) , \partial \right) \ , \] 
where $\mathrm{PT_n}$ is the set of planar rooted trees of arity $n$ and the degree of a tree is defined as the number of its internal edges.
We refer to \cite[Section 9.3]{LodayVallette12} for a complete study of the operad $\AAinf$, and in particular for a definition of its differential.
There exists an explicit embedding of dg operads $\Ainf \rightarrow \AAinf$, as constructed in \cite[Section 4]{MarklShnider06} and in \cite[Section 1.3.1.5]{mazuir-I}.
The problem of the construction of an explicit morphism of dg operads $\AAinf \rightarrow \Ainf$ is more complicated and is the subject of the following proposition. 

\begin{definition}[Retraction]
A morphism of dg operads $\AAinf \rightarrow \Ainf$ sending $\premiertermecobarbarA$ to $\premiertermecobarbarA$ will be called a \emph{retraction of the operad $\AAinf$ onto the operad \Ainf }.
\end{definition}

\begin{proposition}
\label{prop:retract}
The datum of a diagonal on the operad \Ainf\ is equivalent to the datum of a retraction $r : \AAinf \rightarrow \Ainf$.
\end{proposition}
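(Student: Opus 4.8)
The plan is to exhibit an explicit pair of mutually inverse assignments between diagonals $\triangle\colon\Ainf\to\Ainf\otimes\Ainf$ and retractions $r\colon\AAinf\to\Ainf$, using the canonical inclusion of dg operads $\iota\colon\Ainf\hookrightarrow\AAinf$ recalled just above together with a canonical coassociative diagonal on $\AAinf$ itself. The first point I would establish is that $\AAinf=\ombas$ carries a natural diagonal $\triangle_{\AAinf}\colon\AAinf\to\AAinf\otimes\AAinf$ sending the arity-two generator to its square: since $As$ is a Hopf operad, $BAs$ is a Hopf cooperad and the cobar construction transports this structure to a strict, coassociative diagonal on $\AAinf$. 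Concretely, under the identification of $\AAinf(n)$ with the cellular chains of the cubical Boardman--Vogt resolution (one interval coordinate per internal edge), $\triangle_{\AAinf}$ is the map induced by the Serre diagonal of the cubes. This is precisely the structural input that the quasi-free operad $\AAinf$ supplies for free and that $\Ainf$ lacks.

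Granting this, the forward assignment is immediate: to a retraction $r$ I associate
\[ \triangle_r \coloneqq (r\otimes r)\circ\triangle_{\AAinf}\circ\iota \colon \Ainf\longrightarrow\Ainf\otimes\Ainf, \]
which is a morphism of dg operads as a composite of such, and which sends the arity-two generator $\mu_2$ to $\mu_2\otimes\mu_2$ because $\iota$ and $\triangle_{\AAinf}$ do and $r$ preserves that generator by definition of a retraction; hence $\triangle_r$ is a diagonal on $\Ainf$ in the sense of the preceding definition. This construction also makes transparent why such a diagonal need not be coassociative even though $\triangle_{\AAinf}$ is: the maps $\iota$ and $r$ are not mutually inverse as coalgebra morphisms, so coassociativity survives only up to the homotopies recorded in \cref{th:homotopy-properties}.

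The substance of the proof is the converse assignment and the check that the two are inverse to one another. Here I would argue by induction on the arity and on the number of internal edges, constructing $r$ on the planar-tree generators of $\AAinf$ one cell at a time. At a generator $\tau$ the equation $\partial\, r(\tau)=r(\partial\tau)$ has its right-hand side already fixed by lower cells, and a solution $r(\tau)$ of the prescribed degree exists because $\Ainf(n)$ has vanishing homology in positive degrees, being a resolution of $As(n)$ concentrated in degree $0$; the diagonal $\triangle$ is exactly the data that rigidifies the otherwise free choice of $r(\tau)$, via the requirement $(r\otimes r)\triangle_{\AAinf}\iota=\triangle$. Verifying that this prescription is consistent with $\partial$ and returns $\triangle$ under the forward map reduces to a sign-and-incidence computation on the cubical cells, formally identical to the one carried out in \cite[Section~4]{MarklShnider06}, which I would invoke rather than reproduce. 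The main obstacle is precisely this converse step: one must show that the inductive lift is well defined --- i.e. that the relevant homology class of $r(\partial\tau)$ vanishes at each stage --- and rigid enough that $\triangle$ determines $r$ uniquely, so that the two assignments are genuinely inverse. The boundary condition fixing the arity-two generator, together with the normalization $\triangle_{\AAinf}(\mu_2)=\mu_2\otimes\mu_2$, is what starts the induction and keeps it compatible with the operadic structure throughout.
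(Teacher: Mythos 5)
Your forward assignment $r \mapsto (r\otimes r)\circ\triangle_{\AAinf}\circ\iota$ is correct and is essentially the Markl--Shnider construction, which the paper itself recalls right after the proposition. The genuine gap is in the converse, which you rightly identify as the substance of the proof but do not carry out. The proposition asserts an equivalence of \emph{data}, i.e.\ a bijection between the set of diagonals and the set of retractions, and your proposed inverse rests on the claim that the equation $(r\otimes r)\triangle_{\AAinf}\iota=\triangle$ admits a solution $r$ for every diagonal $\triangle$, and a unique one. Neither point is established: the equation is quadratic in $r$, and it constrains $r$ only through its values on the faces of the cubes occurring in $\iota$ of the generators, assembled into signed sums of tensor products, so injectivity of $r\mapsto\triangle_r$ is not at all apparent. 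The acyclicity of $\Ainf(n)$ in positive degrees gives existence of \emph{some} inductive lift $r(\tau)$, but the bounding chain is undetermined up to a cycle at every stage, and you give no mechanism by which $\triangle$ selects exactly one of these choices. Invoking \cite[Section~4]{MarklShnider06} does not close the gap, since that reference constructs one particular retraction and never addresses the bijectivity of the assignment.

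The paper's proof avoids writing down an explicit inverse altogether: it identifies both sets with sets of twisting morphisms via the bar--cobar adjunction, $\Hom_{\mathsf{Op}}(\Omega As^{\text{!`}},\Omega As^{\text{!`}}\otimes\Omega As^{\text{!`}})\cong\mathrm{Tw}(As^{\text{!`}},\Omega As^{\text{!`}}\otimes\Omega As^{\text{!`}})$ and $\Hom_{\mathsf{Op}}(\Omega B As,\Omega As^{\text{!`}})\cong\mathrm{Tw}(B As,\Omega As^{\text{!`}})$, and then exhibits an isomorphism of convolution dg operads $\Hom(As^{\text{!`}},\Omega As^{\text{!`}}\otimes\Omega As^{\text{!`}})\cong\mathcal{S}^{-1}(\Omega As^{\text{!`}}\otimes\Omega As^{\text{!`}})\cong\Hom(B As,\Omega As^{\text{!`}})$, using that $As^{\text{!`}}$ is one-dimensional in each arity and that $B As$ is the arity-wise linear dual of the desuspension of $\Omega As^{\text{!`}}$. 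The bijection on Maurer--Cartan elements is then automatic, and only the normalization on the arity-two generators remains to be checked. If you want to retain your explicit construction, you would need to prove separately that $r\mapsto(r\otimes r)\triangle_{\AAinf}\iota$ coincides with (or is at least a bijection like) this adjunction bijection; as written, your proposal establishes only one direction of the equivalence.
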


\begin{proof}
We apply the general theory of operadic twisting morphisms \cite[Section 6.4]{LodayVallette12} to prove the following sequence of isomorphisms: 
\begin{eqnarray*}
  \Hom_{\mathsf{Op}} (\Omega As^{\text{!`}}, \Omega As^{\text{!`}} \otimes \Omega As^{\text{!`}}) & \cong & \mathrm{Tw}(As^{\text{!`}}, \Omega As^{\text{!`}} \otimes \Omega As^{\text{!`}}) \\
  & \cong & \mathrm{Tw}(B As,\Omega As^{\text{!`}}) \\
  & \cong & \Hom_{\mathsf{Op}} (\Omega B As, \Omega As^{\text{!`}}) \ . 
\end{eqnarray*}
The first and last isomorphisms are given by the bar-cobar adjunction. We thus only need to explain the second isomorphism. 
A twisting morphism $As^{\text{!`}}\to \Omega As^{\text{!`}} \otimes \Omega As^{\text{!`}}$ is by definition a Maurer--Cartan element in the convolution pre-Lie algebra associated to the convolution dg operad $\Hom (As^{\text{!`}}, \Omega As^{\text{!`}} \otimes \Omega As^{\text{!`}})$.
This convolution dg operad is in turn isomorphic to the desuspension $\mathcal{S}^{-1}(\Omega As^{\text{!`}} \otimes \Omega As^{\text{!`}})$.
Since the cooperad $As^{\text{!`}}$ is 1-dimensional in every arity, and since the arity-wise linear dual dg cooperad of the desuspended dg operad $\mathcal{S}^{-1}(\Omega As^{\text{!`}})$ is isomorphic to the bar construction $B As$, we have that 
the desuspension $\mathcal{S}^{-1}(\Omega As^{\text{!`}} \otimes \Omega As^{\text{!`}})$ is isomorphic to the convolution dg operad $\Hom (B As, \Omega As^{\text{!`}})$. 
We hence have the following isomorphisms of dg operads
\[ \Hom (As^{\text{!`}}, \Omega As^{\text{!`}} \otimes \Omega As^{\text{!`}}) \cong \mathcal{S}^{-1}(\Omega As^{\text{!`}} \otimes \Omega As^{\text{!`}}) \cong \Hom (B As, \Omega As^{\text{!`}}) \ . \]
This implies an isomorphism on the level of the Maurer--Cartan elements of the associated dg pre-Lie algebras, that is
\[ \mathrm{Tw}(As^{\text{!`}}, \Omega As^{\text{!`}} \otimes \Omega As^{\text{!`}}) \cong \mathrm{Tw}(B As,\Omega As^{\text{!`}}) \ . \]
We finally check that the condition $\triangle (\premiertermecobarbarA) = \premiertermecobarbarA \otimes \premiertermecobarbarA$ is equivalent to the condition $r(\premiertermecobarbarA)=\premiertermecobarbarA$.
\end{proof}

\cref{prop:retract} clarifies in particular the construction of the diagonal on the operad \Ainf\ given in \cite{MarklShnider06}. 
The operad $\AAinf$ can indeed be seen as the cellular chains on the cubical realization of the associahedra \cite[Section 9.3.1]{LodayVallette12}. 
It comes with an elementary diagonal $\AAinf \rightarrow \AAinf \otimes \AAinf$ defined using the Serre cubical diagonal of \cite{Serre51}.
M. Markl and S. Shnider then define a retraction $r:\AAinf \rightarrow \Ainf$ and deduce a diagonal on the operad \Ainf\ as the composite
\[ \Ainf \longrightarrow \AAinf \longrightarrow \AAinf \otimes \AAinf \overset{r \otimes r}{\longrightarrow} \Ainf \otimes \Ainf \ . \]
Their choice of retraction recovers the diagonal constructed directly on the level of the associahedra in \cite[Theorem 2]{MTTV19}.
A similar proof would however not adapt to the case of the multiplihedra, as they are not simple polytopes hence do not admit a cubical realization.

\begin{remark}
\label{rem:Morse}
As observed in \cite[Remark 1.6]{LA21}, the methods used to construct our cellular approximation of the diagonal could be related to the Fulton--Sturmfels formula \cite[Theorem 4.2]{FultonSturmfels97}, appearing in the study of the intersection theory on toric varieties.
We also expect an interpretation of \cref{prop:retract} in terms of Morse theory, in the vein of \cite{FriedmanMardonesSinha21,Frankland07}. 
There should also be an interpretation in terms of discrete Morse theory as in \cite[Section 1.1.4]{Thorngren18} for the case of the standard simplices.  
\end{remark}

\subsection{Convolution \Ainf -algebra} \label{ss:conv-ainf-alg}

\subsubsection{Standard convolution algebra}

Given a dg algebra $A$ and a dg coalgebra $C$, recall from  \cite[Section 1.6]{LodayVallette12} that one can define the \textit{convolution algebra} of $C$ and $A$ as the dg algebra $(\Hom (C,A) , [ \partial , \cdot ] , \star)$, where $\Hom (C,A)$
is the dg module of maps $C \rightarrow A$, endowed with the convolution product $f \star g := \mu_A \circ ( f \otimes g) \circ \Delta_C$. 
The convolution algebra construction is in fact functorial, i.e. fits into a bifunctor $\mathsf{(dg-cog)^{op}} \times \mathsf{dg-alg} \rightarrow \mathsf{dg-alg}$ defined on objects as $(C,A) \mapsto \Hom (C,A)$.
A Maurer-Cartan element $\alpha$ of $\Hom (C,A)$, i.e. a map $\alpha : C \rightarrow A$ such that 
$[ \partial , \alpha ] + \alpha \star \alpha  = 0$,
is then called a \emph{twisting morphism}. 
Twisting morphisms define twisted differentials on the tensor product $C \otimes A$ via the formula
\[ \partial_\alpha := \partial_{C \otimes A} + (\ide \otimes \mu_A ) ( \ide \otimes \alpha \otimes \ide ) ( \Delta_C \otimes \ide ) \ . \]

Twisted differentials appear in the computation of the singular homology of fiber spaces \cite{Brown59}. 
Given a fibration $F \rightarrow X \rightarrow B$ satisfying some mild assumptions, the singular homology of $X$ can then be computed as the homology of the tensor product $C_*(B) \otimes C_*(F)$ endowed with a twisted differential, where $C_*(F)$ is seen as a dg module over the dg algebra $C_*(\Omega B)$.

\subsubsection{Convolution \Ainf -algebra} \label{sss:conv-ainf-alg}

One defines an \textit{\Ainf -coalgebra} structure on a dg module $C$ to be a morphism of dg operads $\Ainf \rightarrow \coEnd_C$, where $\coEnd_C(n) = \Hom ( C , C^{\otimes n} )$. 
Put differently, it is the structure dual to the structure of \Ainf -algebra, i.e. it corresponds to a collection of operations $c_n : C \rightarrow C^{\otimes n}$ of degree $n-2$ satisfying the equations obtained by inverting inputs and outputs in the equations for \Ainf -algebras. 
The notion of an \Ainf -morphism between \Ainf -coalgebras is defined in a similar fashion: either in terms of operations $f_n : C \rightarrow D^{\otimes n}$ of degree $n-1$ and satisfying the equations dual to the equations for \Ainf -morphisms, or equivalently as a morphism of dg operadic bimodules $\Minf \rightarrow \coHom^{C_1}_{C_2}$.
Our results allow us to extend the convolution algebra construction when $C$ is an \Ainf -coalgebra and $A$ is an \Ainf -algebra.

\begin{proposition} 
\label{prop:convolution-ainf} $ $
\begin{enumerate}[leftmargin=*]
    \item Let $C$ be an \Ainf -coalgebra and $A$ be an \Ainf -algebra. 
A diagonal on the operad \Ainf\ yields an \Ainf -algebra structure on the dg module $(\Hom (C,A) , [\partial,\cdot])$. 
We call this \Ainf -algebra the \emph{convolution \Ainf -algebra of $C$ and $A$}.
\item Let $F : A_1 \rightsquigarrow A_2$ be an \Ainf -morphism between two \Ainf -algebras $A_1$ and $A_2$ and $G : C_2 \rightsquigarrow C_1$ be an \Ainf -morphism between two \Ainf -coalgebras $C_2$ and $C_1$. A diagonal on the operad \Minf\ yields  an \Ainf -morphism between the convolution \Ainf -algebras $\Hom (C_1,A_1)$ and $\Hom (C_2,A_2)$. 
\end{enumerate}
\end{proposition}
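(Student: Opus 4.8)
The plan is to obtain both structures as pullbacks of a single \emph{convolution} morphism along the given diagonals, mirroring the operadic description of the classical convolution algebra recalled in \cref{ss:conv-ainf-alg}.

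For Point~(1), the first step is to construct a morphism of dg operads
\[ \Phi_{C,A} \colon \coEnd_C \otimes \End_A \longrightarrow \End_{\Hom(C,A)} \]
out of the Hadamard product, sending a pair $(c,m) \in \coEnd_C(n) \otimes \End_A(n)$ to the operation
\[ (f_1, \ldots, f_n) \longmapsto m \circ (f_1 \otimes \cdots \otimes f_n) \circ c \colon C \longrightarrow C^{\otimes n} \longrightarrow A^{\otimes n} \longrightarrow A \ . \]
Here $\coEnd_C$ carries its natural operad structure $\gamma(c; c_1, \ldots, c_n) = (c_1 \otimes \cdots \otimes c_n) \circ c$ coming from the tensor product of the outputs. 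I would then check that $\Phi_{C,A}$ intertwines the partial compositions: the target endomorphisms in $\End_A$ act on the single output of the result while the source co-endomorphisms in $\coEnd_C$ expand the single input, and a direct diagram chase shows that the two interlock to give exactly $\gamma$ in $\End_{\Hom(C,A)}$. Since $\coEnd_C$ and $\End_A$ are dg operads with differentials induced by $\partial_C$ and $\partial_A$, and $\Phi_{C,A}$ is manifestly a chain map, the convolution $\Ainf$-algebra structure is then defined as the composite
\[ \Ainf \xrightarrow{\ \triangle\ } \Ainf \otimes \Ainf \longrightarrow \coEnd_C \otimes \End_A \xrightarrow{\ \Phi_{C,A}\ } \End_{\Hom(C,A)} \ , \]
the middle arrow being the Hadamard product of the structure maps of the $\Ainf$-coalgebra $C$ and the $\Ainf$-algebra $A$. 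As all three arrows are morphisms of dg operads, the induced differential on $\Hom(C,A)$ is the commutator $[\partial, \cdot]$, and the normalisation $\triangle(\arbreopdeux) = \arbreopdeux \otimes \arbreopdeux$ forces $m_2$ to be the convolution product, so that one recovers the convolution dg algebra of \cref{ss:conv-ainf-alg} when $C$ and $A$ are strict.

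For Point~(2), the second step is the bimodule analogue: a convolution morphism of operadic bimodules
\[ \Psi \colon \coHom^{C_2}_{C_1} \otimes \Hom^{A_1}_{A_2} \longrightarrow \Hom^{\Hom(C_1,A_1)}_{\Hom(C_2,A_2)} \ , \quad (g,h) \longmapsto \bigl( (f_1, \ldots, f_n) \mapsto h \circ (f_1 \otimes \cdots \otimes f_n) \circ g \bigr) \ , \]
where $g \colon C_2 \to C_1^{\otimes n}$ and $h \colon A_1^{\otimes n} \to A_2$, so that the composite runs $C_2 \to C_1^{\otimes n} \to A_1^{\otimes n} \to A_2$. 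Restricting the target, which is a bimodule over $(\End_{\Hom(C_1,A_1)}, \End_{\Hom(C_2,A_2)})$, along the two operad morphisms $\Phi_{C_1,A_1}$ and $\Phi_{C_2,A_2}$ of the first step turns it into a bimodule over $(\coEnd_{C_1} \otimes \End_{A_1}, \coEnd_{C_2} \otimes \End_{A_2})$, and I would verify that $\Psi$ is a morphism of bimodules over these base operads. The desired $\Ainf$-morphism between the convolution $\Ainf$-algebras is then the composite
\[ \Minf \xrightarrow{\ \triangle\ } \Minf \otimes \Minf \xrightarrow{\ G \otimes F\ } \coHom^{C_2}_{C_1} \otimes \Hom^{A_1}_{A_2} \xrightarrow{\ \Psi\ } \Hom^{\Hom(C_1,A_1)}_{\Hom(C_2,A_2)} \ . \]
Because $\triangle \colon \Minf \to \Minf \otimes \Minf$ is compatible with the diagonal on \Ainf\ used in Point~(1), the $\Ainf$-algebra structures induced on the source and target of this composite are precisely the convolution structures of Point~(1); hence the composite is a morphism of $(\Ainf,\Ainf)$-operadic bimodules $\Minf \to \Hom^{\Hom(C_1,A_1)}_{\Hom(C_2,A_2)}$, which is exactly an $\Ainf$-morphism $\Hom(C_1,A_1) \rightsquigarrow \Hom(C_2,A_2)$.

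The bulk of the work, and the main obstacle, is checking that $\Phi$ and $\Psi$ are morphisms of dg operads and dg operadic bimodules respectively, keeping track of the Koszul signs produced by the flips inside $f_1 \otimes \cdots \otimes f_n$ and by the identifications of tensor products of $\Hom$-spaces with $\Hom$-spaces of tensor products. For $\Psi$ one must furthermore confirm that pulling back along $\Phi_{C_1,A_1}$ and $\Phi_{C_2,A_2}$ makes the left and right actions line up with the convolution $\Ainf$-algebra structures of Point~(1), so that the two points fit together coherently. I expect the cleanest bookkeeping to come from the $2$-colored viewpoint of \cref{ss:homotopy-properties}: the pair of convolution maps assembles into a single morphism of $2$-colored operads $\coEnd(C_2 ; C_1) \otimes \End(A_1 ; A_2) \to \End(\Hom(C_1,A_1) ; \Hom(C_2,A_2))$, and both points then follow simultaneously by precomposing it with the diagonal $\Ainfdeux \to \Ainfdeux \otimes \Ainfdeux$ that $\triangle$ determines on \Ainf\ and on \Minf .
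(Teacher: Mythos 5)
Your proposal follows exactly the same route as the paper: both points are obtained by precomposing the convolution morphisms $\coEnd_C \otimes \End_A \to \End_{\Hom(C,A)}$ and $\coHom^{C_2}_{C_1} \otimes \Hom^{A_1}_{A_2} \to \Hom^{\Hom(C_1,A_1)}_{\Hom(C_2,A_2)}$ with the respective diagonals, which is precisely the paper's (very terse) argument. You simply spell out the formula and the sign checks that the paper dismisses as ``straightforward to define,'' so the proposal is correct and essentially identical in approach.
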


\begin{proof} $ $
\begin{enumerate}[leftmargin=*]
\item Given a diagonal $\Ainf \rightarrow \Ainf \otimes \Ainf$, the following composite of morphism of operads defines the \Ainf -algebra structure on $\Hom(C,A)$ : 
\[ \Ainf \to \Ainf \otimes \Ainf \to \coEnd_C\otimes \End_A \to \End_{\Hom(C,A)} \ , \]
where the morphism of dg operads $\coEnd_C \otimes \End_A \to \End_{\Hom(C,A)}$ is straightforward to define.
\item Given a diagonal $\Minf \rightarrow \Minf \otimes \Minf$, we consider in a similar fashion the composite of morphism of operadic bimodules
\[ \Minf \to \Minf \otimes \Minf \to \coHom^{C_2}_{C_1} \otimes \Hom^{A_1}_{A_2} \to \Hom^{\Hom(C_1,A_1)}_{\Hom(C_2, A_2)} \ . \] 
\end{enumerate}
\end{proof}

\begin{proposition}
  \label{coroll:nobifunctor}
For any diagonal on $\Ainf$ and for any diagonal on $\Minf$, the convolution $\Ainf$-algebra $\Hom(C,A)$ does not define a bifunctor $(\infAcog)^{\mathrm{op}} \times \infAalg \rightarrow \infAalg$.
\end{proposition}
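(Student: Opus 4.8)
The plan is to show that bifunctoriality of the convolution construction would force the diagonal $\triangle$ on $\Minf$ to be \emph{strictly} compatible with the composition of $\Ainf$-morphisms, in direct contradiction with \cref{thm:nofunctorial}. Recall from the proof of \cref{prop:convolution-ainf} that, for fixed diagonals on $\Ainf$ and $\Minf$, the convolution $\Ainf$-morphism $\Phi(G,F)$ associated to $G : C_2 \rightsquigarrow C_1$ and $F : A_1 \rightsquigarrow A_2$ is the composite of operadic bimodule morphisms
\[ \Minf \xrightarrow{\ \triangle\ } \Minf \otimes \Minf \xrightarrow{\ G \otimes F\ } \coHom^{C_2}_{C_1} \otimes \Hom^{A_1}_{A_2} \longrightarrow \Hom^{\Hom(C_1,A_1)}_{\Hom(C_2,A_2)} \ , \]
the last arrow being the evident pairing. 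Since the composition of $\Ainf$-morphisms, on both the algebra and the coalgebra sides, is governed by the composition morphism $\comp : \Minf \to \Minf \circ_{\Ainf} \Minf$ of \cref{sss:operad-ainf-operadic-bimod-minf}, the putative functoriality identity
\[ \Phi(G \circ G', F' \circ F) = \Phi(G',F') \circ \Phi(G,F) \]
(where $G \circ G' : C_3 \rightsquigarrow C_1$ is the composite read in $\infAcog$, matching the composition in $\infAcog^{\mathrm{op}}$) should reduce, after substitution, to a purely operadic statement about $\triangle$ and $\comp$.

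First I would carry out this unwinding, tracking the variance on the coalgebra factor and expanding each of the four composed $\Ainf$-morphisms through $\comp$. The left-hand side then factors through $(\comp \otimes \comp)\triangle$ and the right-hand side through $(\triangle \circ_{\Ainf} \triangle)\comp$, each postcomposed with the same evaluation map built from $G,G',F,F'$ and the pairings, landing in $\Hom^{\Hom(C_1,A_1)}_{\Hom(C_3,A_3)}$. At this stage I would also check that the identity axiom of a bifunctor \emph{does} hold, using only the normalization $\triangle(\purple{\bullet}) = \purple{\bullet} \otimes \purple{\bullet}$; this localizes the entire obstruction in the composition axiom.

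The hard part will be the faithfulness step: upgrading the functoriality identity, which a priori only holds inside the various endomorphism bimodules for specific inputs, to the universal identity $(\comp \otimes \comp)\triangle = (\triangle \circ_{\Ainf} \triangle)\comp$ in $\Minf$ itself. To this end I would specialize to sufficiently large universal inputs — free $\Ainf$-algebras $A_i$, cofree $\Ainf$-coalgebras $C_i$, and generic $\Ainf$-(co)morphisms between them — for which the representations $\Minf \to \Hom^{A_1}_{A_2}$ and $\Minf \to \coHom^{C_2}_{C_1}$ and the pairing are injective in each arity. By the joint faithfulness of the endomorphism bimodules over the quasi-free $\Minf$, the evaluation map on $\Minf \circ_{\Ainf} \Minf$ is then injective in the relevant arities, so that equality of the two evaluated composites forces the operadic identity.

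Finally, this operadic identity is exactly the strict compatibility of the diagonal with composition ruled out by \cref{thm:nofunctorial}, giving the desired contradiction for every choice of diagonals on $\Ainf$ and $\Minf$. I would close by recording that the obstruction is nevertheless a boundary, as witnessed by the explicit computation
\[ \left( \comp \circ \diagminf - (\diagminf \circ_{\Ainf} \diagminf) \circ \comp \right)\left( \arbreopdeuxmorph \right) = \partial\left( \arbreopcompun \otimes \arbreopcompdeux \right) \]
following \cref{thm:nofunctorial}, in accordance with \cref{th:homotopy-properties}: the convolution does assemble into a bifunctor \emph{up to homotopy}.
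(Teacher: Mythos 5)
Your proposal is correct and follows the same route as the paper, which disposes of this proposition in a single line as a direct corollary of \cref{thm:nofunctorial}: bifunctoriality of $\Hom(-,-)$ would force the strict compatibility $(\comp \otimes \comp)\triangle = (\triangle \circ_{\Ainf} \triangle)\comp$, which that theorem rules out for every diagonal on $\Minf$. The faithfulness step you flag as the hard part is left entirely implicit in the paper's one-line proof, so your elaboration via universal inputs only adds detail to, and does not diverge from, the published argument.
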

\begin{proof}
  This is a direct corollary to \cref{thm:nofunctorial}.
\end{proof}

\cref{prop:convolution-ainf} implies in particular that for an \Ainf -coalgebra $C$ and an \Ainf -algebra $A$, it is still possible to define the notion of a \textit{twisting morphism} $\alpha : C \rightarrow A$ as a Maurer-Cartan element in the \Ainf -algebra $\Hom (C,A)$, see \cite[Equation (4.1), p.72]{dotsenko2018twisting} for instance.
It also implies that the \Ainf -morphism $\Hom (C_1,A_1) \rightsquigarrow \Hom (C_2,A_2)$ defined by the \Ainf -morphism $F : A_1 \rightsquigarrow A_2$ and $G : C_2 \rightsquigarrow C_1$, sends a twisting morphism $C_1 \rightarrow A_1$ to a twisting morphism $C_2 \rightarrow A_2$.
We will use this key property in order to pursue the work of Brown \cite{Brown59} and \cite{Proute86} on the homology of fibered spaces in a forthcoming paper.

\subsubsection{Diagonals as twisting morphisms}
\label{sec:RNW}

The results of \cref{sss:conv-ainf-alg} can be interpreted in a more general framework, developed by D. Robert-Nicoud and F. Wierstra in \cite{RobertNicoudWierstraI,RobertNicoudWierstraII}. 

\begin{proposition}
  \label{coroll:twisting}
  The datum of a diagonal on $\Ainf$ is equivalent to the datum of a twisting morphism $\alpha \in \mathrm{Tw}(B As,\Omega As^{\text{!`}})$ sending $\premiertermecobarbarA$ to $\premiertermecobarbarA$. 
\end{proposition}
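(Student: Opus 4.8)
The plan is to extract the statement directly from the proof of \cref{prop:retract}, whose chain of isomorphisms already passes through the term $\mathrm{Tw}(B As, \Omega As^{\text{!`}})$. Recall that \cref{prop:retract} establishes a bijection between diagonals on $\Ainf$ and retractions $r : \AAinf \to \Ainf$, the latter being morphisms of dg operads satisfying $r(\premiertermecobarbarA) = \premiertermecobarbarA$. I would obtain the present corollary by composing this bijection with the last bar--cobar adjunction of that chain,
\[ \Hom_{\mathsf{Op}}(\Omega B As, \Omega As^{\text{!`}}) \cong \mathrm{Tw}(B As, \Omega As^{\text{!`}}) \ , \]
which identifies morphisms of dg operads $\AAinf = \Omega B As \to \Ainf$ with twisting morphisms $B As \to \Omega As^{\text{!`}}$. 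Equivalently, one may simply stop the chain of \cref{prop:retract} at its middle term, reading off the composite $\Hom_{\mathsf{Op}}(\Omega As^{\text{!`}}, \Omega As^{\text{!`}} \otimes \Omega As^{\text{!`}}) \cong \mathrm{Tw}(B As, \Omega As^{\text{!`}})$.

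The one point that needs care is the translation of the normalization condition across this adjunction. First I would recall that the bar--cobar adjunction sends an operad morphism $r$ to its restriction to the space of generators $s^{-1}\overline{B As}$ of $\AAinf$, while conversely a twisting morphism $\alpha$ is extended freely to the morphism of dg operads it determines. Since the arity-$2$ generator $\premiertermecobarbarA$ of $\AAinf$ is the cobar image of the corresponding arity-$2$ generator of $B As$, the values of $r$ and of $\alpha$ on $\premiertermecobarbarA$ coincide. Hence the retraction condition $r(\premiertermecobarbarA) = \premiertermecobarbarA$ holds if and only if $\alpha(\premiertermecobarbarA) = \premiertermecobarbarA$, which is exactly the normalization appearing in the statement.

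Combining the two equivalences then yields the result: normalized diagonals on $\Ainf$ correspond bijectively to twisting morphisms $\alpha \in \mathrm{Tw}(B As, \Omega As^{\text{!`}})$ sending $\premiertermecobarbarA$ to $\premiertermecobarbarA$. I expect the main (indeed only) obstacle to be bookkeeping rather than conceptual: one must check that the self-duality isomorphism $\mathcal{S}^{-1}(\Omega As^{\text{!`}} \otimes \Omega As^{\text{!`}}) \cong \Hom(B As, \Omega As^{\text{!`}})$ used in \cref{prop:retract}, together with the two bar--cobar adjunctions, carries the generator-level normalization $\triangle(\premiertermecobarbarA) = \premiertermecobarbarA \otimes \premiertermecobarbarA$ precisely to $\alpha(\premiertermecobarbarA) = \premiertermecobarbarA$. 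This is the same verification already performed in the final line of the proof of \cref{prop:retract}, so no new computation is required.
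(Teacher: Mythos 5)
Your proposal is correct and follows essentially the same route as the paper, which simply observes that the statement is already contained in the chain of isomorphisms $\Hom_{\mathsf{Op}}(\Omega As^{\text{!`}}, \Omega As^{\text{!`}} \otimes \Omega As^{\text{!`}}) \cong \mathrm{Tw}(B As, \Omega As^{\text{!`}}) \cong \Hom_{\mathsf{Op}}(\Omega B As, \Omega As^{\text{!`}})$ established in the proof of \cref{prop:retract}, together with the final check that the normalization conditions correspond. Your additional remarks on how the bar--cobar adjunction carries the generator-level normalization across are accurate but amount to the same verification the paper already performs there.
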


\begin{proof}
This result was proven in the proof of \cref{prop:retract}.
\end{proof}

Setting $\mathcal{C}=B As$ and $\mathcal{P}=\Omega As^{\text{!`}}$ and working in the context of non-symmetric operads where the operad $\Linf$ of \cite{RobertNicoudWierstraI,RobertNicoudWierstraII} is replaced by the operad $\Ainf$, we recover \cref{coroll:twisting} (and thus \cref{prop:retract}) via \cite[Theorem 7.1]{RobertNicoudWierstraI} and Point~(1) of \cref{prop:convolution-ainf} via \cite[Theorem 4.1]{RobertNicoudWierstraI}.
We denote by $\Aalg$ the category of $\Ainf$-algebras and their \emph{strict} morphisms \cite[Section 10.2.1]{LodayVallette12}.
It is shown in \cite[Corollary 5.4]{RobertNicoudWierstraI} that the assignments
\begin{eqnarray}
  \Hom(-,\id) &:& (\infAcog)^{\mathrm{op}} \times \Aalg \to \Aalg \label{eq:bif1} \\
  \Hom(\id, -) &:& (\Acog)^{\mathrm{op}} \times \infAalg \to \Aalg \label{eq:bif2}
\end{eqnarray}
given by the convolution $\Ainf$-algebra extend to bifunctors. 
The authors also show that these two bifunctors do \emph{not} extend in general to a bifunctor 
\begin{eqnarray}
  \Hom(-,-) &:& \mathsf{(\infAcog)^{op}} \times \infAalg \to \infAalg \label{eq:bifunctor}
\end{eqnarray}
as this assignment is not compatible with the composition of $\Ainf$-morphisms \cite[Theorem 6.6]{RobertNicoudWierstraI}.
Point~(2) of \cref{prop:convolution-ainf} allows us to define the assignment (\ref{eq:bifunctor}) directly, and \cref{coroll:nobifunctor} can be seen as a stronger version of \cite[Theorem 6.6]{RobertNicoudWierstraI}, in the special case of $\Ainf$-algebras. 

The main result of \cite{RobertNicoudWierstraII} says that if a twisting morphism $\alpha \in \mathrm{Tw}(B As,\Omega As^{\text{!`}})$ is Koszul, then the possible compositions of the two bifunctors (\ref{eq:bif1}) and (\ref{eq:bif2}) are homotopic and that they extend to a bifunctor on the level of the homotopy categories \cite[Theorem 3.6 and Corollary 3.8]{RobertNicoudWierstraII}. 
This should be seen as a statement analogous to Point (3) of \cref{th:homotopy-properties}. It would be interesting to know how the results of \cite{RobertNicoudWierstraI,RobertNicoudWierstraII} can be interpreted from the viewpoint of diagonals, and if they admit an interpretation on the level of polytopes.

\subsection{Diagonals in symplectic topology} \label{ss:diag-symp}

\subsubsection{The work of Lipshitz, {Oszv\'ath} and Thurston}

In \cite{LOT20}, R. Lipshitz, P. Oszv\'ath and D. Thurston also study diagonals on the dg operad \Ainf\ and on the dg operadic bimodule \Minf . They however work exclusively on the dg level, constructing abstract diagonals by using the fact that \Ainf\ and \Minf\ are contractible, and do not provide explicit formulae for these diagonals as in \cref{prop:diagonal-polytopale-a-infini} and \cref{prop:diagonal-polytopale-m-infini}. The goal of their work is to study bordered Heegaard Floer homology of 3-manifolds.
Given a 3-manifold $Y$ with two boundary components, they aim to construct a \emph{bimodule twisted complex} $CFDD^-(Y)$, also called a \emph{type $DD$-bimodule}. The definition of such an object uses a diagonal on the dg operad \Ainf . A diagonal on \Minf\ is then needed in order to relate the categories of bimodules defined with different diagonals on \Ainf , which in turn is needed for properties like the associativity of tensor products. They also expect that diagonals on \Minf\ could be needed in a distant future to define \Ainf -morphisms between bimodule twisted complexes arising from a cobordism between 3-manifolds $Y_1$ and $Y_2$.
Thus, the explicit formula for the diagonal defined in this paper could be used to compute invariants of 3 and 4-manifolds, via implementation in a computer program for instance.

\subsubsection{K\"unneth theorems in Lagrangian Floer theory}
\label{sss:amorim-fukaya}

Let $(M,\omega)$ be a closed symplectic manifold, i.e. a closed manifold $M$ together with a closed non-degenerate 2-form $\omega$ on $M$. 
The \emph{Fukaya category} $\mathrm{Fuk}(M,\omega)$ of $(M,\omega)$ is defined to be the (curved filtered unital) \Ainf -category whose objects are (unobstructed) Lagrangian submanifolds of $M$ and higher compositions are defined by counting pseudo-holomorphic disks with Lagrangian boundary conditions and marked points on their boundary, as represented in \cref{fig:pseudo-hol-disk-bord-lagrang}. 
We refer for instance to~\cite{smith-prolegomenon}~and~\cite{auroux-fukaya} for introductions to this subject.
Given a closed spin Lagrangian submanifold $L \subset M$, K. Fukaya also constructs in \cite{fukaya-cyclic-symmetry} a strictly unital \Ainf -algebra $\mathcal{F}(L)$, the \emph{Fukaya algebra} of the Lagrangian $L$, whose higher multiplications are again defined by counting pseudo-holomorphic disks. 

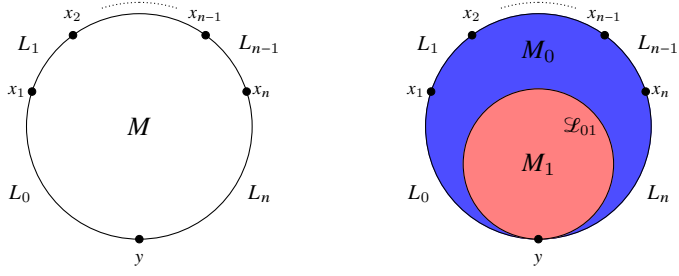
\begin{figure}[h]
\centering

\begin{subfigure}{0.4\textwidth}
\centering

\begin{tikzpicture}[scale = 0.5]

\draw (0,0) circle (3) ; 

\draw (360/20 : 3) node[scale = 0.1]{\pointbullet};
\draw (3*360/20 : 3) node[scale = 0.1]{\pointbullet};
\draw (9*360/20 : 3) node[scale = 0.1]{\pointbullet};
\draw (7*360/20 : 3) node[scale = 0.1]{\pointbullet};
\draw (270 : 3) node[scale = 0.1]{\pointbullet};

\draw (360/20 : 3) node[right,scale=0.7]{$x_n$};
\draw (3*360/20 : 3) node[above=3pt,scale=0.7]{$x_{n-1}$};
\draw (9*360/20 : 3) node[left,scale=0.7]{$x_1$};
\draw (7*360/20 : 3) node[above=3pt,scale=0.7]{$x_2$};
\draw (270 : 3) node[below=3pt,scale=0.7]{$y$};

\draw (2*360/20 : 3) node[above right,scale=0.8]{$L_{n-1}$};
\draw (8*360/20 : 3) node[above left,scale=0.8]{$L_1$};
\draw (11.5*360/20 : 3) node[below left,scale=0.8]{$L_0$};
\draw (18.5*360/20 : 3) node[below right,scale=0.8]{$L_n$};

\draw[densely dotted] (0,3.3) arc (90:108:3.3) ;
\draw[densely dotted] (0,3.3) arc (90:72:3.3) ;

\node at (0,0) {$M$} ;

\end{tikzpicture}

\end{subfigure}
\begin{subfigure}{0.4\textwidth}
\centering

\begin{tikzpicture}[scale = 0.5]

\draw[fill,blue!70] (0,0) circle (3) ;
\draw (0,0) circle (3) ;

\draw (360/20 : 3) node[scale = 0.1]{\pointbullet};
\draw (3*360/20 : 3) node[scale = 0.1]{\pointbullet};
\draw (9*360/20 : 3) node[scale = 0.1]{\pointbullet};
\draw (7*360/20 : 3) node[scale = 0.1]{\pointbullet};

\draw (360/20 : 3) node[right,scale=0.7]{$x_n$};
\draw (3*360/20 : 3) node[above=3pt,scale=0.7]{$x_{n-1}$};
\draw (9*360/20 : 3) node[left,scale=0.7]{$x_1$};
\draw (7*360/20 : 3) node[above=3pt,scale=0.7]{$x_2$};
\draw (270 : 3) node[below=3pt,scale=0.7]{$y$};

\draw (2*360/20 : 3) node[above right,scale=0.8]{$L_{n-1}$};
\draw (8*360/20 : 3) node[above left,scale=0.8]{$L_1$};
\draw (11.5*360/20 : 3) node[below left,scale=0.8]{$L_0$};
\draw (18.5*360/20 : 3) node[below right,scale=0.8]{$L_n$};

\draw[densely dotted] (0,3.3) arc (90:108:3.3) ;
\draw[densely dotted] (0,3.3) arc (90:72:3.3) ;

\node at (0,2) {$M_0$} ;

\draw[fill,red!50] (0,-1) circle (2) ;
\draw (0,-1) circle (2) ;

\node at (0,-1) {$M_1$} ;

\draw (30 : 2) + (0,-1) node[xshift=-0.3cm,scale=0.8]{$\mathcal{L}_{01}$};

\draw (270 : 3) node[scale = 0.1]{\pointbullet};

\end{tikzpicture}

\end{subfigure}

\caption{On the left, a pseudo-holomorphic disk defining the \Ainf -category structure on $\mathrm{Fuk}(M)$. On the right, a pseudo-holomorphic quilted disk defining an \Ainf -functor $\mathrm{Fuk}(M_0)\rightsquigarrow\mathrm{Fuk}(M_1)$} \label{fig:pseudo-hol-disk-bord-lagrang}
\end{figure}

In \cite{amorim-lagrangian}, L. Amorim shows that given two symplectic manifolds $M_1$ and $M_2$ together with Lagrangians $L_i \subset M_i$, the Fukaya algebra of the product Lagrangian $L_1 \times L_2$ is quasi-isomorphic to the tensor product of their Fukaya algebras, i.e. $\mathcal{F}(L_1 \times L_2) \simeq \mathcal{F}(L_1) \otimes \mathcal{F}(L_2)$. His proof relies on a theorem that he proves in~\cite{amorim-tensor}, giving a criterion for an \Ainf -algebra $C$ to be quasi-isomorphic to the tensor \Ainf -algebra $A \otimes B$ (see \cref{def:tensor-product-ainf-alg}) of two commuting \Ainf -subalgebras $A \subset C$ and $B \subset C$, which he then applies to the two \Ainf -subalgebras $\mathcal{F}(L_1) \subset \mathcal{F}(L_1 \times L_2)$ and $\mathcal{F}(L_2) \subset \mathcal{F}(L_1 \times L_2)$.
Fukaya generalizes this result in \cite{fukaya-unobstructed}, working this time on the level of Fukaya categories. He proves that for two closed symplectic manifolds $M_0$ and $M_1$ there exists a unital \Ainf -functor
\[ \mathrm{Fuk}(M_0) \otimes \mathrm{Fuk}(M_1) \longrightarrow \mathrm{Fuk}(M_0^- \times M_1) \]
which is a homotopy equivalence to its image. 

Let now $M_0$ and $M_1$ be two compact symplectic manifolds. Define a \emph{Lagrangian correspondence} from $M_0$ to $M_1$ to be a Lagrangian submanifold $\mathcal{L} \subset M_0^{-} \times M_1$.
In \cite{mau-wehrheim-woodward}, S. Mau, K. Wehrheim and C. Woodward associate to a Lagrangian correspondence $\mathcal{L}$ (with additional technical assumptions) an \Ainf -functor $\Phi_{\mathcal{L}} : \mathrm{Fuk}(M_0) \rightsquigarrow \mathrm{Fuk}(M_1)$.  
It is defined on objects as 
\[ \Phi_{\mathcal{L}} (L_0) := \pi_{M_1} ( L_0 \times_{M_0} \mathcal{L} ) \ , \]
where $\pi_{M_1}$ denotes the projection $M_0 \times M_0^{-} \times M_1 \rightarrow M_1$ and $\times_{M_0}$ is the fiber product over $M_0$. The operations of $\Phi_{\mathcal{L}}$ are defined by counting pseudo-holomorphic quilted disks with Lagrangian boundary conditions, seam condition on $\mathcal{L}$ and marked points on their boundary, as represented in \cref{fig:pseudo-hol-disk-bord-lagrang}. 
The tensor product of $\Ainf$-functors defined in the present paper allows one to consider the $\Ainf$-functor $\Phi_{\mathcal{L}_M} \otimes \Phi_{\mathcal{L}_N}$ associated to a pair of Lagrangian correspondences, raising the following question. 

\begin{samepage}
\begin{problem} \label{problem}
Does the diagram
\begin{center} 
\begin{tikzcd}[column sep = 12ex]
\mathrm{Fuk}(M_0) \otimes \mathrm{Fuk}(N_0) \arrow[d,squiggly] \arrow[r,"\Phi_{\mathcal{L}_M} \otimes \Phi_{\mathcal{L}_N}",squiggly] & \mathrm{Fuk}(M_1) \otimes \mathrm{Fuk}(N_1) \arrow[d,squiggly] \\
 \mathrm{Fuk}(M_0 \times N_0) \arrow[r,below,"\Phi_{\tau ( \mathcal{L}_M \times \mathcal{L}_N)}",squiggly] & \mathrm{Fuk}(M_1 \times N_1)
\end{tikzcd}
\end{center}
commute up to homotopy of \Ainf -functors? 
\end{problem}
\end{samepage}

In this diagram, $\mathcal{L}_M \subset M_0^{-} \times M_1$, $\mathcal{L}_N \subset N_0^- \times N_1$ and the symplectomorphism $\tau$ is defined by rearranging the factors of $M_0^{-} \times M_1 \times N_0^- \times N_1$ into the factors of $M_0^{-} \times N_0^- \times M_1 \times N_1$. In other words, we would like to know whether the \emph{algebraic (tensor) product} of geometric \Ainf -functors between Fukaya categories defined in this paper is homotopic to the \Ainf -functor defined by the \emph{geometric product} of the Lagrangian correspondences. 
We refer to \cite[Section 13]{fukaya-unobstructed} for a discussion on two definitions of the notion of a homotopy between \Ainf -functors.

\newpage

\bibliographystyle{amsalpha}
\bibliography{bib}





\end{document}